\newcommand\onto{\!\rightarrow\!}
\newcommand{\CC}{A}
\newcommand{\const}{\mathscr{C}}
\newcommand{\diam}{\mathrm{diam}}
\newcommand\reals{\mathbb R}
\newcommand\IR{\mathbb R}
\newcommand\manifold{\Gamma}
\newcommand\neumann{\mathcal{N}}
\newcommand\dirichlet{\mathcal{D}}
\newcommand\domain\Omega
\newcommand\slip{b}
\newcommand\jump[1]{[\hspace{-1pt}[\hspace{1pt}#1\hspace{1pt}]\hspace{-1pt}]}
\newcommand\mean[1]{\{#1\}}
\newcommand\divstress{\mathrm{div}\,\sigma}
\newcommand\grad{\nabla}
\newcommand{\ucA}{\underline{c}{}_A}
\newcommand{\bcA}{\overline{c}{}_A}
\newcommand{\vctr}[1]{\boldsymbol{#1}}
\newcommand{\HH}{\vctr{H}}
\newcommand{\TT}{\vctr{T}}
\newcommand{\VV}{\vctr{V}}
\newcommand{\LL}{\vctr{L}}
\newcommand{\vp}{\varphi}
\newcommand{\zvp}{z_{\vp}}
\newcommand{\supp}{\mathrm{supp}}
\newcommand{\meas}{\mathrm{meas}}
\newcommand{\ab}{a_{\manifold}}
\newcommand{\FIG}[1]{\ref{fig:#1}}
\newcommand{\lift}[1]{\ell_{#1}}
\newcommand{\EQ}[1]{(\ref{eqn:#1})}
\newcommand{\subdivstress}{\mathrm{div}\sigma}
\DeclareMathOperator*{\arginf}{arg\:inf}
\DeclareMathOperator*{\argmin}{arg\:min}
\newtheorem{theorem}{Theorem}
\newtheorem{lemma}[theorem]{Lemma}
\newtheorem{corollary}[theorem]{Corollary}
\newtheorem{conjecture}{Conjecture}
\title{Discontinuities without discontinuity:\\The Weakly-enforced Slip Method}
\author{%
  G.J.\ van Zwieten$^{1,3}$\footnote{email: g.j.v.zwieten@tue.nl},
  E.H.\ van Brummelen$^{1,2}$,
  K.G.\ van der Zee${}^{1}$, \\
  M.A.\ Guti\'errez${}^{3}$,
  R.F.\ Hanssen${}^{4}$ \\
  \\
  \normalsize
  \begin{tabular}{cl}
  $^1$ & Eindhoven University of Technology, \\
       & Department of Mechanical Engineering, \\
       & P.O. Box 513, 5600 MB Eindhoven, The Netherlands \\[1ex]
  $^2$ & Eindhoven University of Technology, \\
       & Department of Mathematics \& Computer Science, \\
       & P.O. Box 513, 5600 MB Eindhoven, The Netherlands \\[1ex]
  $^3$ & Delft University of Technology, \\
       & Department of Mechanical, Maritime and Materials Engineering, \\
       & P.O. Box 5058, 2600 GB Delft, The Netherlands \\[1ex]
  $^4$ & Delft University of Technology, \\
       & Department of Civil Engineering and Geosciences, \\
       & P.O. Box 5048, 2600 GA Delft, The Netherlands
  \end{tabular}}
\begin{document}

\maketitle

\begin{abstract}
  Tectonic faults are commonly modelled as Volterra or Somigliana dislocations
  in an elastic medium. Various solution methods exist for this problem.
  However, the methods used in practice are often limiting, motivated by
  reasons of computational efficiency rather than geophysical accuracy. A
  typical geophysical application involves inverse problems for which many
  different fault configurations need to be examined, each adding to the
  computational load. In practice, this precludes conventional finite-element
  methods, which suffer a large computational overhead on account of geometric
  changes. This paper presents a new non-conforming finite-element method based
  on weak imposition of the displacement discontinuity. The weak imposition of
  the discontinuity enables the application of approximation spaces that are
  independent of the dislocation geometry, thus enabling optimal reuse of
  computational components. Such reuse of computational components renders
  finite-element modeling a viable option for inverse problems in geophysical
  applications. A detailed analysis of the approximation properties of the new
  formulation is provided. The analysis is supported by numerical experiments
  in~2D and~3D. \\

  \noindent\textbf{Keywords:} Volterra dislocation, Finite Element Method, weak
  imposition, linear elasticity, tectonophysics.
\end{abstract}

\section{Introduction}

The world is perpetually reminded of the fact that seismic hazard is still
beyond reach of prediction --- as it was most recently by the disaster that
struck Japan. The difficulty is not just to predict the exact moment of
failure, which, as argued by some~\cite{geller97}, might never reach a level of
practicality. It is also the nature of the risk, and the extent to which stress
is accumulating, that turns out to be surprisingly difficult to constrain. The
2011 Tohoku-Oki earthquake demonstrated a great lack of understanding of
ongoing tectonics \cite{kerr11}. Arguably, a better understanding could have
reduced the secondary effects if such information had led to more apt measures
and regulations.

The main reason for this poor state of information can be
traced to the absence of direct measurements. The primary quantities of
interest, being the magnitude and orientation of the stress tensor in the
earth's crust, can be obtained only through tedious, expensive, point-wise
measurements. A viable broad scale method to directly measure the global stress
field does not exist. For this reason information is obtained mostly from
secondary observables, earthquakes themselves being an important source.
Earthquakes represent significant, near instantaneous changes in the global
stress field. By accurately determining the location of the segment of the
fault that collapsed, one can progressively update the stress field and evolve
it in time. This way the tectonic evolution is monitored, and hazardous areas
can be identified as regions where stress accumulates. For successful tracking
of stress development, however, it is essential to understand the tectonic
mechanism behind every earthquake. This includes the location and geometry of
the section of the fault that collapsed, and the direction and magnitude of
fault slip. It is increasingly popular to base such analyses on local
co-seismic surface displacements. This type of information has become available
since the nineties with the advent of space borne interferometric SAR
measurements of the earth's surface, and with the widespread availability of
GPS measurements \cite{tronin06}. Analysis of this data has in recent years
seen rapid adoption and is now routinely performed for all major earthquakes.

A mechanical model is required to connect observations to physics. Most
commonly (if not exclusively) used is an elastic dislocation model, based on
the assumption that on short time scales, nonlinear (plastic) effects are
negligible. The model embeds a displacement discontinuity of given location and
magnitude in an elastic me\-di\-um, causing the entire medium to deform under
the locked-in stress. Many different solution methods have been developed for
this particular problem, based on analytical solutions or numerical
approximations; see for instance \cite{gertjan13} for
an overview of the most prominent methods. However, methods founded on
analytical solutions generally dictate severe model simplifications, such as
elastic homogeneity or generic geometries, which restricts their validity. The
computational complexity of methods based on numerical approximation, on the
other hand, is typically prohibitive in practical applications. Because in
practice the surface displacements are given, and the dislocation parameters
are the unknowns, the computational setting is always that of an inverse
problem. A typical inversion requires several thousands of evaluations of the
forward model, and therefore computational efficiency is a key requirement.
Moreover, the forward problems in the inversion process are essentially
identical, except for the fault geometry. Reuse of computational components,
such as approximate factors of the system matrix, is imperative for efficiency
of the inversion. Current numerical methods for seismic problems do not offer
such reuse options.

Finite-element methods provide a class of numerical techniques that
are particularly versatile in terms of modeling capabilities in
geophysics. Finite-element methods allow
for elastic heterogeneity, anisotropy, and topography; all things that can
not well be accounted for with currently used analytical and semi-analytical
methods. In geophysical practice, finite-element methods are however often rejected 
for reasons of computational cost. The high computational cost can be
retraced to the condition that the geometry of the fault 
coincides with element edges, which is a requirement engendered by the
strong enforcement of the dislocation; see~\cite{melosh81}.
Consequently, the mesh geometry depends on the fault, which in turn
implies that mesh-dependent components such as the stiffness matrix and
approximate factorizations of that matrix cannot be reused for
different fault geometries and must be recomputed whenever the
geometry of the fault changes. The recomputation of these components
in each step of a nonlinear inversion process leads to a prohibitive
overall computational complexity.

To overcome the complications of standard finite-element techniques in
nonlinear inversion processes in tectonophysics, this paper introduces
the \emph{Weakly-enforced Slip Method\/} (WSM), a new numerical method in 
which displacement discontinuities are weakly imposed. The WSM
formulation is similar to Nitsche's variational principle for
enforcing Dirichlet boundary conditions~\cite{Nitsche:1971fk}.
The weak imposition of the discontinuity in WSM decouples the finite element
mesh from the geometry of the fault, which
renders the stiffness matrix and derived objects such as approximate factors
independent of the fault and enables reuse of these objects. Therefore, even 
though the computational work required for a single realisation of the 
fault geometry is comparable to that of standard FEM, reuse of components
makes WSM significantly more efficient when many different fault
geometries are considered. This makes finite-element computations based
on WSM a viable option for nonlinear inverse problems.

A characteristic feature of WSM is that it employs 
standard continuous finite-element approximation spaces, as opposed to the
conventional FEM split-node approach~\cite{melosh81} which introduces
actual discontinuities in the approximation space. Instead, WSM
approximations feature a `smeared out' jump with sharply localized
gradients. We will establish that the error in the WSM approximation
converges only as $O(h^{1/2})$ in the~$\LL^2$\nobreakdash-norm as the 
mesh width~$h$ tends to zero and that the error diverges 
as~$O(h^{-1/2})$ in the energy norm. In addition, however, we will
show that the WSM approximation displays optimal local convergence
 in the energy norm, i.e., optimal convergence rates are obtained on 
any subdomain excluding a neighborhood of the dislocation. The numerical
experiments convey that WSM also displays optimal local convergence in the
$\LL^2$\nobreakdash-norm.

The remainder of this manuscript is organized as follows.
Section~\ref{sec:formulation} presents strong and weak formulations of Volterra's 
dislocation problem, and derives the corresponding 
lift-based finite-element formulation. Section~\ref{sec:wsm} introduces the
Weakly-enforced Slip Method based on two formal derivations, viz., by collapsing the 
support of the lift onto the fault and by application of Nitsche's variational principle.
In Section~\ref{sec:convergence}, we examine the approximation properties of the
WSM formulation. Section~\ref{sec:results} verifies and illustrates the approximation properties
on the basis of numerical experiments for several~2D and~3D test cases. In addition, to 
illustrate the generality of WSM, in the numerical experiments we consider several test cases 
that violate the conditions underlying the error estimates in Section~\ref{sec:convergence},
such as discontinuous slip distributions and rupturing dislocations. Section~\ref{sec:concl}
presents concluding remarks.

\section{Problem formulation}
\label{sec:formulation}

In this section we define Volterra's dislocation problem. We will postulate the strong formulation in
\ref{sec:strongform}, followed by a derivation of the weak formulation in
\ref{sec:weakform}. The latter will serve as a basis for the derivation of the
Finite Element approximations, for which we lay foundations in
Section~\ref{sec:fem}.

\subsection{The strong form}
\label{sec:strongform}

We start by defining the geometric setup. We consider an open bounded domain $
\domain \subset \reals^N $ ($N=2,3$) with Lipschitz boundary $\partial\domain$.
An $(N\!-\!1)$-dimensional Lipschitz manifold $ \manifold $, referred as the fault,
divides the domain in two disjoint open subdomains $ \domain^+ $ and $
\domain^- $, such that $ \domain = \mathrm{int}(\overline{\domain^+} \cup
\overline{\domain^-})$. We equip $\manifold$ with a unit normal vector $
\nu:\manifold\to\reals^N $ directed into the subdomain $ \domain^- $. The fault
supports a slip distribution~$\slip: \Gamma \rightarrow \mathbb R^N$,
corresponding to a dislocation. The fault is referred to as a non-rupturing
fault if the dislocation $ \varkappa = \supp(\slip)$ is compact in~$\Gamma$,
and a rupturing fault otherwise. Let us note that in tectonics, rupturing
faults correspond to intersections of the slip plane with the surface of the
earth. Figure~\ref{fig:domain} illustrates this setup for $ N = 2 $. It is to
be noted that $ b $ need not be tangential to the fault. If $ b $ has a
non-vanishing normal component then the fault is opening, such as may be caused
by an intruding material.

\begin{figure}
  \centering
  \includegraphics{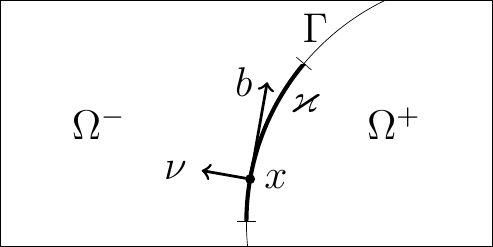}
  \caption{Definition of the domain $ \Omega $, normal vector $ \nu $, slip
    vector $ b $, fault $ \Gamma $ and dislocation $ \varkappa $.}
  \label{fig:domain}
\end{figure}

The displacement field generated by the dislocation is represented by $ u :
\domain \setminus \manifold \onto \reals^N $. For convenience, we restrict
our considerations to linear elasticity.
We denote by the map $u\mapsto\epsilon(u)$ the strain tensor corresponding to
the displacement field~$u$, according~to
\begin{equation} 
  \label{eqn:strain} 
  \epsilon( u ) := \tfrac 1 2 \big[ \nabla u + ( \nabla u )^T \big], 
\end{equation}
under the assumption that $ u $ is differentiable on $ \domain \setminus \manifold $.
The constitutive behavior corresponds to Hooke's law:
\begin{equation} 
  \label{eqn:stressstrain} 
  \sigma( u ) := \CC:\epsilon( u),
\end{equation} 
i.e., $\sigma_{ij}(u)=A_{ijkl}\epsilon_{kl}(u)$, where we adhere to the
convention on summation on repeated indices. The tensors $\sigma$ and $\CC$ are
referred to as the stress tensor and the elasticity tensor, respectively. The
elasticity tensor is subject to the usual symmetries $ \CC_{ijkl} = \CC_{ijlk}
= \CC_{klij} $. Moreover, we assume that it is bounded and satisfies a strong
positivity condition, i.e., there exist positive constants $\bcA>0$
and~$\ucA>0$ such that:
\begin{equation} 
  \label{eqn:Abp}
  \ucA{}\,e_{ij}e_{ij}
  \leq
  \CC_{ijkl}e_{ij}e_{kl}\leq\bcA{}e_{ij}e_{ij}
\end{equation}
for all tensors $e$.
The elasticity tensor is in principle allowed to vary over the 
domain~$\domain$, subject to the above conditions. Auxiliary smoothness conditions on~$\CC$
will be introduced later.

To facilitate the formulation, we denote by $\sigma_n(u):=\sigma(u)\cdot{}n$
the traction on a boundary corresponding to the displacement field~$u$.
Moreover, we introduce the jump operator~$\jump{\cdot}$ and average operator
$\mean{\cdot}$ according to:
\begin{subequations}
  \begin{align}
    \jump{v}& : \manifold \ni x \mapsto v^+( x ) - v^-( x ), \label{eqn:jump} \\
    \mean{v}& : \manifold \ni x \mapsto \tfrac{1}{2}\big[v^+( x ) + v^-( x )\big], \label{eqn:av}
  \end{align}
\end{subequations}
where $v^+$ and $v^-$ represent the traces of $v$ from within~$\domain^+$ and~$\domain^-$, respectively.
Given a partition of the boundary $\partial\Omega$ into $\dirichlet\neq\emptyset$ and $\neumann$
such that $\dirichlet\cap\neumann=\emptyset$, we define the Volterra dislocation problem as follows:

\begin{framed}
  \noindent \textbf{Strong formulation:} given a body force $ f:\Omega\to\reals^N $, 
  a displacement $g:\dirichlet\to\reals^N$ and a traction $h:\neumann\to\reals^N$,
  and slip $ \slip:\manifold\to\reals^N $, find displacement field
  $ u:\domain\setminus\manifold\to\reals^N $ such that
  \begin{subequations}
    \label{eqn:BVP}
    \begin{align}
      -\divstress(u) &= f \text{ in } \domain \setminus
      \manifold \label{eqn:strong_equilibrium} \\
      \jump{u} &= \slip \text{ on } \manifold \label{eqn:strong_dispjump} \\
      \jump{ \sigma_{\nu}(u) } &= 0 \text{ on } \manifold \label{eqn:strong_tracjump} \\
      u &= g \text{ on } \dirichlet \label{eqn:strong_farfield}\\
      \sigma_n( u ) &= h \text{ on } \neumann \label{eqn:strong_surface} 
    \end{align}
  \end{subequations}
\end{framed}
\noindent%
Equation~\eqref{eqn:strong_equilibrium} is the usual equilibrium condition,
which applies everywhere in $\domain$ except on the manifold~$ \manifold$.
Equations~\eqref{eqn:strong_dispjump} and~\eqref{eqn:strong_tracjump}
respectively express that the displacements at the boundaries of $ \domain^+ $
and $ \domain^- $ differ by the slip vector~$ b $, and that the tractions at
the boundaries of $ \domain^+ $ and $ \domain^- $ are in static equilibrium,
i.e., equal and opposite. It is to be remarked that this condition corresponds
to a standard linear approximation in the small-slip limit, as the traction
equilibrium at the fault occurs in fact in the deformed configuration. The
boundary conditions ~\eqref{eqn:strong_farfield} and~\eqref{eqn:strong_surface}
correspond to Dirichlet and Neumann boundary conditions, representing a prescribed displacement 
and a prescribed traction, respectively.
 
To facilitate the presentation, we note that the solution to the Volterra dislocation problem~\eqref{eqn:BVP}
can be separated into a discontinuous part $u_0:\domain\setminus\manifold\to\reals^N$ with homogeneous data
and a continuous part $u_1:\Omega\to\reals^N$ with inhomogeneous data: 
\begin{equation}
\label{eqn:strongform}
\left.
\begin{aligned}
-\divstress(u_0) &= 0\quad\text{in } \domain \setminus \manifold 
\\
u_0&=0\quad\text{on } \dirichlet
\\
\sigma_n( u_0 ) &= 0 \quad\text{on } \neumann
\\
\jump{u_0} &= \slip\quad \text{ on } \manifold 
\\
\jump{ \sigma_{\nu}(u_0) } &= 0\quad\text{ on } \manifold 
\end{aligned}
\right.
\qquad\qquad
\left.
\begin{aligned}
-\divstress(u_1) &= f\quad\text{in } \domain
\\
u_1&=g\quad\text{on } \dirichlet
\\
\sigma_n( u_1 ) &=h \quad\text{on } \neumann
\\
{}&{}\\
{}&{} 
\end{aligned}
\right.
\end{equation}

The sum $u_0+u_1$ satisfies~\eqref{eqn:BVP}. Therefore, the inhomogeneous data $f,g,h$ in~\eqref{eqn:BVP} can be 
treated separately in a standard continuous elasticity problem, and without loss of generality we can restrict 
our consideration to homogeneous data. We retain $f:=0$ to identify the right member of~\eqref{eqn:strong_equilibrium}.

For rupturing faults, some compatibility conditions arise with respect to the
boundary conditions. In particular, an intersection of the dislocation with the
boundary of the domain is only admissible at the Neumann boundary~$\neumann$.
Otherwise, an inadmissible incompatibility between the jump
condition~$\jump{u_0}=b$ and the homogeneous Dirichlet condition~$u_0=0$
ensues.

\subsection{The weak form}
\label{sec:weakform}

To derive the weak formulation of~\eqref{eqn:BVP}, we note that for any piecewise smooth 
function $v$ from $\domain\setminus\manifold$ into $\reals^N$, we have the identities:
\begin{align}
  \label{eqn:varform1}
  \int_{\domain \setminus \manifold} v \cdot \divstress(u)
  &= \oint_{\partial\domain^-}v\cdot\sigma_n(u)
  - \int_{\domain^-} \sigma( u ):\nabla v \nonumber\\
  &\quad + \oint_{\partial\domain^+}v\cdot\sigma_n(u)
  - \int_{\domain^+} \sigma( u ):\nabla v 
  \nonumber\\
  &= \oint_{\partial\Omega}v\cdot\sigma_n(u)
  - \int_{\Omega\setminus\manifold} \sigma( u ):\nabla v \nonumber\\
  &\quad + \int_{\manifold} \jump{v}\cdot\mean{\sigma_{\nu}(u)}
  + \mean{v}\cdot\jump{\sigma_{\nu}(u)}
\end{align}
The first identity results from integration by parts. The second identity 
follows from a rearrangement of the boundary terms and
\begin{align}
  \label{eqn:rearrangement}
  \int_{\manifold}
  v^+\cdot\sigma_n^+(u)+v^-\cdot\sigma_n^-(u)
  &= \int_{\manifold} v^+\cdot\sigma_{\nu}^+(u)-v^-\cdot\sigma_{\nu}^-(u)
  \nonumber\\
  &= \int_{\manifold} \big(v^+-v^-\big)\,\tfrac{1}{2}\big(\sigma_{\nu}^+(u)+\sigma_{\nu}^-(u)\big)
  \nonumber\\
  &\quad + \int_{\manifold} \tfrac{1}{2}\big(v^++v^-\big)\,\big(\sigma_{\nu}^+(u)-\sigma_{\nu}^-(u)\big)
\end{align}
In the weak formulation, the admissible displacement fields will be insufficiently regular
to ensure the existence of the tractions $\sigma_n(\cdot)$. Hence, the terms involving these
tractions in~\eqref{eqn:varform1} must be eliminated by means of the boundary conditions and auxiliary conditions on~$v$.
The traction term on the Neumann boundary can be eliminated by means of~\eqref{eqn:strong_surface}.
To remove the traction term on~$\partial\Omega$, we stipulate that~$v$ vanishes on~$\dirichlet$. 
The traction average in the final term of~\eqref{eqn:varform1} is eliminated by requiring that~$v$ be continuous.
The traction jump in the final term is deleted by means of the traction-continuity condition~\eqref{eqn:strong_tracjump}.

Summarizing, we find that a solution~$u$ of~\eqref{eqn:BVP} satisfies
\begin{equation}
  \label{eqn:varid1}
  \ab(u,v)=l(v)
\end{equation}
for all sufficiently smooth functions $v:\domain\to\reals^N$ that vanish on~$\dirichlet$,
where
\begin{subequations}
  \label{eqn:ab}
  \begin{align}
  \ab(u,v)&=
  \int_{\domain\setminus\manifold}  \sigma( u ):\nabla v
  \label{eqn:aba}\\
  l(v)
  &=
  \int_{\Omega}v\cdot{}f
  \label{eqn:abb}
  \end{align}
\end{subequations}
Note that $v$ is assumed to be smooth on $\domain$ and, in particular, that it is continuous
across the fault $\manifold$.

To furnish a functional setting for the weak formulation of~\eqref{eqn:BVP} based on~\eqref{eqn:varid1}, 
we denote by $\HH{}^k(\Omega)$ the usual Sobolev space of square-integrable functions 
from~$\Omega$ into $\reals^N$ with square-integrable distributional derivatives of order $\leq{}k$, 
equipped with the inner product 
\begin{equation*}
  (u,v)_{k,\Omega}=\sum_{|\alpha|\leq{}k}\int_{\Omega}D^{\alpha}u\cdot{}D^{\alpha}v
\end{equation*}
and the corresponding norm $\smash[b]{\|\cdot\|_{k,\Omega}}$ and semi-norm $\smash[b]{|\cdot|_{k,\Omega}}$.
For the square-integrable functions and the corresponding norm and inner-product, we introduce the
condensed notation $\LL^2(\Omega):=\HH{}^0(\Omega)$, $\|\cdot\|_{\Omega}:=\|\cdot\|_{0,\Omega}$ 
and~$(\cdot,\cdot)_{\Omega}:=(\cdot,\cdot)_{0,\Omega}$.
We denote by $\HH{}^1_{0,\dirichlet}(\Omega)$ the subspace of $\HH{}^1(\Omega)$ of functions
that vanish on $\dirichlet\subseteq\partial\Omega$. To accommodate the discontinuity 
corresponding to the dislocation, we introduce the lift operator $\lift{(\cdot)}$, which assigns to 
any suitable slip $b:\manifold\to\reals^N$ a function
$\lift{b}$ in $\HH{}^1_{0,\dirichlet}(\domain\setminus\manifold)$ such that
$\jump{\lift{b}}=b$. A precise specification of conditions on the slip
distribution is given in Section~\ref{sec:convergence}.
The weak formulation of~\eqref{eqn:BVP} based on~\eqref{eqn:varid1} writes
\begin{framed}
\noindent \textbf{Weak formulation:} given the lift~$ \lift{b}\in{}\HH{}^1_{0,\dirichlet}(\domain\setminus\manifold) $, 
find $u \in \lift{b} + \HH{}^1_{0,\dirichlet}(\domain) $ such that
\begin{equation}
  \label{eqn:weak_varform}
  \ab(u,v) = l(v)
  \quad
  \forall{}v\in{}\HH{}^1_{0,\dirichlet}(\Omega)\,.
\end{equation}
\end{framed}
\noindent
The bilinear form $\ab:\HH{}^1(\Omega\setminus\manifold)\times
\HH{}^1(\Omega\setminus\manifold)\to\reals$ and linear form
$l:\HH{}^1(\Omega\setminus\manifold)\to\reals$ in~\eqref{eqn:weak_varform}, are
the extensions (by continuity) of the corresponding forms in~\eqref{eqn:ab}.
The treatment of the dislocation by means of a lift operator
in~\eqref{eqn:weak_varform} is analogous to the treatment of inhomogeneous
Dirichlet boundary conditions in weak formulations; see, e.g., \cite[p.113
]{ErnGuermond2004}. Let us note that in the weak
formulation~\eqref{eqn:weak_varform}, we have identified
$\{u\in{}\HH{}^1(\domain\setminus\manifold):\jump{u}=0\}$
with~$\HH^1(\domain)$. This identification is unambiguous, on account of a
one-to-one correspondence between the functions in these spaces.

To analyze the weak formulation~\eqref{eqn:weak_varform}, and to prepare the
presentation of the Weakly-enforced-Slip method in Section~\ref{sec:wsm}, we
note that~\eqref{eqn:weak_varform} is to be interpreted in the following
manner: find $u:=\bar{u}+\lift{b}$ with $\bar{u}\in
\HH{}^1_{0,\dirichlet}(\domain) $ such that  
\begin{equation}
  \label{eqn:weak_varform1}
  \qquad{}
  a(\bar{u},v) = l(v)-\ab(\lift{b},v)
  \qquad
  \forall{}v\in{}\HH{}^1_{0,\dirichlet}(\Omega)\,.
\end{equation}
where the bilinear form $a:\HH{}^1(\domain)\times\HH{}^1(\domain)\to\reals$,
\begin{equation}
  \label{eqn:a_restrict}
  a(u,v)=\int_{\domain}\sigma(u):\nabla{}v,
\end{equation}
corresponds to the restriction of $\ab(\cdot,\cdot)$ to $\HH{}^1(\domain)\times{}\HH{}^1(\domain)$.
Indeed, for all pairs $(u,v)\in{}\HH{}^1(\domain)\times{}\HH{}^1(\Omega)$, the function \mbox{$\sigma(u):\nabla{}v$} is 
Lebesgue integrable on~$\domain$, and because the manifold $\manifold$ corresponds to a set of $N$-Lebesgue measure zero, 
the integrals of \mbox{$\sigma(u):\nabla{}v$} on $\domain\setminus\manifold$ and on~$\domain$ coincide. 
It is important to note that the restriction of the bilinear form~$\ab(\cdot,\cdot)$ to $\HH{}^1(\domain)\times{}\HH{}^1(\domain)$ 
in~\eqref{eqn:a_restrict} is independent of the fault~$\manifold$. The function $\bar{u}$ in~\eqref{eqn:weak_varform}
is referred to as the continuous complement of the solution~$u$ with respect to the jump lift~$\lift{b}$ and, indeed, it resides 
in $\HH{}^1_{0,\dirichlet}(\domain) $. 

For the assumed linear-elastic behavior
according to~\eqref{eqn:strain} and~\eqref{eqn:stressstrain}, it follows straightforwardly that
\begin{subequations}
  \label{eqn:aGcontinuouscoercive}
  \begin{align}
    |a_{\Gamma}(u,v)|&\leq{}\bcA\,|{u}|_{1,\domain\setminus\Gamma}|v|_{1,\domain\setminus\Gamma}
    \label{eqn:aGcontinuous}
    \\
    |a_{\Gamma}(u,u)|&\geq{}\ucA\,|{u}|_{1,\domain\setminus\Gamma}^2
    \label{eqn:strongpositive}
  \end{align}
\end{subequations}
for all $u,v\in{}\HH^1(\domain\setminus\Gamma)$, where $\bcA$ and 
$\ucA$ denote the continuity and strong-positivity constants
of the elasticity tensor, respectively, and \mbox{$|\cdot|_{1,\domain\setminus\Gamma}$} represents the usual $\HH^1$-seminorm.
By virtue of Poincar\'e's inequality (see, e.g., \cite[Theorem 5.3.5]{BrennerScott2002}) there exists a bounded positive constant $C_{P}$ such that
\begin{equation}
  \label{eqn:Poincare}
  \|u\|_{1,\Omega\setminus\Gamma}\leq{}C_P|u|_{1,\Omega\setminus\Gamma}\qquad\forall{}u\in\HH^1_{0,\dirichlet}(\Omega\setminus\Gamma)
\end{equation}
Hence, the $\HH^1$-norm and $\HH^1$-semi norm are equivalent on $\HH^1_{0,\dirichlet}(\domain)$. Equations~\EQ{aGcontinuouscoercive}
and~\EQ{Poincare} imply that the bilinear forms~$a_{\Gamma}(\cdot,\cdot)$ and, accordingly, $a(\cdot,\cdot)$, are continuous and coercive. 
Moreover, it is easily verified that the linear 
form $l(\cdot)-\ab(\lift{b},\cdot):\HH^1(\domain)\to\reals$ in the right member of~\eqref{eqn:weak_varform1}
is continuous. Problem~\eqref{eqn:weak_varform1} therefore complies with the
conditions of the Lax-Milgram lemma (see, for instance, \cite[Theorem 2.7.7]{BrennerScott2002}) and, hence, it is well posed.

It is interesting to note that \eqref{eqn:weak_varform1} allows for a physical
interpretation that is very close to Volterra's classical construction for
dislocations, popularly known as the `Volterra knife': to make a cut in the
material, displace the two sides and hold them while welding the seam, and
finally release the sides so the material assumes its state of self-stressed
equilibrium. The initial cut and displacement is represented by the lift $
\lift{} $, which is not in equilibrium and is hence maintained by an external
load. The addition of $ \bar u $ represents the transition to a state of
equilibrium, by removing the external load but leaving the displacement intact.

\subsection{Finite element approximation}
\label{sec:fem}
Galerkin finite-element approximation methods for Volterra's dislocation problem~\eqref{eqn:weak_varform} are generally based on
a restriction of the weak formulation to a suitable finite dimensional subspace. The general
structure of finite-element methods can be found in many textbooks, for instance, \cite{Hughes:2000nx,Zienkiewicz:2000fl,ErnGuermond2004,Ciarlet:1991oq}.
We present here the main concepts and definitions for the ensuing exposition.

The approximation spaces in finite-element methods are generally subordinate to a mesh~$\mathcal{T}_h$, viz., a cover of the domain by non-overlapping
element domains~$\kappa\subset\Omega$. The subscript $h>0$ indicates the dependence of the mesh on a resolution 
parameter, for instance, the diameter of the largest element in the mesh. In general, we impose some auxiliary conditions
on the mesh, such as shape-regularity of the elements and conditions on the connectivity between elements; see, for instance, \cite{ErnGuermond2004,Di-Pietro:2012kx}
for further details. A finite-element approximation space $\VV_h^p\subset\HH^1_{0,\dirichlet}(\Omega)$
subordinate to~$\mathcal{T}_h$ can then be defined, for instance, as the subspace of vector-valued continuous element-wise polynomials of 
degree~$\leq{}p$ which vanish on~$\dirichlet$:
\begin{equation}
\label{eqn:PolynomialSpace}
\VV_h^p=\{v_h\in{}C^0(\overline{\Omega},\IR^N):(v_h)_i|_{\kappa}\in\mathbb{P}^p\text{ for all }\kappa\in\mathcal{T}_h,i=1,2,\ldots,N, v_h|_{\dirichlet}=0\}
\end{equation} 
with $\mathbb{P}^p$ the $N$-variate polynomials of degree $p$. Below, our interest is generally restricted to the $h$-dependence of the
approximation space and, accordingly, we will suppress~$p$. The finite-element approximation of~\eqref{eqn:weak_varform} based on an
approximation space~$\VV_h$ writes: find $u_h:=\bar{u}_h+\lift{b}$ with $\bar{u}_h\in \VV_h $ subject to
\begin{equation}
  \label{eqn:weak_varform1_fem}
  \qquad{}
  a(\bar{u}_h,v_h) = l(v_h)-\ab(\lift{b},v_h)
  \qquad
  \forall{}v_h\in{}\VV_h\,.
\end{equation}
We refer the right-most term in the right member of~\eqref{eqn:weak_varform1_fem} as the lift term.

\addtocounter{footnote}{2}
\renewcommand{\thefootnote}{\fnsymbol{footnote}}
Approximation properties of the Finite Element Method are generally investigated
on the basis of a sequence of meshes $\mathcal{T}_{\mathcal{H}}:=(\mathcal{T}_h)_{h\in\mathcal{H}}$, 
parametrized by a decreasing sequence of mesh parameters $\mathcal{H} = \{ h_1, h_2, \dots \} $ with $0$ as only accumulation point.
A sequence of meshes is called quasi uniform if there exist positive constants $\underline C$ and~$\overline{C}$, independent of~$h$, such that
$\underline{C}h \leq \mathrm{diam}(\kappa) \leq \overline C h $ for all $\kappa\in\mathcal{T}_h$ and all $h\in\mathcal{H}$.
Standard interpolation theory in Sobolev spaces (see, for instance, \cite{ErnGuermond2004,BrennerScott2002}) conveys that a sequence 
of approximation spaces $\VV_{\mathcal{H}}$ of the form~\eqref{eqn:PolynomialSpace} based on 
quasi-uniform meshes possesses the following approximation property: there exists a positive constant~$\const$\footnote{We use $\const$ to denote a generic positive constant, of which the value and connotation may change from one instance to the next, even within a single chain of expressions.} independent of~$h$ such that for all $h\in\mathcal{H}$,
all $k\geq{}0$ and both $m\in\{0,1\}$, it holds that
\begin{equation}
\label{eqn:OptimalInterpolation}
\inf_{v_h\in\VV_h}\|v-v_h\|_{m,\Omega}\leq\const{}h^{l+1-m}|v|_{l+1,\Omega}
\qquad\forall{}v\in\HH^{k+2}(\Omega)\cap\HH^1_{0,\dirichlet}(\Omega)
\end{equation}
with $l=\min\{p,k+1\}$. The estimate~\eqref{eqn:OptimalInterpolation} imparts that for all sufficiently smooth~$v\in\HH^1_{0,\dirichlet}(\Omega)$, 
the $\|\cdot\|_{m,\Omega}$-norm of the best approximation in~$\VV_h$ in that norm decays as $O(h^{p+1-m})$ as $h\to{}0$.

The lift $ \ell_b $ in~\eqref{eqn:weak_varform1_fem} is in principle arbitrary. However, 
the use of an arbitrary lift carries severe algorithmic complexity, as one has
to explicitly construct the lift and evaluate integrals involving products of
(gradients of) the lift and finite-element shape functions. Moreover, the evaluation
of these integrals by a suitable numerical integration scheme generally leads to a 
high computational complexity, because the fault is allowed to intersect
elements, and there are no efficient quadrature schemes to integrate the
discontinuous function that arises. Therefore, in practice, it is
convenient to integrate the lift in the finite-element setting. Provided that that
the fault coincides with element edges, a lift ${\lift{b}}_h$ is then constructed
in the broken approximation space:
\begin{equation}
\label{eqn:BrokenSpace}
\hat{\VV}_h
=
\big\{u\in{}C^0(\overline{\Omega}\setminus\Gamma):u|_{\Omega^\pm}\in(\VV_h)|_{\Omega^\pm}\big\}
\end{equation}
It is to be noted that the slip does not generally reside in $\jump{\hat{\VV}_h}$ and, accordingly,
$b$ is to be replaced by a suitable interpolant $b_h$. Moreover, if the fault does not coincide with
element edges, then it is to be replaced by an approximation subject to this condition. The aforementioned 
approach corresponds to the split-node method by Melosh~\cite{melosh81}, where the adjective `split' refers to 
the discontinuities between the elements in $ \Omega^+ $ and $ \Omega^- $ contiguous to~$\Gamma$. The split-node approach is
analogous to the standard treatment of Dirichlet boundary conditions; see, for
instance,~\cite[\S3.2.2]{ErnGuermond2004}. The split-node approach bypasses the aforementioned
complications of an arbitrary-lift approach
and the evaluation of the lift term comes essentially free of charge as part of the regular stiffness matrix. 
A fundamental disadvantage of the split-node approach, however, is that it requires that the fault
coincides with element edges, which connects the fault geometry to the geometry
and, generally, the topology of the mesh. As a result, computational primitives such
as the stiffness matrix and preconditioners for the stiffness matrix, which are contingent on
the mesh, cannot be reused for analyses of alternative fault geometries. This is a prohibitive restriction
if many dislocation geometries have to be considered, for instance, in inverse problems.

\section{The Weakly-enforced Slip Method}
\label{sec:wsm}

In section~\ref{sec:fem} we substantiated that the treatment of the lift term in the split-node approach, 
which is the natural counterpart of the standard treatment of Dirichlet boundary
conditions in finite-element approximations, is unsuitable if many fault geometries have to be
analysed, on account of the inherent dependence of the finite-element mesh on the fault geometry.

In this section we propose a new and fundamentally different treatment of the lift term that retains 
mesh independence:  the \emph{Weakly-enforced Slip Method} (WSM). Below, we present two different formal
derivations of the WSM formulation. The derivation in Section~\ref{sec:wsmderivation1} relies on a limit procedure.
In Section~\ref{sec:wsmderivation2}, we derive the WSM formulation on the basis of Nitsche's variational principle
for enforcing Dirichlet-type boundary conditions~\cite{Nitsche:1971fk}. 

\subsection{Collapsing the lift}
\label{sec:wsmderivation1}

Our aim is to derive a tractable finite-element approximation of Volterra's dislocation 
problem~\eqref{eqn:weak_varform}, in which the finite-element space and the bilinear form
and, accordingly, the stiffness matrix are independent of the fault geometry. 

In principle, the lift-based Galerkin formulation~\eqref{eqn:weak_varform1_fem} already
exhibits the appropriate form. However, as elaborated in Section~\ref{sec:fem}, the
corresponding finite-element formulation is intractable for general lift operators.
One can infer, however, that the complications engendered by a general lift operator
can be avoided by collapsing the support of the lift on the fault. The integration
of a discontinuous function in~$\Omega$ then reduces to the integration of a smooth function
on the dislocation. Numerical evaluation of integrals on the dislocation is feasible
given a parametrization of the fault. Moreover, the intricate explicit construction
of a lift is obviated, and only the slip distribution itself is required, which is 
presented as part of the problem specification. A further advantage of
collapsing the lift is that of localization: instead of having to
evaluate the lift term for
all shape functions of which the support intersects with the support of the
lift, only the shape functions of which the support intersects with the
dislocation have to be considered.

To derive the lift term corresponding to a collapsed lift, we consider a symmetric lift
$\ell_{b}^{\varepsilon}$ as illustrated in Fig.~\ref{fig:lift}, with compact support in an $\varepsilon $-neighborhood of the fault:
\begin{equation}
  \label{eqn:epsneighborhood}
  \Gamma^{\varepsilon}:=\{x\in\Omega:\mathrm{dist}(x,\Gamma)<\varepsilon\}.
\end{equation}
By virtue of the compact support of $\ell_b^{\varepsilon}$ in $ \Gamma^\varepsilon $, the lift 
term of \eqref{eqn:weak_varform1_fem} evaluates to
\begin{equation}
  a_\Gamma(\ell_{b}^{\varepsilon},v)
  = \int_{ \Gamma^\varepsilon \setminus\Gamma}
  \sigma(v) : \nabla \ell_{b}^{\varepsilon}
 = \int_\Gamma b \cdot \mean{ \sigma_\nu( v ) }
  -\int_{ \Gamma^\varepsilon\setminus\Gamma}
 \ell_{b}^{\varepsilon}\cdot\divstress( v )\,.
  \label{eqn:almostwsm}
\end{equation}
The first identity follows from the symmetry of the bilinear form in~\EQ{aba}. 
The second identity results from integration-by-parts and $ \jump{\ell_{b}^{\varepsilon}}=b$ 
and $\mean{\ell_{b}^{\varepsilon}}=0$. Let us note that the second identity is formal
in the sense that it requires more regularity of~$v$ than is actually provided by $\HH^1(\Omega)$.
We shall momentarily ignore this aspect, but it manifests itself in the analysis of the approximation
properties of the WSM formulation in Section~\ref{sec:convergence}. Without loss of generality,
we can assume $\ell_b^{\varepsilon}$ to be bounded independent of $\varepsilon$. The second
term in the final expression in~\eqref{eqn:almostwsm} then vanishes if $\Gamma^{\varepsilon}$ collapses
on~$\Gamma$. Therefore, formally passing to the limit in~\eqref{eqn:almostwsm}, we obtain
\begin{equation}
  \label{eqn:liftlimit}
a_\Gamma(\ell_{b}^{\varepsilon},v )
\xrightarrow{\varepsilon \to +0}
   \int_\manifold b \cdot \mean{ \sigma_\nu( v ) }
\end{equation}
According to~\eqref{eqn:liftlimit}, the lift term reduces to an integral on $ \Gamma $ in
the limit of collapsing the support of the lift onto the fault. The WSM formulation
corresponds to replacing the lift term $a_{\Gamma}(\ell_b,\cdot)$ in the right member
of~\eqref{eqn:weak_varform1_fem} by the limit functional according to~\eqref{eqn:liftlimit}:
\begin{framed} 
\noindent \textbf{Weakly-enforced Slip Method:} given a slip distribution
$ b : \Gamma \rightarrow \mathbb R^N $, find $ u_h \in \VV_h $ such that
\begin{equation}
  a( u_h, v_h ) = l( v_h ) - \int_\manifold \slip
  \cdot \mean{ \sigma_\nu( v_h ) }
  \qquad
  \forall{}v_h\in{}\VV_h\,.
\label{eqn:wsm} 
\end{equation} 
\end{framed}

\begin{figure}
  \centering
  \includegraphics{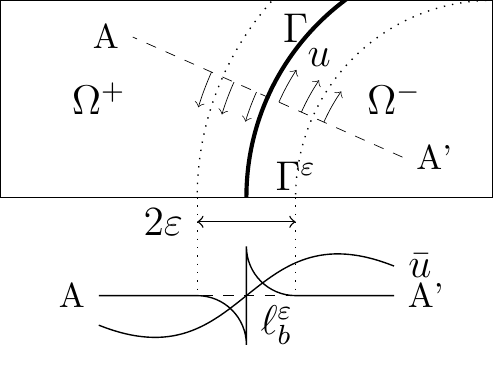}
  \caption{Schematic representation of an $\varepsilon$-local lift $
    \supp(\ell_{b}^{\varepsilon}) \in \Gamma^\varepsilon $, which is added to a continuous $
    \bar u $ to form the discontinuous displacement field $ u $.}
  \label{fig:lift}
\end{figure}
The nomenclature {\em Weakly-enforced Slip Method} serves to indicate that in~\eqref{eqn:wsm}
the slip discontinuity is weakly enforced in the right-hand side only, and does not appear
in the approximation space.

It is to be noted that although the WSM formulation~\eqref{eqn:wsm} is 
derived from the lift-based formulation~\eqref{eqn:weak_varform1_fem} by
collapsing the lift, in contrast to~\eqref{eqn:weak_varform1_fem} we do not
add a lift to the continuous complement $u_h$. Because $\VV_h\subset{}C^0(\Omega,\IR^N)$,
WSM thus yields a continuous approximation to the discontinuous solution of the
Volterra dislocation problem~\eqref{eqn:weak_varform}. This implies
that the approximation near the dislocation will inevitably be inaccurate. 
We will however show in Section~\ref{sec:convergence} that away from the dislocation,
the error in the WSM approximation converges optimally under mesh refinement.

\subsection{Alternative derivation via Nitsche's variational principle}
\label{sec:wsmderivation2}

To further elucidate the WSM formulation, we present in this section an alternative derivation of~\eqref{eqn:wsm}
based on {\em Nitsche's Variationsprinzip}~\cite{Nitsche:1971fk}. Nitsche presented in~\cite{Nitsche:1971fk} a
variational principle for weakly imposing Dirichlet boundary conditions in finite-element approximations of 
elliptic problems, i.e., without incorporating such essential boundary conditions in the approximation space.
Nitsche's variational principle can be extended to the Volterra dislocation problem~\eqref{eqn:weak_varform} 
to weakly impose the slip discontinuity. To specify this extension, we consider a suitable broken space 
$\hat{\VV}(h)$ which encapsulates the broken approximation space $\hat{\VV}_h$ and contains the solution~$u$ 
to the Volterra dislocation problem~\eqref{eqn:weak_varform}.
We define the quadratic functional $J:\hat{\VV}(h)\to\reals$:
\begin{equation}
\label{eqn:J(w)}
J(w)=\frac{1}{2}a_{\Gamma}(w,w)-\int_{\Gamma}\jump{w}\cdot\mean{\sigma_{\nu}(w)}+\frac{\psi}{2}\int_{\Gamma}\jump{w}^2\,,
\end{equation} 
for some suitable constant $\psi>0$, generally dependent on~$h$. Let $\check{\VV}_h$ denote either the broken approximation space~$\hat{\VV}_h$ or
the continuous approximation space~$\VV_h$ and consider the approximation $\check{u}_h\approx{}u$ according to:
\begin{equation}
\label{eqn:arginf}
\check{u}_h:=\arginf_{v_h\in\check{\VV}_h}J(u-v_h)
\end{equation}
Equation~\eqref{eqn:arginf} implies that $\check{u}_h$ satisfies the Kuhn-Tucker optimality condition $J'(u-\check{u}_h)(v_h)=0$ for all
$v_h\in\check{\VV}_h$, where $v\mapsto{}J'(w)(v)$ denotes the Fr\'echet derivative of $J$ at~$w$. For $J$ according to~\eqref{eqn:J(w)},
the optimality condition implies that $\check{u}_h\in\check{\VV}_h$ satisfies:
\begin{equation}
\label{eqn:SIPG}
\begin{aligned}
&
a_{\Gamma}(\check{u}_h,v_h)-\int_{\Gamma}\jump{\check{u}_h}\cdot\mean{\sigma_{\nu}(v_h)}-\int_{\Gamma}\jump{v_h}\cdot\mean{\sigma_{\nu}(\check{u}_h)}
+
\psi\int_{\Gamma}\jump{\check{u}_h}\cdot\jump{v_h}
\\
&\quad=
a_{\Gamma}(u,v_h)-\int_{\Gamma}\jump{u}\cdot\mean{\sigma_{\nu}(v_h)}-\int_{\Gamma}\jump{v_h}\cdot\mean{\sigma_{\nu}(u)}
+
\psi\int_{\Gamma}\jump{u}\cdot\jump{v}_h
\\
&\quad=
l(v_h)
-\int_{\Gamma}b\cdot\mean{\sigma_{\nu}(v_h)}
+
\psi\int_{\Gamma}b\cdot\jump{v}_h
\qquad\forall{}v_h\in\check{\VV}_h.
\end{aligned}
\end{equation}
The final identity follows by invoking integration-by-parts on $a_{\Gamma}(u,v_h)$, a rearrangement of terms
and the strong formulation of Volterra's dislocation problem in~\eqref{eqn:BVP}.

If the broken approximation space~$\hat{\VV}_h$ is inserted for~$\check{\VV}_h$, the optimality condition~\eqref{eqn:SIPG} can be 
reinterpreted as a symmetric-interior-penalty (SIP) discontinuous-Galerkin-type formulation; see, for instance,~\cite[Sec. 4.2]{Di-Pietro:2012kx}. In contrast to
standard discontinuous Galerkin formulations, the slip terms in the right-hand side, i.e., the terms containing $b$ in the ultimate expression in~\eqref{eqn:SIPG},
enforce the jump discontinuity at the fault. Convergence results for this formulation can be established in a similar manner as in~\cite{Nitsche:1971fk}.
For suitable stabilization parameters~$\psi$, the functional $J$ in~\eqref{eqn:J(w)} is equivalent to $\|\cdot\|_{1,\Omega\setminus\Gamma}$ and~\eqref{eqn:arginf}
implies quasi-optimal convergence of~$\check{u}_h$.

If the continuous approximation space~$\VV_h$ is inserted for~$\check{\VV}_h$, the terms containing~$\jump{\check{u}_h}$ and $\jump{v_h}$ vanish,
and we obtain the WSM formulation~\eqref{eqn:wsm}. Hence, WSM can indeed be interpreted as an extension of Nitsche's variational principle to
the Volterra dislocation problem with continuous approximation spaces. Furthermore, in view of $\hat{\VV}_h\supset\VV_h$, WSM can also be regarded
as a SIP discontinuous Galerkin formulation, based on a continuous subspace. One can infer that the WSM approximation retains the quasi-optimal
approximation property in $\|\cdot\|_{1,\Omega\setminus\Gamma}$. However, since the continuous approximation spaces applied in WSM are not dense 
in $H^1_{0,\dirichlet}(\Omega\setminus\Gamma)$, the immediate significance of this quasi-optimality for the approximation properties of
WSM is limited.

\section{Approximation properties of WSM}
\label{sec:convergence}

An analysis of the approximation properties of the Weakly-enforced Slip
Method is non-trivial, owing to the fact that in WSM one considers
approximations in $\HH^1_{0,\dirichlet}(\domain)$-conforming subspaces,
while the solution itself resides in
$\HH^1_{0,\dirichlet}(\domain\setminus\manifold)$, and the embedding of
$\HH^1_{0,\dirichlet}(\domain)$ in
$\HH^1_{0,\dirichlet}(\domain\setminus\manifold)$ is non-dense.
Essentially, we attempt to approximate a discontinuous function by a
continuous one and, in doing so, we incur an error that does not vanish
under mesh refinement. Standard techniques to assess global
approximation properties, on all of $\domain\setminus\manifold$, viz.,
C\'ea's lemma or the Strang lemmas~\cite[Lems.
2.25-27]{ErnGuermond2004}, therefore provide only partial information;
see Section~\ref{sec:globconv}. 

To provide a foundation for analyzing the approximation provided by WSM, we we
first recall some aspects of traces and tractions in
Section~\ref{sec:TracTrac}. Section~\ref{sec:globconv} investigates the global
approximation properties of WSM, i.e., on the entire domain.
Section~\ref{sec:locconv} establishes the local approximation properties of
WSM, i.e, on the domain excluding a neighborhood of the fault.

\subsection{Traces and tractions}
\label{sec:TracTrac}
To enable an analysis of the approximation behavior of WSM, some
elementary aspects of trace theory are required. For a comprehensive
overview, we refer to~\cite{Sayas:2009fk}. To make the theory applicable
to the Volterra dislocation problem, we must impose auxiliary smoothness
conditions on the elasticity tensor. In particular, we assume:
\begin{equation}
\label{eqn:smoothelast}
A_{ijkl}\in{}C^1\big(\overline{\Omega}\big)
\end{equation}
It is to be noted that~\EQ{smoothelast} implies that the elasticity tensor is $C^1$ continuous on the domain, 
including the boundary, and across the fault, including the dislocation. 
The $C^1$ continuity on the subdomains $\overline{\Omega^+}$ and~$\overline{\Omega^-}$ ensures that tractions
are well defined. 
The $C^1$ continuity across the fault is required to establish global convergence of the WSM approximation
in the $\LL^2$-norm and local convergence in the $\HH^1$-norm; see Sections \ref{sec:globconv} and~\ref{sec:locconv} below. 
Let us note that in tectonophysics the elasticity tensor is generally assumed to be uniformly constant in the domain.

Let $\omega\subset\reals^N$ denote an arbitrary connected domain with Lipschitz boundary. In particular,
recalling the partition of $\domain$ into the complementary subsets $\domain^{\pm}$, we envisage
$\omega\in\{\Omega{}^+,\Omega{}^-\}$.
We denote by $\widehat{\gamma}$ the restriction of a function in $\vctr{C}^1\smash[t]{(\overline{\omega})}$ 
to the boundary $\partial\omega$. By virtue of the density of $\vctr{C}^1(\omega)$ in $\HH^1(\omega)$, the 
operator can be extended to a linear continuous \emph{trace operator}, denoted by $\gamma$, 
from~$\HH^1(\omega)$ into~$\LL^2(\partial\omega)$. The image of $\gamma$ is denoted by $\smash[t]{\HH^{1/2}(\partial\omega)}$.
Considering a subset $\varkappa\subset\partial\omega$, we denote by $\gamma_{\varkappa}:=(\gamma(\cdot))|_{\varkappa}$ the 
composition of the trace operator and the restriction to $\varkappa$. The image of $\gamma_{\varkappa}$ restricted to
the class of functions $\smash[t]{\HH^1_{0,\partial\omega\setminus\varkappa}(\omega)}$ that vanish on $\partial\omega\setminus\varkappa$
is indicated by $\smash[t]{\HH^{1/2}_0(\varkappa)}$:
\begin{equation}
\HH^{1/2}_{0}(\varkappa)=\big\{\gamma_{\varkappa}u:u\in{}\HH^1_{0,\partial\omega\setminus\varkappa}(\omega)\big\}.
\end{equation}
The space $\smash[t]{\HH^{1/2}_0(\varkappa)}$ can be endowed with the norm:
\begin{equation}
\label{eqn:pmhalfnorm}
\|\lambda\|_{1/2,\varkappa}:=\inf{}\big\{\|u\|_{1,\omega}:u\in{}\HH^1_{0,\partial\omega\setminus\varkappa}(\omega), 
\gamma_{\varkappa}u=\lambda\big\}.
\end{equation}
with the obvious extension to $\HH^{1/2}(\partial\omega)$. There exist continuous right inverses
\begin{equation} 
\gamma^{-1}:\HH^{1/2}(\partial\omega)\to\HH^1(\omega),
\qquad
\gamma_{\varkappa}^{-1}:\HH^{1/2}_0(\varkappa)\to\HH^1_{0,\partial\omega\setminus\varkappa}(\omega),
\end{equation}
of $\gamma$ and $\gamma_{\varkappa}$. Such a right inverse is called a \emph{lifting} (or \emph{lift}) of the trace. It is to
be noted that lift operators are generally non-unique.

We denote by $\widehat{\sigma}_n:u\mapsto{}n\cdot\gamma(\sigma(u))$ the traction of a function 
in $\vctr{C}^2(\overline{\omega})$ 
on~$\partial\omega$, where $n$ denotes the exterior unit normal vector on $\partial\omega$. We define
\begin{equation}
\HH^1_{\subdivstress}(\omega):=\big\{u\in{}\HH^1(\omega):\divstress{}(u)\in{}\LL^2(\omega)\big\}.
\end{equation}
Applying index notation for transparency, the chain rule yields
\begin{equation}
\partial_j\sigma_{ij}(u)=(\partial_j\CC_{ijkl})\epsilon_{kl}(u)+\CC_{ijkl}(\partial_j\epsilon_{kl}(u))
\end{equation}
Therefore, the condition $\divstress(u)\in\LL^2(\omega)$ provides a meaningful condition on~$u$ 
if~$\partial_j\CC_{ijkl}\in{}L^{\infty}(\omega)$. For $\omega\in\{\Omega{}^+,\Omega{}^-\}$, this auxiliary 
condition on the elasticity tensor is satisfied under the standing assumption~\EQ{smoothelast}.
The vector space~$\HH^1_{\subdivstress}(\omega)$ is a Hilbert space under the inner-product associated with the
norm $\smash[b]{(\|\cdot\|_{1,\omega}^2+\|\divstress{(\cdot)}\|_{\omega}^2)^{1/2}}$.
The traction~$\widehat{\sigma}_n$ can be extended to a bounded linear operator, denoted by~$\sigma_n$, 
from $\HH^1_{\subdivstress}(\omega)$ into $\HH^{-1/2}(\partial\omega):=\smash[t]{\big[\HH^{1/2}(\partial\omega)\big]'}$, the dual space 
of $\HH^{1/2}(\partial\omega)$. For each $u\in\smash[t]{\HH^1_{\subdivstress}(\omega)}$, the functional $\sigma_n(u)$ acts on 
functions in $\smash[t]{\HH^{1/2}(\partial\omega)}$ by means of the following duality pairing:
\begin{equation}
\label{eqn:dualpair}
\langle{}\sigma_n(u),\lambda\rangle
=
\int_{\omega}\divstress{(u)}\cdot\gamma^{-1}(\lambda)
+
\int_{\omega}\sigma(u):\grad\gamma^{-1}(\lambda)
\end{equation}
One may note that for functions in $\vctr{C}^2(\overline{\omega})$, Equation~\EQ{dualpair} corresponds
to a standard integration-by-parts identity.
Continuity of the operator $\sigma_n$ thus defined follows from the sequence of bounds: 
\begin{equation}
\label{eqn:tracboundseq}
\begin{aligned}
\big|\langle{}\sigma_n(u),\lambda\rangle\big|
&\leq
\big\|\divstress{(u)}\big\|_{\omega}
\big\|\gamma^{-1}(\lambda)\big\|_{\omega}
+
\big\|\sigma(u)\big\|_{\omega}
\big|\gamma^{-1}(\lambda)\big|_{1,\omega}
\\
&\leq
\Big(\big\|\divstress{(u)}\big\|_{\omega}^2+\big\|\sigma(u)\big\|_{\omega}^2\Big)^{1/2}
\Big(\big\|\gamma^{-1}(\lambda)\big\|_{\omega}^2+\big|\gamma^{-1}(\lambda)\big|_{1,\omega}\Big)^{1/2}
\\
&\leq
\big(1+\bcA^2\big)^{1/2}
\big(\|u\|_{1,\omega}^2+\|\divstress{(u)}\|_{\omega}^2\big)^{1/2}
\big\|\gamma^{-1}(\lambda)\big\|_{1,\omega}
\end{aligned}
\end{equation}
and the continuity of the lifting of the trace from $\HH^{1/2}(\partial\omega)$ into $\HH^{1}(\omega)$. 
The dual space $\smash[t]{\big[\HH^{1/2}(\partial\omega)\big]'}$ is a Banach space under the norm
\begin{equation*}
\|v\|_{-1/2,\partial\omega}=
\sup_{\lambda\in\HH^{1/2}(\partial\omega)}\frac{\langle{}v,\lambda\rangle}{\|\lambda\|_{1/2,\partial\omega}}.
\end{equation*}
The restriction of the traction $(\widehat{\sigma}_n(\cdot))|_{\varkappa}$ to a subset $\varkappa\subset\partial\omega$ 
of the boundary can be extended to a bounded linear operator~$\sigma_{n,\varkappa}$ from $\HH^1_{\subdivstress}(\omega)$
into~$\HH^{-1/2}(\varkappa):=\smash[t]{\big[\HH^{1/2}_0(\varkappa)\big]'}$. The functional $\sigma_{n,\varkappa}(u)$
acts on functions in~$\smash[t]{\HH^{1/2}_0(\varkappa)}$ via the duality pairing:
\begin{equation}
\label{eqn:dualpair1}
\langle{}\sigma_{n,\varkappa}(u),\lambda\rangle
=
\int_{\omega}\divstress{(u)}\cdot\gamma_{\varkappa}^{-1}(\lambda)
+
\int_{\omega}\sigma(u):\grad\gamma_{\varkappa}^{-1}(\lambda)
\end{equation}
Continuity of the operator $\sigma_{n,\varkappa}$ thus defined follows in a similar manner as in~\EQ{tracboundseq}. 

For non-rupturing faults, it holds that $b\in{}\smash[t]{\HH^{1/2}_0(\Gamma)}$ and the above definitions apply 
without revisions. The slip 
discontinuity~\eqref{eqn:strong_dispjump} and the traction discontinuity~\eqref{eqn:strong_tracjump} are then
to be understood in the sense of traces and tractions outlined above. However, for rupturing faults, i.e., if the dislocation 
intersects with the boundary of the domain, then $b\notin\smash[t]{\HH^{1/2}_0(\Gamma)}$, and further consideration is required.
We can accommodate $b$ in the space:
\begin{equation}
\widetilde{\HH}{}^{1/2}(\Gamma):=\big\{\gamma_{\Gamma}u:u\in\HH^1(\omega)\big\}
\end{equation}
with $\omega\in\{\Omega^+,\Omega^-\}$. The space $\widetilde{\HH}{}^{1/2}(\Gamma)$ is a Banach space under the norm
\begin{equation}
\widetilde{\|\lambda\|}{}_{1/2,\Gamma}=
\inf\big\{\|u\|_{1,\omega}:u\in{}\HH^1(\omega),\gamma_{\Gamma}u=\lambda\big\}
\end{equation}
The principal complication pertaining to rupturing faults, is that the corresponding slip vectors
cannot be lifted into~$\HH^1_{0,\partial\omega\setminus\Gamma}(\omega)$, as traces of functions in $\HH^1(\omega)$ do 
not admit the discontinuity that would otherwise arise at the intersection of $\smash[t]{\overline{\Gamma}}$ 
and~$\smash[t]{\overline{\partial\omega\setminus\Gamma}}$.
Hence, we cannot use~\EQ{dualpair1} to define an extension of the restriction of the 
traction, $(\widehat{\sigma}{}_n(\cdot))|_{\Gamma}$, to a bounded 
linear operator from~$\HH^1_{\subdivstress}(\omega)$ into~$\smash[t]{\big[\widetilde{\HH}{}^{1/2}(\Gamma)\big]'}$. 
However, there exists a continuous right inverse 
$\smash[t]{\widetilde{\gamma}{}_{\Gamma}^{-1}:\widetilde{\HH}{}^{1/2}(\Gamma)\to\HH^1(\omega)}$ 
of the operator $\gamma_{\Gamma}$, for instance,
\begin{equation}
\label{eqn:rupturelift}
\widetilde{\gamma}{}_{\Gamma}^{-1}(\lambda)
=
\arginf\Big\{|u|_{1,\omega}:u\in{}\HH^1(\omega),\gamma_{\Gamma}u=\lambda\Big\}
\end{equation} 
Let us note that the image of the lift operator $\widetilde{\gamma}{}_{\Gamma}^{-1}$ corresponds to a harmonic function subject to
inhomogeneous Dirichlet conditions on $\Gamma$ with data $\lambda$ 
and a homogeneous Neumann condition on $\partial\omega\setminus\Gamma$. The lift operator $\widetilde{\gamma}{}_{\Gamma}^{-1}$ can 
be modified to include homogeneous Dirichlet conditions on $\dirichlet\subset\partial\omega\setminus\Gamma$ in the codomain, 
if necessary. The lift operator $\widetilde{\gamma}{}_{\Gamma}^{-1}$ enables us to extend $(\widehat{\sigma}{}_n)|_{\Gamma}$ to
a continuous linear operator:
\begin{equation}
\label{eqn:rupturesigman0}
{\widetilde{\sigma}}{}_{n,\Gamma}:
\Big\{u\in\HH^1_{\subdivstress}(\omega):\big\|\sigma_{n,\partial\omega\setminus\Gamma}(u)\big\|_{-1/2,\partial\omega\setminus\Gamma}=0\Big\}
\to
\big[\widetilde{\HH}{}^{1/2}(\Gamma)\big]'
\end{equation}
The functional $\widetilde{\sigma}{}_{n,\Gamma}(u)$ acts on functions in ${\widetilde{\HH}{}^{1/2}(\Gamma)}$ via the duality pairing:
\begin{equation}
\label{eqn:dualpair2}
\langle{}\widetilde{\sigma}_{n,\Gamma}(u),\lambda\rangle
=
\int_{\omega}\divstress{(u)}\cdot\widetilde{\gamma}{}_{\Gamma}^{-1}(\lambda)
+
\int_{\omega}\sigma(u):\grad \widetilde{\gamma}{}_{\Gamma}^{-1}(\lambda)
\end{equation}
Essentially, in~\EQ{rupturesigman0} and~\EQ{dualpair2}, we have defined the extension $\widetilde{\sigma}_{n,\Gamma}$ 
of the restriction of the traction to the dislocation,~$(\widehat{\sigma}_n(\cdot))|_{\Gamma}$, 
by restricting the domain of the extended operator to functions for which the
traction vanishes on $\partial\omega\setminus\Gamma$. This restriction in the definition is consistent with 
the standing assumption that an intersection of the dislocation with the boundary of the domain can only occur 
at Neumann boundaries.

In the analysis below, we restrict ourselves to non-rupturing faults. The analysis in Section~\ref{sec:globconv} however 
extends to rupturing faults by replacing the spaces and trace and traction operators for non-rupturing faults with those for 
rupturing faults.

\subsection{Global approximation properties of WSM}
\label{sec:globconv}
To assess the global approximation properties of WSM, we first construct an upper bound
on the functional $(b,\mean{\sigma_{\nu}(\cdot)})_{\Gamma}:\VV_h\to\reals$ in the right member of the WSM formulation.
It is to be noted that, in general, $\VV_h\not\subset{}\HH^1_{\divstress}(\Omega)$. Hence, the functional 
$(b,\mean{\sigma_{\nu}(\cdot)})_{\Gamma}:\VV_h\to\reals$ does not admit an interpretation as a duality pairing according 
to~\EQ{dualpair1}. Because $\VV_h$ is piecewise polynomial, however, an upper bound can be constructed on the basis 
of inverse and trace inequalities. We refer to~\cite{Di-Pietro:2012kx} for a comprehensive treatment of this
subject. Inverse and trace inequalities can generally be derived under suitable (sufficient) regularity conditions on the finite-element mesh; 
see~\cite[Chap.~1]{Di-Pietro:2012kx}. 
A detailed treatment of the conditions underlying inverse and trace inequalities is beyond the scope of this work. Instead, we shall
directly assume that a suitable discrete trace inequality holds. To formulate the assumption,
for a given sequence of partitions $\mathcal{T}_{\mathcal{H}}$ and for each $h\in\mathcal{H}$, we denote by $\mathcal{S}_h$ 
a dense cover of the fault by means of open intersections of the fault with element interiors or with element faces, i.e.,
\begin{equation}
\begin{aligned}
\mathcal{S}_h&=
\{s\subset\Gamma:s=\Gamma\cap\kappa\neq\emptyset\text{ for some }\kappa\in\mathcal{T}_h\}
\\
&\phantom{=}\cup
\{s\subset\Gamma:s=\mathrm{int}(\Gamma\cap\partial\kappa_0\cap\partial\kappa_1)\neq\emptyset\text{ for some }\kappa_0,\kappa_1\in\mathcal{T}_h,\kappa_0\neq\kappa_1\}
\end{aligned}
\end{equation}
See Figure~\FIG{segments} for an illustration. The separate treatment of element boundaries serves to ensure that subsets
of $\Gamma$ that coincide with element faces are separately included in~$\mathcal{S}_h$.
\begin{figure}
\centering
\includegraphics[scale=.4]{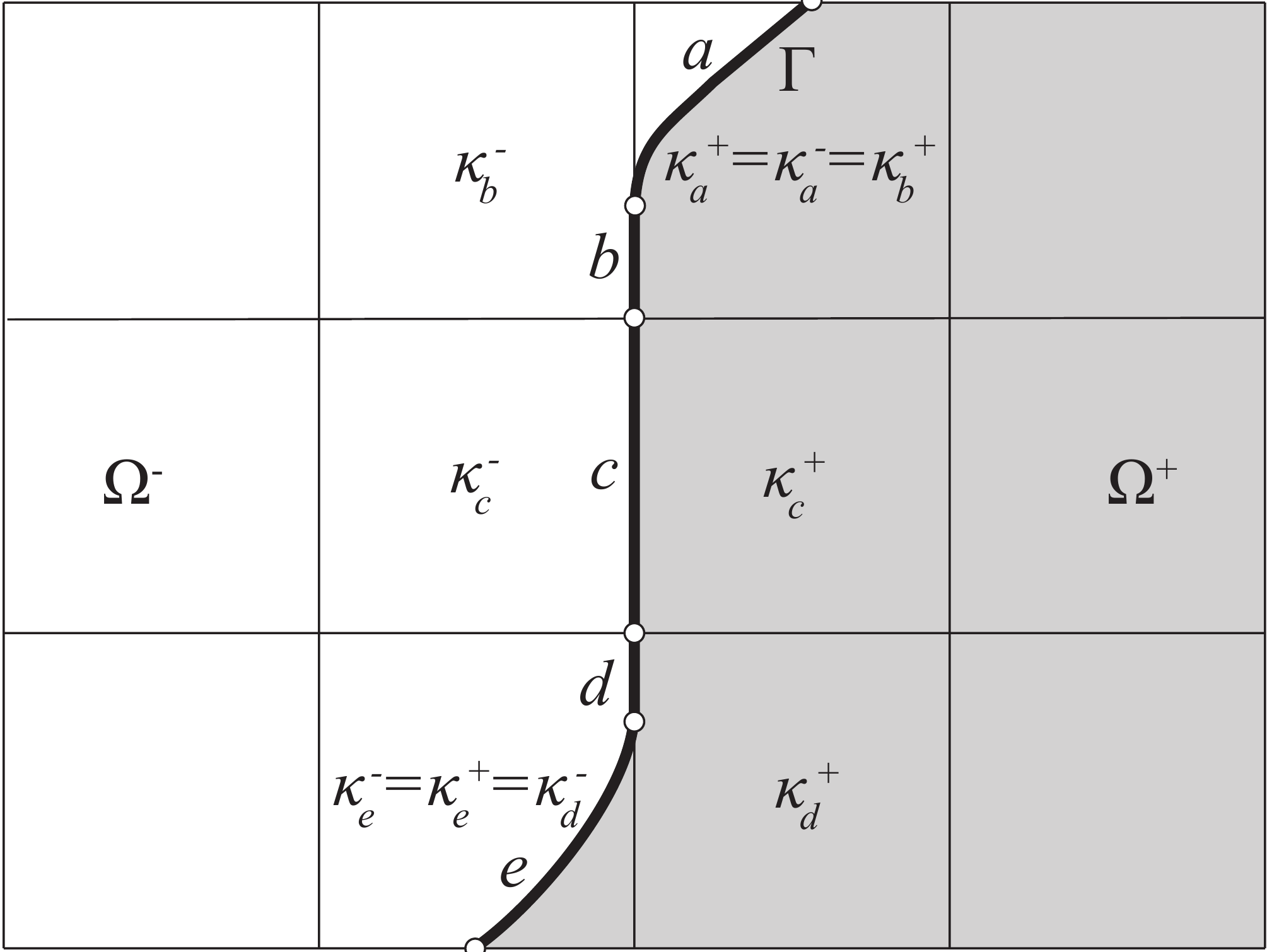}
\caption{Illustration of the covering of the fault~$\Gamma$ by segments $\mathcal{S}_h=\{a,b,c,b,e\}$,
and the corresponding elements $\{\kappa^+_{(\cdot)},\kappa^-_{(\cdot)}\}$. Because $b,c,d$ coincide
with element boundaries $\kappa^+_{(\cdot)}\neq\kappa^-_{(\cdot)}$ for these segments. The segments $a,e$ 
are interior to elements and, accordingly $\kappa^+_{(\cdot)}=\kappa^-_{(\cdot)}$ for these segments.%
\label{fig:segments}}
\end{figure}
To each segment~$s\in\mathcal{S}_h$, we associate a pair of contiguous elements $\{\kappa_s^+,\kappa_s^-\}$ such
that $s\subset\kappa^{\pm}\cup\partial\kappa^{\pm}$ and $\kappa^{\pm}\cap\Omega^{\pm}\neq\emptyset$.
If $s\subset\partial\kappa$ (resp. $s\subset\kappa$) for some $\kappa$ then $\kappa^+$ and $\kappa^-$
will be distinct (resp. identical).
We assume that the following discrete trace inequality holds 
for all $h\in\mathcal{H}$, all $s\in\mathcal{S}_h$ and all element-wise polynomial functions~$v_h$ 
of degree at most~$p$:
\begin{equation}
\label{eqn:DiscrTraceIneq}
\big(\diam(\kappa)\big)^{1/2}\|v_h\|_s\leq{}C_{\Gamma}\|v_h\|_{\kappa},
\end{equation}
for both $\kappa\in\{\kappa_s^+,\kappa_s^-\}$, for some $C_{\Gamma}>0$ independent of~$h$; cf.~\cite[Lemma~1.46]{Di-Pietro:2012kx}.
The constant $C_{\Gamma}$ is allowed to increase with the polynomial order~$p$.

\begin{lemma}[Continuity of the WSM linear form]
\label{thm:continuity}
Consider a manifold~$\Gamma\subset\Omega\subset\reals^N$, a slip distribution $\smash[t]{b\in\HH^{1/2}_0(\Gamma)}$ and 
a sequence of partitions $\mathcal{T}_{\mathcal{H}}$ such that for all~$h\in\mathcal{H}$, 
the discrete trace inequality~\EQ{DiscrTraceIneq} holds for all $s\in\mathcal{S}_h$ and all element-wise polynomial 
functions on~$\mathcal{T}_h$ and
\begin{equation}
\label{eqn:boundonb}
\|b\|_{\mathcal{T}_h,\Gamma}:=\bigg(\sum_{s\in\mathcal{S}_h}h_s^{-1}\|b\|_s^2\bigg)^{1/2}<\infty
\end{equation}
with $h_s$ the harmonic average of the diameters of the elements adjacent to~$s$, 
\begin{equation}
\label{eqn:hsdef}
\frac{1}{h_s}=\frac{1}{\diam(\kappa^+_s)}+\frac{1}{\diam(\kappa^-_s)}
\end{equation}
Then for all $h\in\mathcal{H}$, the linear form $(b,\mean{\sigma_{\nu}(\cdot)})_{\Gamma}:\VV_h\to\reals$ is continuous and
\begin{equation}
\label{eqn:lDbounded}
\big|(b,\mean{\sigma_{\nu}(v)})_{\Gamma}\big|
\leq{}
2^{-1/2}\,\bcA{}C_{\Gamma}M^{1/2}\,\|b\|_{\mathcal{T}_h,\Gamma}\,\|v\|_{1,\Omega}
\end{equation}
with $\bcA$ the continuity constant of the elasticity tensor in~\EQ{Abp}, $C_{\Gamma}$ the constant in the
discrete trace inequality~\EQ{DiscrTraceIneq} and~$M$ the maximum multiplicity of the multiset 
$\{\kappa\in\{\kappa^+_s,\kappa_s^-\}:s\in\mathcal{S}_h\}$.
\end{lemma}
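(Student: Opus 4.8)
The plan is to bound the functional $(b,\mean{\sigma_{\nu}(v)})_{\Gamma}$ by decomposing the integral over~$\Gamma$ into the contributions of the segments $s\in\mathcal{S}_h$, and on each segment applying Cauchy--Schwarz in the slip factor and the discrete trace inequality~\EQ{DiscrTraceIneq} in the traction factor. First I would write $(b,\mean{\sigma_{\nu}(v)})_{\Gamma}=\sum_{s\in\mathcal{S}_h}\int_s b\cdot\mean{\sigma_{\nu}(v)}$ and, on each~$s$, apply the Cauchy--Schwarz inequality in $\LL^2(s)$ to get $|\int_s b\cdot\mean{\sigma_{\nu}(v)}|\leq\|b\|_s\,\|\mean{\sigma_{\nu}(v)}\|_s$. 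Since $\mean{\sigma_{\nu}(v)}=\tfrac12(\sigma_{\nu}^+(v)+\sigma_{\nu}^-(v))$, the triangle inequality and the bound~\EQ{Abp} on the elasticity tensor give $\|\mean{\sigma_{\nu}(v)}\|_s\leq\tfrac12\bcA(\|\grad v\|_{s}^{+}+\|\grad v\|_{s}^{-})$, where the superscripts indicate the trace taken from~$\kappa_s^{\pm}$. Here $\grad v$ restricted to an element is itself element-wise polynomial of degree at most~$p-1\leq p$, so the discrete trace inequality applies: $(\diam\kappa_s^{\pm})^{1/2}\|\grad v\|_s^{\pm}\leq C_{\Gamma}\|\grad v\|_{\kappa_s^{\pm}}$.

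Next I would insert the weights $h_s^{1/2}$ to match the definition~\EQ{boundonb}. Writing $\|b\|_s=(h_s^{1/2}\|b\|_s)\cdot h_s^{-1/2}$ and collecting, the sum becomes $\sum_s(h_s^{-1/2}\|b\|_s)\,h_s^{1/2}\,\|\mean{\sigma_{\nu}(v)}\|_s$, and a discrete Cauchy--Schwarz over the segments bounds this by $\|b\|_{\mathcal{T}_h,\Gamma}\,\big(\sum_s h_s\|\mean{\sigma_{\nu}(v)}\|_s^2\big)^{1/2}$. The factor $h_s$ is the harmonic average~\EQ{hsdef}, so $h_s\leq\diam\kappa_s^{\pm}$ for each of the two adjacent elements; combining with the discrete trace inequality on each of $\kappa_s^{+}$ and $\kappa_s^{-}$ gives $h_s\|\mean{\sigma_{\nu}(v)}\|_s^2\leq\tfrac14\bcA^2 C_{\Gamma}^2(\|\grad v\|_{\kappa_s^+}^2+\|\grad v\|_{\kappa_s^-}^2)$ up to the combinatorial bookkeeping of the cross term — this is where one uses $h_s^{1/2}\leq(\diam\kappa_s^{\pm})^{1/2}$ separately against each trace contribution, so that each of the two terms of $\mean{\cdot}$ is controlled by the element on its own side, and the $\tfrac12$ from the average together with the triangle inequality produces the constant $2^{-1/2}$ in~\EQ{lDbounded} after the square root.

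Finally I would sum the element contributions. The key point is that each element $\kappa$ can appear as $\kappa_s^{+}$ or $\kappa_s^{-}$ for several segments~$s$, but by hypothesis the multiplicity is bounded by~$M$; hence $\sum_{s\in\mathcal{S}_h}(\|\grad v\|_{\kappa_s^+}^2+\|\grad v\|_{\kappa_s^-}^2)\leq M\sum_{\kappa\in\mathcal{T}_h}\|\grad v\|_{\kappa}^2=M|v|_{1,\Omega}^2\leq M\|v\|_{1,\Omega}^2$. Assembling the constants yields exactly the bound~$2^{-1/2}\bcA C_{\Gamma}M^{1/2}\|b\|_{\mathcal{T}_h,\Gamma}\|v\|_{1,\Omega}$, and continuity of the linear form on $\VV_h$ follows since $\|b\|_{\mathcal{T}_h,\Gamma}<\infty$ by~\EQ{boundonb}. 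I expect the main obstacle to be the careful bookkeeping in the middle step: one must apportion the weight $h_s$ and the two halves of the average to the correct adjacent elements so that the discrete trace inequality is invoked only on genuinely element-wise polynomial functions and so that the final constant is the sharp $2^{-1/2}$ rather than a cruder $1$ or $\sqrt2$; the role of $h_s$ being the \emph{harmonic} (rather than, say, minimum) average is precisely what makes the two-sided split go through cleanly, since $1/h_s=1/\diam\kappa_s^++1/\diam\kappa_s^-$ distributes additively over the two trace terms.
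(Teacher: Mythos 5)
Your proposal is correct and follows essentially the same route as the paper's proof: segment-wise decomposition of the integral over $\Gamma$, Cauchy--Schwarz and the bound~\EQ{Abp} to reduce the average traction to the two one-sided traces of $\grad v$, the discrete trace inequality~\EQ{DiscrTraceIneq} on each adjacent element, the harmonic-average definition~\EQ{hsdef} to absorb the two element diameters, a discrete Cauchy--Schwarz over the segments, and the multiplicity bound $M$ to collapse the element sums onto $|v|_{1,\Omega}$. The only cosmetic difference is the order in which you apply the trace inequality relative to the segment-level Cauchy--Schwarz, and your bookkeeping of the cross term (turning the nominal $\tfrac14$ into $\tfrac12$ via $(x+y)^2\leq 2(x^2+y^2)$) lands on the same constant $2^{-1/2}$ as the paper.
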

\begin{remark}
The maximum multiplicity~$M$ in Lemma~\ref{thm:continuity} indicates the maximum number of occurrences of any one element in connection to any 
segment~$s\in\mathcal{S}_h$ as a member of the set~$\{\kappa^+_s,\kappa_s^-\}$. For instance, in Figure~\FIG{segments}, the 
element $\kappa_a^+=\kappa_a^-=\kappa_b^+$ has multiplicity $3$. One can infer that $M$ is bounded by the maximum number of faces
of any element in the mesh, increased by $2$ for interior segments.
\end{remark}
\begin{proof}
We first separate the integral on~$\Gamma$ into a sum of contributions
from the segments and apply~\EQ{Abp} and the Cauchy-Schwarz inequality to obtain
\begin{equation}
\label{eqn:intsplit}
\begin{aligned}
\big|(b,\mean{\sigma_{\nu}(v_h)})_{\manifold}\big|
&=
\bigg|
2^{-1}
\sum_{s\in{S}_h}
\big(b,\nu\cdot\gamma^+_{\Gamma}(\sigma(v_h))+\nu\cdot\gamma^-_{\Gamma}(\sigma(v_h))\big)_s\bigg|
\\
&\leq
2^{-1}\bcA
\sum_{s\in{S}_h}
\|b\|_s\Big(\big\|\gamma^+_{\Gamma}(\grad{}v_h)\big\|_s+\big\|\gamma^-_{\Gamma}(\grad{}v_h)\big\|_s\Big)
\end{aligned}
\end{equation}
where $\gamma_{\Gamma}^{\pm}(\cdot)$ denotes the trace of $(\cdot)$ from within $\Omega^{\pm}$.
Noting that~$v_h\in\VV_h$ is element-wise polynomial, we deduce from the discrete trace inequality~\EQ{DiscrTraceIneq}, the
arithmetic-geometric mean inequality and~\EQ{hsdef}: 
\begin{equation}
\label{eqn:intsplit2}
\begin{aligned}
\big|(b,\mean{\sigma_{\nu}(v_h)})_{\manifold}\big|
&\leq
2^{-1}\bcA{}C_{\Gamma}
\sum_{s\in{S}_h}
h_s^{-1/2}
\|b\|_s\big(\|\grad{}v_h\|_{\kappa_s^+}+\|\grad{}v_h\|_{\kappa_s^-}\big)
\\
&\leq
2^{-1/2}\,\bcA{}C_{\Gamma}
\bigg(
\sum_{s\in{S}_h}
h_s^{-1}
\|b\|_s^2\bigg)^{1/2}
\bigg(
\sum_{s\in{S}_h}
\|\grad{}v_h\|_{\kappa_s^+}^2
+
\|\grad{}v_h\|_{\kappa_s^-}^2\bigg)^{1/2}
\\
&\leq
2^{-1/2}\,\bcA{}C_{\Gamma}M^{1/2}
\,\|b\|_{\mathcal{T}_h,\Gamma}\,
\|v_h\|_{1,\Omega}
\end{aligned}
\end{equation}

\end{proof}

To determine the global approximation properties of the WSM formulation in the $\HH^1$-norm, 
we note that the WSM formulation is \emph{inconsistent}: The solution of the weak formulation of the 
Volterra dislocation problem~\EQ{weak_varform} violates the WSM weak form~\EQ{wsm} by 
$(b,\mean{\sigma_{\nu}(v)})_{\Gamma}$, for all $v\in\VV_h$. The second Strang lemma~\cite[Lemma 2.25]{ErnGuermond2004} then provides the
following characterization of the global approximation properties of WSM:
\begin{theorem}[Global approximation properties of WSM in the $\HH^1$-norm]
\label{thm:Strang2bound}
Assume that the conditions of Lemma~\ref{thm:continuity} hold. Let $u\in\HH^1_{0,\dirichlet}(\Omega\setminus\Gamma)$ 
denote the solution to the Volterra dislocation problem~\EQ{weak_varform} and let $u_h\in\VV_h$ denote its
WSM approximation according to~\EQ{wsm}. It holds that
\begin{align}
\label{eqn:Strang2bound}
\|u-u_h\|_{1,\Omega\setminus\Gamma}
&\leq
\big(1+C_P\ucA^{-1}\bcA\big)\inf_{v_h\in\VV_h}\|u-v_h\|_{1,\Omega\setminus\Gamma} \nonumber\\
&\quad +
C_P\ucA^{-1}\sup_{v_h\in\VV_h\setminus\{0\}}\frac{|(b,\mean{\sigma_{\nu}(v_h)})_{\Gamma}|}{\|v_h\|_{1,\Omega}}
\end{align}
\end{theorem}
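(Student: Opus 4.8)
The plan is to apply the second Strang lemma in the form stated in \cite[Lemma~2.25]{ErnGuermond2004}, using the functional-analytic setup already assembled in the excerpt. The abstract ingredients required are: a Hilbert space $V$ with norm $\|\cdot\|$, a continuous coercive bilinear form $a(\cdot,\cdot)$ on $V$, a continuous linear form $l(\cdot)$, a finite-dimensional subspace $V_h\subset V$, a discrete problem whose bilinear form is the restriction of $a$ but whose right-hand side $l_h$ differs from $l$, and an exact solution $u\in V$ (not necessarily in $V_h$). Here I would take $V=\HH^1_{0,\dirichlet}(\domain)$ equipped with $\|\cdot\|_{1,\domain}$, $a(\cdot,\cdot)$ as in~\EQ{a_restrict}, $V_h=\VV_h$, and the WSM right-hand side $l_h(v_h)=l(v_h)-(b,\mean{\sigma_{\nu}(v_h)})_{\Gamma}$ from~\EQ{wsm}. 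The one subtlety is that the exact solution $u$ does not live in $V$ but in $\HH^1_{0,\dirichlet}(\domain\setminus\Gamma)$, which is exactly why the estimate is phrased in terms of $\|\cdot\|_{1,\domain\setminus\Gamma}$ and why a consistency error appears; I would note that $a$ extends to $\ab$ on $\HH^1(\domain\setminus\Gamma)$ and carry the argument there.

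The key steps, in order, are as follows. First, record the coercivity and continuity constants: by~\EQ{strongpositive} and the Poincar\'e inequality~\EQ{Poincare}, $a(\cdot,\cdot)$ is coercive on $V$ with constant $C_P^{-1}\ucA$ (more precisely $\ab(u,u)\geq\ucA|u|_{1,\domain\setminus\Gamma}^2\geq\ucA C_P^{-1}\|u\|_{1,\domain\setminus\Gamma}|u|_{1,\domain\setminus\Gamma}$, giving coercivity constant $\ucA/C_P$ after the standard manipulation), and continuous with constant $\bcA$ by~\EQ{aGcontinuous}. Second, identify the consistency error: for the exact solution $u$ of~\EQ{weak_varform} and any $v_h\in\VV_h$, the variational identity~\EQ{varid1} combined with the collapsing-the-lift computation~\EQ{almostwsm}--\EQ{liftlimit} (equivalently, the derivation in Section~\ref{sec:wsmderivation2}) shows that $\ab(u,v_h)-l_h(v_h)=(b,\mean{\sigma_{\nu}(v_h)})_{\Gamma}$, so the residual functional is precisely $v_h\mapsto(b,\mean{\sigma_{\nu}(v_h)})_{\Gamma}$. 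Third, invoke Lemma~\ref{thm:continuity} to guarantee that this residual functional is continuous on $\VV_h$, which is the hypothesis the Strang lemma needs for the supremum term to be finite. Fourth, assemble the abstract Strang bound
\begin{equation*}
\|u-u_h\|_{1,\domain\setminus\Gamma}
\leq
\Big(1+\frac{\bcA}{\ucA/C_P}\Big)\inf_{v_h\in\VV_h}\|u-v_h\|_{1,\domain\setminus\Gamma}
+\frac{1}{\ucA/C_P}\sup_{v_h\in\VV_h\setminus\{0\}}\frac{|\ab(u,v_h)-l_h(v_h)|}{\|v_h\|_{1,\domain}},
\end{equation*}
and substitute the residual identity from the second step; this is exactly~\EQ{Strang2bound}. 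One bookkeeping point worth flagging in the write-up: the denominator in the supremum is $\|v_h\|_{1,\domain}$ (the $V$-norm) while the error on the left is measured in $\|\cdot\|_{1,\domain\setminus\Gamma}$; since $v_h$ is continuous these two norms agree on $\VV_h$, so there is no inconsistency, but it is cleanest to say so explicitly.

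I do not expect a genuine obstacle here — this is a textbook application of the second Strang lemma once the pieces are in place. The only place that needs a little care, rather than difficulty, is the consistency-error identity: one must verify that passing from the (formal) collapsing-the-lift limit~\EQ{liftlimit} to the statement ``$u$ satisfies~\EQ{wsm} up to $(b,\mean{\sigma_{\nu}(v_h)})_{\Gamma}$'' is rigorous for the genuine weak solution $u$ and for \emph{piecewise-polynomial} test functions $v_h$ rather than smooth ones. This is where the trace/traction theory of Section~\ref{sec:TracTrac} and the discrete trace inequality~\EQ{DiscrTraceIneq} (via Lemma~\ref{thm:continuity}) do the real work: they give meaning to $\mean{\sigma_{\nu}(v_h)}$ restricted to segments of $\Gamma$ and show the pairing $(b,\mean{\sigma_{\nu}(v_h)})_{\Gamma}$ is well defined and bounded. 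With that in hand, the theorem follows immediately.
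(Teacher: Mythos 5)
Your proposal is correct and follows essentially the same route as the paper: the paper likewise invokes the second Strang lemma (in fact it spells out its standard proof inline --- coercivity of $a_{\Gamma}$ on $\VV_h$, the splitting $a_{\Gamma}(u_h-v_h,u_h-v_h)=a_{\Gamma}(u_h-u,u_h-v_h)+a_{\Gamma}(u-v_h,u_h-v_h)$, and the triangle inequality), with the same identification of the consistency residual as $(b,\mean{\sigma_{\nu}(\cdot)})_{\Gamma}$ and the same constants. The only cosmetic difference is that you route the residual identity through the collapsing-the-lift computation, whereas it follows immediately from \EQ{weak_varform} together with the definition of the WSM right-hand side in \EQ{wsm}.
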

\begin{proof}
We first recall that the bilinear form $a_{\Gamma}(\cdot,\cdot)$ is bounded on $\HH^1_{0,\dirichlet}(\Omega\setminus\Gamma)\times\VV_h$ and
coercive on $\VV_h\times\VV_h$; see~\EQ{aGcontinuouscoercive} and~\EQ{Poincare}. For arbitrary $v_h\in\VV_h$, we have
the chain of inequalities:
\begin{equation}
\begin{aligned}
\|u_h-v_h\|_{1,\Omega\setminus\Gamma}^2&\leq{}C_P\ucA^{-1}{}|a_{\Gamma}(u_h-v_h,u_h-v_h)|
\\
&=
C_P\ucA^{-1}|a_{\Gamma}(u_h-u,u_h-v_h)+a_{\Gamma}(u-v_h,u_h-v_h)|
\\
&\leq
C_P\ucA^{-1}\big(|(b,\mean{\sigma_{\nu}(u_h-v_h)})_{\Gamma}|
+
\bcA|u-v_h|_{1,\Omega\setminus\Gamma}|u_h-v_h|_{1,\Omega\setminus\Gamma}\big)
\end{aligned}
\end{equation}
which leads to
\begin{equation}
\begin{aligned}
\|u_h-v_h\|_{1,\Omega\setminus\Gamma}
\leq
C_P\ucA^{-1}\sup_{w_h\in\VV_h\setminus\{0\}}\frac{|(b,\mean{\sigma_{\nu}(w_h)})_{\Gamma}|}{\|w_h\|_{1,\Omega}}
+
C_P\ucA^{-1}\bcA\|u-v_h\|_{1,\Omega\setminus\Gamma}
\end{aligned}
\end{equation}
The bound~\EQ{Strang2bound} then follows from the triangle inequality.
\end{proof}

For quasi-uniform meshes,
Lemma~\ref{thm:continuity} and Theorem~\ref{thm:Strang2bound} lead to a simple asymptotic characterization of the global
approximation properties of WSM in the $\HH^1$-norm. This characterization is detailed in the following corollary:
\begin{corollary}
\label{thm:CorH1global}
Assume that the conditions of Lemma~\ref{thm:continuity} hold and that the sequence of partitions $\mathcal{T}_{\mathcal{H}}$ 
is quasi-uniform with respect
to the mesh parameter, i.e., for all $h\in\mathcal{H}$ there exist constants $\overline{C}>0$ and $\underline{C}>0$ independent of $h$
such that $\underline{C}h\leq\mathrm{diam}(\kappa)\leq\overline{C}h$ for all $\kappa\in\mathcal{T}_h$. It then holds that
\begin{equation}
\|u-u_h\|_{1,\Omega\setminus\Gamma}\leq\const{}h^{-1/2}
\end{equation}
as $h\to{}0$.
\end{corollary}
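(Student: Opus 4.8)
The plan is to read off the estimate directly from Theorem~\ref{thm:Strang2bound} and then bound the two terms on its right-hand side separately, using quasi-uniformity only to control the mesh-dependent quantities that appear. The first term, $\bigl(1+C_P\ucA^{-1}\bcA\bigr)\inf_{v_h\in\VV_h}\|u-v_h\|_{1,\Omega\setminus\Gamma}$, is harmless for the purpose of an upper bound: since $0\in\VV_h$, the infimum is at most $\|u\|_{1,\Omega\setminus\Gamma}$, which is a finite constant independent of~$h$ because $u\in\HH^1_{0,\dirichlet}(\Omega\setminus\Gamma)$. (This crude step is, of course, exactly where the non-density of continuous spaces in $\HH^1_{0,\dirichlet}(\Omega\setminus\Gamma)$ bites — one cannot hope for this infimum to decay — but since the corollary only asserts an upper bound of order $h^{-1/2}$, a constant suffices here.) It therefore remains to show that the consistency term $\sup_{v_h\in\VV_h\setminus\{0\}}|(b,\mean{\sigma_{\nu}(v_h)})_{\Gamma}|/\|v_h\|_{1,\Omega}$ is $O(h^{-1/2})$.

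For that term I would invoke Lemma~\ref{thm:continuity}. Its hypotheses hold: the discrete trace inequality~\EQ{DiscrTraceIneq} is assumed in the statement, and the finiteness condition~\EQ{boundonb} on $\|b\|_{\mathcal{T}_h,\Gamma}$ follows from $b\in\HH^{1/2}_0(\Gamma)\subset\LL^2(\Gamma)$ together with the estimate on $\|b\|_{\mathcal{T}_h,\Gamma}$ derived below. The bound~\EQ{lDbounded} then gives $\sup_{v_h}|(b,\mean{\sigma_{\nu}(v_h)})_{\Gamma}|/\|v_h\|_{1,\Omega}\leq 2^{-1/2}\bcA C_{\Gamma}M^{1/2}\,\|b\|_{\mathcal{T}_h,\Gamma}$, so the whole matter reduces to two facts: (i) the maximum multiplicity~$M$ is bounded independently of~$h$, and (ii) $\|b\|_{\mathcal{T}_h,\Gamma}\leq\const\,h^{-1/2}\|b\|_{\Gamma}$. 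For~(i) I would simply appeal to the Remark following Lemma~\ref{thm:continuity}: $M$ is bounded by the maximal number of faces of an element, plus two, which is uniformly bounded under the standing shape-regularity assumptions on~$\mathcal{T}_{\mathcal{H}}$. For~(ii), quasi-uniformity enters through~\EQ{hsdef}: since $\diam(\kappa)\geq\underline C h$ for every element, one has $h_s^{-1}=\diam(\kappa_s^+)^{-1}+\diam(\kappa_s^-)^{-1}\leq 2(\underline C h)^{-1}$ for every segment $s\in\mathcal{S}_h$, while the bounded-overlap property of the cover yields $\sum_{s\in\mathcal{S}_h}\|b\|_s^2=\int_{\Gamma}|b(x)|^2\,\#\{s\in\mathcal{S}_h:x\in s\}\,\mathrm{d}x\leq M'\|b\|_{\Gamma}^2$, with $M'$ a mesh-regularity constant. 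Multiplying the two gives $\|b\|_{\mathcal{T}_h,\Gamma}^2\leq 2M'(\underline C h)^{-1}\|b\|_{\Gamma}^2$, which is (ii) and also supplies the finiteness~\EQ{boundonb} used above.

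Assembling: Theorem~\ref{thm:Strang2bound} now reads $\|u-u_h\|_{1,\Omega\setminus\Gamma}\leq\const_1+\const_2\,h^{-1/2}$ with $\const_1,\const_2$ independent of~$h$. Since $\mathcal{H}$ is a decreasing sequence with largest element $h_1$, we have $h^{-1/2}\geq h_1^{-1/2}$ for every $h\in\mathcal{H}$, hence $\const_1\leq \const_1 h_1^{1/2}h^{-1/2}$, and the bound collapses to $\|u-u_h\|_{1,\Omega\setminus\Gamma}\leq\const\,h^{-1/2}$, as claimed. The only step I regard as more than routine bookkeeping is the bounded-overlap claim for $\mathcal{S}_h$, i.e.\ that $\#\{s\in\mathcal{S}_h:x\in s\}$ is uniformly bounded; this follows because a point $x\in\Gamma$ belongs to the interior-type segment $\Gamma\cap\kappa$ for each element $\kappa$ containing~$x$, and to a boundary-type segment for each mesh face through~$x$, and both counts are controlled by shape-regularity — but it deserves to be spelled out explicitly. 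Everything else is a direct assembly of Lemma~\ref{thm:continuity}, Theorem~\ref{thm:Strang2bound}, and the definition of quasi-uniformity.
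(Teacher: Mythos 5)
Your proposal is correct and follows essentially the same route as the paper: bound $\|b\|_{\mathcal{T}_h,\Gamma}\leq\const\,h^{-1/2}\|b\|_{\Gamma}$ via quasi-uniformity and \EQ{hsdef}, feed this through Lemma~\ref{thm:continuity} into the consistency term of Theorem~\ref{thm:Strang2bound}, and absorb the $O(1)$ best-approximation term into the $O(h^{-1/2})$ bound. The only cosmetic difference is that you insert a bounded-overlap constant $M'$ in $\sum_{s}\|b\|_s^2$, whereas the paper treats $\mathcal{S}_h$ as a partition of $\Gamma$ up to measure zero so that this sum equals $\|b\|_{\Gamma}^2$ exactly; your extra care is harmless.
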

\begin{proof}
Subject to the quasi-uniformity condition on the sequence of partitions, we have
\begin{equation}
\label{eqn:Chb1}
\begin{aligned}
\|b\|_{\mathcal{T}_h,\Gamma}
=
\bigg(\sum_{s\in\mathcal{S}_h}h_s^{-1}\|b\|_s^2\bigg)^{1/2}
\leq
2\underline{C}^{-1/2}h^{-1/2}\bigg(\sum_{s\in\mathcal{S}_h}\|b\|_s^2\bigg)^{1/2}
=
2\underline{C}^{-1/2}h^{-1/2}\|b\|_{\Gamma}
\end{aligned}
\end{equation}
It then follows from~\EQ{intsplit2} and~\EQ{Strang2bound} that
\begin{equation}
\begin{aligned}
\|u-u_h\|_{1,\Omega\setminus\Gamma}
&\leq
\big(1+C_P\ucA^{-1}\bcA\big)\inf_{v_h\in\VV_h}\|u-v_h\|_{1,\Omega\setminus\Gamma} \\
&\quad +
2^{-1/2}
C_P\ucA^{-1}\bcA{}C_{\Gamma}M^{1/2}\|b\|_{\mathcal{T}_h,\Gamma}
\\
&\leq
\const+\const{}h^{-1/2}
\end{aligned}
\end{equation}
and the assertion follows in the limit $h\to{}0$.
\end{proof}

The potential divergence of the bound in Corollary~\ref{thm:CorH1global} does of course not immediately
imply that the error in the WSM approximation itself increases as $h\to{}0$. However, Theorem~\ref{thm:ANWSM} below asserts that $\|u-u_h\|_{\Omega}$ vanishes as
$h\to{}0$. From this result, the continuity of $u_h$ and the discontinuity of $u$, one can infer that, 
indeed, $\|u-u_h\|_{1,\Omega\setminus\Gamma}$ must diverge as $h\to{}0$. 

Theorem~\ref{thm:Strang2bound} conveys that for each $h>0$, it holds that $u-u_h\in\LL^2(\Omega)$. Based on the Aubin-Nitsche Lemma,
we can can therefore construct an estimate of the error in the WSM approximation in the $\LL^2$\nobreakdash-norm. The estimate is formulated
in Theorem~\ref{thm:ANWSM} below. Some auxiliary conditions on the domain~$\Omega$ are required, as specified in the premises of the theorem.
\begin{theorem}[Global approximation properties of WSM in the $\LL^2$-norm]
\label{thm:ANWSM}
Assume that the conditions of Lemma~\ref{thm:continuity} hold. In addition, assume that $\Omega$ is convex or of class $C^2$. 
Let $u$ denote the solution to~\EQ{weak_varform} and let $u_h\in\VV_h$ denote the WSM approximation according to~\EQ{wsm}. 
Let $\TT_h$ denote the intersection of $\gamma_{\Gamma}(\VV_h)$ with~$\HH^{1/2}_0(\Gamma)$. 
It holds that
\begin{equation}
\label{eqn:Thm53}
\begin{aligned}
\big|(u-u_h,\varphi)\big|
&\leq
\const\Big(|u-u_h|_{1,\Omega\setminus\Gamma}+\|b\|_{\mathcal{T}_h,\Gamma}\Big)\inf_{v_h\in\VV_h}|\zvp-v_h|_{1,\Omega}
\\
&\quad
+
\const
\inf_{\lambda_h\in\TT_h}
\Big(\|\vp\|_{\Omega}\big\|b-\lambda_h\big\|_{\Gamma}
+ \cdots \\
& \phantom{\quad +} \quad \big(\|\lambda_h\|_{1/2,\Gamma}
+
\|\lambda_h\|_{\mathcal{T}_h,\Gamma}\big)
\inf_{w_h\in\VV_h}
|\zvp-w_h|_{1,\Omega\setminus\Gamma}\Big)
\end{aligned}
\end{equation}
for arbitrary $\vp\in\LL^2(\Omega)$ and a corresponding $\zvp\in\HH^2(\Omega)\cap\HH^1_{0,\dirichlet}(\Omega)$ such that
$\|\zvp\|_{2,\Omega}\leq\const\|\vp\|_{\Omega}$, with~$\|\cdot\|_{\mathcal{T}_h,\Gamma}$ defined by~\EQ{boundonb}.
\end{theorem}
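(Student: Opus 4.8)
The plan is to use the Aubin--Nitsche duality argument adapted to the inconsistent, nonconforming WSM setting. Given $\vp\in\LL^2(\Omega)$, I introduce the dual solution $\zvp\in\HH^2(\Omega)\cap\HH^1_{0,\dirichlet}(\Omega)$ solving the adjoint elasticity problem $a(w,\zvp)=(w,\vp)_{\Omega}$ for all $w\in\HH^1_{0,\dirichlet}(\Omega)$; the convexity or $C^2$ assumption on $\Omega$, together with the $C^1$ smoothness of the elasticity tensor, gives the elliptic-regularity bound $\|\zvp\|_{2,\Omega}\leq\const\|\vp\|_{\Omega}$. The key identity is obtained by writing $(u-u_h,\vp)=a(u-u_h,\zvp)$, but this must be handled carefully because $u-u_h\notin\HH^1_{0,\dirichlet}(\Omega)$ --- only $u$ lives in the broken space. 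The strategy is to expand $a_{\Gamma}(u-u_h,\zvp-v_h)$ for an arbitrary $v_h\in\VV_h$, using on one hand the definition of the dual problem (Galerkin-type subtraction in the conforming variable) and on the other hand the WSM equation~\EQ{wsm} together with the fact that $u$ solves~\EQ{weak_varform}.

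First I would establish the error representation. Since $\zvp$ is smooth, integration by parts and the strong form give, for any $w\in\HH^1_{0,\dirichlet}(\Omega\setminus\Gamma)$, an identity of the form $a_{\Gamma}(w,\zvp)=(w,\vp)_{\Omega}+(\jump{w},\sigma_{\nu}(\zvp))_{\Gamma}$ --- the boundary contribution on $\Gamma$ survives precisely because $w$ may jump. Applying this with $w=u-u_h$ and recalling $\jump{u}=b$, $\jump{u_h}=0$, yields
\begin{equation*}
(u-u_h,\vp)_{\Omega}=a_{\Gamma}(u-u_h,\zvp)-(b,\sigma_{\nu}(\zvp))_{\Gamma}.
\end{equation*}
Next, for any $v_h\in\VV_h$, Galerkin orthogonality for the \emph{primal} WSM problem gives $a(u_h,v_h)=l(v_h)-(b,\mean{\sigma_{\nu}(v_h)})_{\Gamma}$, while the weak formulation gives $a_{\Gamma}(u,v_h)=l(v_h)$ (here $v_h$ is continuous, so the fault terms from the integration-by-parts identity collapse to $(b,\mean{\sigma_{\nu}(v_h)})_{\Gamma}$ only through the primal residual). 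Subtracting, $a_{\Gamma}(u-u_h,v_h)=(b,\mean{\sigma_{\nu}(v_h)})_{\Gamma}$ for all $v_h\in\VV_h$. Combining with the representation above and inserting $\zvp-v_h$,
\begin{equation*}
(u-u_h,\vp)_{\Omega}=a_{\Gamma}(u-u_h,\zvp-v_h)+(b,\mean{\sigma_{\nu}(v_h)})_{\Gamma}-(b,\sigma_{\nu}(\zvp))_{\Gamma}.
\end{equation*}
The first term is bounded by $\bcA\,|u-u_h|_{1,\Omega\setminus\Gamma}\,|\zvp-v_h|_{1,\Omega}$ via~\EQ{aGcontinuous}; this accounts for the $|u-u_h|_{1,\Omega\setminus\Gamma}$ contribution in~\EQ{Thm53}. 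The residual term $(b,\mean{\sigma_{\nu}(v_h)})_{\Gamma}$ must now be paired with $-(b,\sigma_{\nu}(\zvp))_{\Gamma}$ and a further term of the form $-a_{\Gamma}(u-u_h,\zvp-v_h)$'s residual analogue; the honest route is to also subtract the residual identity applied to a near-best $v_h$, producing the mixed term $\|b\|_{\mathcal{T}_h,\Gamma}\inf_{v_h}|\zvp-v_h|_{1,\Omega}$ through Lemma~\ref{thm:continuity} applied with $v=\zvp-v_h$ (legitimate once we note $\zvp\in\HH^2$ so its trace gradient is controlled, or by first replacing $\zvp$ by a piecewise-polynomial interpolant).

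The subtle part --- and the main obstacle --- is the difference $(b,\mean{\sigma_{\nu}(v_h)})_{\Gamma}-(b,\sigma_{\nu}(\zvp))_{\Gamma}$, because $b$ need not lie in any space with enough regularity to pair cleanly, and $\zvp$'s traction is only in $\LL^2(\Gamma)$ rather than piecewise polynomial. The plan is to introduce a discrete slip surrogate $\lambda_h\in\TT_h=\gamma_{\Gamma}(\VV_h)\cap\HH^{1/2}_0(\Gamma)$ and split $b=\lambda_h+(b-\lambda_h)$. For the $b-\lambda_h$ part, one writes $(b-\lambda_h,\mean{\sigma_{\nu}(v_h)}-\sigma_{\nu}(\zvp))_{\Gamma}$ and controls $\mean{\sigma_{\nu}(v_h)}$ by a discrete trace inequality and $\sigma_{\nu}(\zvp)$ by the $\HH^2$-bound, giving a term like $\|\vp\|_{\Omega}\|b-\lambda_h\|_{\Gamma}$ after choosing $v_h$ a good interpolant of $\zvp$. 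For the $\lambda_h$ part, since $\lambda_h$ is itself a trace of a finite-element function, one can lift it to $\gamma_{\Gamma}^{-1}\lambda_h\in\HH^1_{0,\dirichlet}(\Omega)$ and re-run the integration-by-parts identity: $(\lambda_h,\mean{\sigma_{\nu}(v_h)})_{\Gamma}-(\lambda_h,\sigma_{\nu}(\zvp))_{\Gamma}$ becomes a bulk expression $a_{\Gamma}(\gamma_{\Gamma}^{-1}\lambda_h,\,v_h-\zvp)$ plus lower-order volume terms, which is bounded by $(\|\lambda_h\|_{1/2,\Gamma}+\|\lambda_h\|_{\mathcal{T}_h,\Gamma})\,|\zvp-w_h|_{1,\Omega\setminus\Gamma}$ after inserting yet another interpolant $w_h$ and using continuity of the lift. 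Taking the infimum over $\lambda_h\in\TT_h$, over $v_h$, and over $w_h$ --- and absorbing all elasticity and trace constants into $\const$ --- yields~\EQ{Thm53}. The delicate bookkeeping is making sure every pairing involving $b$ or $\lambda_h$ is legitimated either by $\HH^{1/2}_0$-membership or by the discrete trace inequality~\EQ{DiscrTraceIneq}, and that the $\zvp$-interpolation error appears only in the forms $\inf_{v_h}|\zvp-v_h|_{1,\Omega}$ and $\inf_{w_h}|\zvp-w_h|_{1,\Omega\setminus\Gamma}$ that the statement advertises.
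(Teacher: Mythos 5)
Your setup and error representation coincide with the paper's: the same dual problem with elliptic regularity, the same use of the lift/jump identity to produce $(u-u_h,\vp)_{\Omega}=a_{\Gamma}(u-u_h,\zvp-v_h)-(b,\mean{\sigma_{\nu}(\zvp)}-\mean{\sigma_{\nu}(v_h)})_{\Gamma}$, and the same continuity bound for the first term. The divergence --- and the gap --- is in your treatment of the traction-inconsistency term. The paper goes out of its way to flag that this is the hard part, precisely because $\VV_h\not\subset\HH^1_{\divstress}(\Omega)$, and resolves it by introducing an auxiliary Nitsche/SIP projection $\zvp_h\in\hat{\VV}_h$ of the dual solution onto the broken space, splitting via $\zvp_h$ rather than via the slip. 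Your alternative splitting $b=\lambda_h+(b-\lambda_h)$ runs into two concrete obstructions. First, for the $(b-\lambda_h)$ piece you need $\|\mean{\sigma_{\nu}(v_h)}\|_{\Gamma}\leq\const\|\vp\|_{\Omega}$ uniformly in $h$ for your chosen interpolant $v_h$ of $\zvp$. The only trace inequality available on the cut segments $s=\Gamma\cap\kappa$ is the \emph{discrete} one~\EQ{DiscrTraceIneq}, valid for polynomials, which yields only $\|\mean{\sigma_{\nu}(v_h)}\|_{\Gamma}\leq\const h^{-1/2}|v_h|_{1,\Omega}$; upgrading this to an $h$-uniform bound requires comparing $\sigma_{\nu}(v_h)$ to $\sigma_{\nu}(\zvp)$ segment by segment, i.e.\ a continuous trace inequality for $\HH^1$ functions on arbitrary element--fault intersections, which is not among the hypotheses and can fail for badly cut elements. (The loss of $h^{-1/2}$ would still salvage Corollary~\ref{thm:CorL2global}, but it does not prove the theorem as stated.)

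Second, for the $\lambda_h$ piece, the step ``$(\lambda_h,\mean{\sigma_{\nu}(v_h)})_{\Gamma}-(\lambda_h,\sigma_{\nu}(\zvp))_{\Gamma}$ becomes $a_{\Gamma}(\gamma_{\Gamma}^{-1}\lambda_h,v_h-\zvp)$ plus lower-order volume terms'' is where the real work is hidden. Integration by parts against $\zvp$ is legitimate via~\EQ{dualpair}, but integration by parts against $v_h$ must be done element by element, and it generates inter-element face contributions $\sum_F\int_F(\cdot)\cdot\jump{\sigma_n(v_h)}_F$ over \emph{all} interior faces meeting the support of the lift, plus element-wise residuals $\divstress(v_h)$. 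These are neither volume terms nor automatically lower order; controlling them amounts to a residual-type a posteriori argument that you would need to carry out explicitly, and it is also unclear how the $\|\lambda_h\|_{\mathcal{T}_h,\Gamma}$ factor in~\EQ{Thm53} would emerge from your route (in the paper it comes from the penalty term of the Nitsche form). The paper's detour through the broken-space projection exists exactly to absorb the problematic pairing $(\jump{w_h},\mean{\sigma_{\nu}(\cdot)})_{\Gamma}$ into the consistency term of a coercive discrete bilinear form, so that only the discrete trace inequality and C\'ea-type quasi-optimality in $\|\cdot\|_{\hat{\VV}(h)}$ are ever needed. To complete your argument you would either have to add the missing continuous trace inequality on cut elements as a hypothesis and work out the face-residual estimates, or adopt the paper's auxiliary projection.
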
 

\begin{proof}
For arbitrary $\varphi\in\LL^2(\Omega)$, consider the dual problem (in the sense of distributions):
\begin{subequations}
\label{eqn:dualproblem}
\begin{alignat}{2}
-\divstress(\zvp)&=\vp&\qquad&\text{in }\Omega
\label{eqn:dualproblemA}
\\
\zvp&=0&\qquad&\text{on }\dirichlet
\label{eqn:dualdirichlet}
\\
\sigma_n(\zvp)&=0&\qquad&\text{on }\neumann
\label{eqn:dualneumann}
\end{alignat}
\end{subequations}
or, equivalently, in weak form:
\begin{equation}
\label{eqn:dualproblemweak}
z\in\HH^1_{0,\dirichlet}(\Omega):
\qquad
a_{\Gamma}(v,\zvp)=(v,\vp)_{\Omega}
\qquad
\forall{}v\in\HH^1_{0,\dirichlet}(\Omega)
\end{equation}
By virtue of the smoothness conditions on the elasticity tensor~\EQ{smoothelast}, the conditions on the domain,
and the smoothness of the (homogeneous) boundary data in~\EQ{dualdirichlet} and~\EQ{dualneumann}, the dual 
problem~\EQ{dualproblem} possesses an elliptic-regularity property (see, e.g.,~\cite{Evans:2009ph,Brenner:2004fk}).
The regularity property implies that the dual solution resides in $\HH^2(\Omega)\cap\HH^1_{0,\dirichlet}(\Omega)$ and satisfies the 
estimate~$\|\zvp\|_{2,\Omega}\leq\const{}\|\vp\|_{\Omega}$. 

Denoting by $\lift{b}\in\HH^1_{0,\dirichlet}(\Omega\setminus\Gamma)$ a suitable lift of~$b$ such that~$\jump{\lift{b}}=b$, it holds that
$u-\lift{b}\in\HH^1_{0,\dirichlet}(\Omega)$. Moreover, the WSM approximation $u_h$ resides in~$\HH^1_{0,\dirichlet}(\Omega)$.
The dual problem~\EQ{dualproblemweak} therefore gives:
\begin{equation}
\label{eqn:AN1}
\begin{aligned}
(u-u_h,\varphi)_{\Omega}
&=
(u-\lift{b},\varphi)_{\Omega}+(\lift{b},\varphi)_{\Omega}-(u_h,\varphi)_{\Omega}
\\
&=
a_{\Gamma}(u-\lift{b},\zvp)+(\lift{b},\varphi)_{\Omega}-a_{\Gamma}(u_h,\zvp)
\\
&=
a_{\Gamma}(u,\zvp)-a_{\Gamma}(u_h,\zvp)-\big(a_{\Gamma}(\lift{b},\zvp)-(\lift{b},\varphi)_{\Omega}\big)
\end{aligned}
\end{equation}
By means of~\EQ{dualpair}, the term in parenthesis in the ultimate expression can be identified as the weak formulation of
$(b,\mean{\sigma_{\nu}(\zvp)})_{\Gamma}$. However, for~$\zvp\in{}\HH^2(\Omega)\cap\HH^1_{0,\dirichlet}(\Omega)$, the trace theorem asserts
that $\smash[t]{\sigma_{\nu}(\zvp)\in\LL^2(\Gamma)}$ and $(b,\mean{\sigma_{\nu}(\zvp)})_{\Gamma}$ coincides with its extension to a duality pairing.
From the weak form of the Volterra dislocation problem~\EQ{weak_varform} it moreover follows that $a_{\Gamma}(u,\zvp)=l(\zvp)$.
The Galerkin-orthogonality property of the WSM approximation~\EQ{wsm} then yields the following identities:
\begin{equation}
\label{eqn:AN2}
\begin{aligned}
(u-u_h,\vp)_{\Omega}
&=
l(\zvp-v_h)-a_{\Gamma}(u_h,\zvp-v_h)+(b,\mean{\sigma_{\nu}(v_h)})_{\Gamma}-(b,\mean{\sigma_{\nu}(\zvp)})_{\Gamma}
\\
&=
a_{\Gamma}(u-u_h,\zvp-v_h)-(b,\mean{\sigma_{\nu}(\zvp)}-\mean{\sigma_{\nu}(v_h)})_{\Gamma}
\end{aligned}
\end{equation}
which are valid for all $v_h\in\VV_h$. 

For the first term in the final expression in~\EQ{AN2} we can construct an appropriate bound without further digression.
However, the second term, pertaining to the difference between the average traction of $\smash[t]{z\in\HH^2(\Omega)\cap\HH^1_{0,\dirichlet}(\Omega)}$
and the average of the direct evaluation of the traction of the finite-element function $v_h\in\VV_h$, is more difficult to estimate.
The essential complication in constructing an estimate, is that the direct evaluation of the traction of the Galerkin finite-element 
approximation of~\EQ{dualproblemweak} in~$\VV_h$ does not coincide with the weak formulation, because $\VV_h\notin\HH^1_{\divstress}(\Omega)$;
see also~\cite{Brummelen:2012fk}. Approximation results are available for the weak formulation of the traction (see, for instance, \cite{Brezzi:2001fk})
but, to our knowledge, not for the direct formulation. 

To estimate the second term in~\EQ{AN2}, we will use an auxiliary Nitsche-type approximation~\cite{Nitsche:1971fk}
to the dual problem. We consider the broken approximation spaces~$\hat{\VV}_h$ according to~\eqref{eqn:BrokenSpace}.
Moreover, we define $\hat{\VV}(h):=\big(\HH^2(\Omega\setminus\Gamma)\cap\HH^1_{0,\dirichlet}(\Omega\setminus\Gamma)\big)+\hat{\VV}_h$.
We equip $\hat{\VV}(h)$ with the mesh-dependent inner product
\begin{equation}
(u,v)_{\hat{\VV}(h)}=a_{\Gamma}(u,v)+\zeta\sum_{s\in\mathcal{S}_h}h_s^{-1}(\jump{u},\jump{v})_s
\end{equation}
and the corresponding norm $\smash[b]{\|\cdot\|_{\hat{\VV}(h)}}$. We note that the embedding of $\HH^2(\Omega\setminus\Gamma)$ into~$\hat{\VV}(h)$ is continuous,
i.e., $\smash[t]{\|v\|_{\hat{\VV}(h)}\leq\const\|v\|_{2,\Omega\setminus\Gamma}}$ for all $\smash[t]{v\in\HH^2(\Omega\setminus\Gamma)}$. 
We define the bilinear operator $\hat{a}_{\Gamma}:\VV(h)\times\VV_h\to\IR$ according to:
\begin{equation}
\hat{a}_{\Gamma}(u,v)
=
a_{\Gamma}(u,v)-(\jump{v},\mean{\sigma_{\nu}(u)})_{\Gamma}+\theta\sum_{s\in\mathcal{S}_h}(\jump{u},\jump{v})_s
\end{equation}
with $\theta$ a suitable constant. For a suitable choice of the constants $\zeta$ and $\theta$, the bilinear form in the left member of~\EQ{dualnitsche}
is continuous and coercive on $\hat{\VV}_h\times\hat{\VV}_h$ and $\hat{a}_{\Gamma}(u,\cdot):\hat{\VV}_h\to\IR$ represents a continuous linear functional for all
$u\in\VV(h)$. Let ${\zvp}_h\in\hat{\VV}_h$ denote the solution to the following Nitsche-type projection problem:
\begin{equation}
\label{eqn:dualnitsche}
{\zvp}_h\in\hat{\VV}_h:\qquad
\hat{a}_{\Gamma}({\zvp}_h,w_h)
=
\hat{a}_{\Gamma}(\zvp,w_h)
\qquad\forall{}w_h\in\hat{\VV}_h
\end{equation}
By virtue of the continuity and coercivity of $\hat{a}_{\Gamma}$ on $\hat{\VV}_h\times\hat{\VV}_h$ and the continuity of $\hat{a}_{\Gamma}(\zvp,\cdot)$ on~$\hat{\VV}_h$,
the projection problem~\EQ{dualnitsche} defines a unique element ${\zvp}_h\in\hat{\VV}_h$ which satisfies
\begin{equation}
\label{eqn:CeaAN}
\|\zvp-{\zvp}_h\|_{\hat{\VV}(h)}
\leq
\const\inf_{w_h\in\hat{\VV}_h}\|\zvp-w_h\|_{\hat{\VV}(h)}
\leq
\const\inf_{w_h\in\VV_h}|\zvp-w_h|_{1,\Omega}
\end{equation}
The first inequality in~\EQ{CeaAN} is a straightforward consequence of C\'ea's lemma. The second inequality follows
from $\VV_h\subset\hat{\VV}_h$, the continuity of functions in~$\VV_h$ and~\EQ{aGcontinuous}. 

By adding a suitable partition of zero to the second term in~\EQ{AN2} and applying the triangle inequality, we obtain:
\begin{equation}
\label{eqn:AN3}
\big|(b,\mean{\sigma_{\nu}(\zvp)}-\mean{\sigma_{\nu}(v_h)})_{\Gamma}\big|
\leq
\big|(b,\mean{\sigma_{\nu}(\zvp-{\zvp}_h)})_{\Gamma}\big|+\big|(b,\mean{\sigma_{\nu}(v_h-{\zvp}_h)})_{\Gamma}\big|
\end{equation}
For the first term in the right-member of~\EQ{AN3}, we derive from~\EQ{dualnitsche} and~\EQ{CeaAN}:
\begin{equation}
\label{eqn:1stterm0}
\begin{aligned}
\big|(b,\mean{\sigma_{\nu}(\zvp-{\zvp}_h)})_{\Gamma}\big|
&=
\bigg|\big(b-\jump{w_h},\mean{\sigma_{\nu}(\zvp-{\zvp}_h)}\big)_{\Gamma}+a_{\Gamma}(w_h,\zvp-{\zvp}_h) \cdots \\
&\quad -\theta\sum_{s\in\mathcal{S}_h}h_s^{-1}(\jump{w_h},\jump{{\zvp}_h})_s\bigg|
\\
&\leq
\big\|b-\jump{w_h}\big\|_{\Gamma}\big(\|\mean{\sigma_{\nu}(\zvp)}\|_{\Gamma} + \|\mean{\sigma_{\nu}({\zvp}_h)}\|_{\Gamma}\big) \\
&\quad + \bcA|w_h|_{1,\Omega\setminus\Gamma}|\zvp-{\zvp}_h|_{1,\Omega\setminus\Gamma} \\
&\quad + \theta\big\|\jump{w_h}\big\|_{\mathcal{T}_h,\Gamma}
\big\|\jump{{\zvp}_h}\big\|_{\mathcal{T}_h,\Gamma}
\end{aligned}
\end{equation}
for all $w_h\in\hat{\VV}_h$. The trace theorem implies $\|\mean{\sigma_{\nu}(\zvp)}\|_{\Gamma}\leq\const\|\zvp\|_{2,\Omega}\leq\const\|\vp\|_{\Omega}$.
The continuity of $\hat{a}_{\Gamma}$ and~$a_{\Gamma}$ implies that $\|\mean{\sigma_{\nu}({\zvp}_h)}\|_{\Gamma}\leq\const\|{\zvp}_h\|_{\hat{\VV}(h)}$.
Moreover, by virtue of~\EQ{CeaAN} and the continuity of the embedding of $\HH^2(\Omega)$ into~$\smash[t]{\hat{\VV}(h)}$
it holds that $\|{\zvp}_h\|_{\hat{\VV}(h)}\leq\const\|\zvp\|_{\hat{\VV}(h)}\leq\const\|\zvp\|_{2,\Omega}\leq\const\|\vp\|_{\Omega}$. 
Hence, we have $\|\mean{\sigma_{\nu}(\zvp)}\|_{\Gamma}+\|\mean{\sigma_{\nu}({\zvp}_h)}\|_{\Gamma}\leq\const\|\vp\|_{\Omega}$.
Noting that $\smash[t]{\TT_h =\gamma_{\Gamma}(\VV_h)\cap\HH^{1/2}_0(\Gamma)}$ coincides with $\{\jump{w_h}:w_h\in\hat{\VV}_h\}$,
for all $\lambda_h\in{\TT}_h$ there exists a $w_h\in\hat{\VV}_h$ such that
$\jump{w_h}=\lambda_h$ and $|w_h|_{1,\Omega\setminus\Gamma}\leq\const\|\lambda\|_{1/2,\Gamma}$. From~\EQ{CeaAN}--\EQ{1stterm0} we deduce:
\begin{equation}
\label{eqn:1stterm}
\begin{aligned}
\big|(b,\mean{\sigma_{\nu}(\zvp-{\zvp}_h)})_{\Gamma}\big|
&\leq
\const
\Big(\|\vp\|_{\Omega}\big\|b-\lambda_h\big\|_{\Gamma}
+ \cdots \nonumber\\
&\quad \big(\|\lambda_h\|_{1/2,\Gamma}
+
\|\lambda_h\|_{\mathcal{T}_h,\Gamma}\big)
\inf_{w_h\in\VV_h}
|\zvp-w_h|_{1,\Omega\setminus\Gamma}\Big)
\end{aligned}
\end{equation}
for all $\lambda_h\in\TT_h$, with $\|\cdot\|_{\mathcal{T}_h,\Gamma}$ according to~\EQ{boundonb}.
For the second term in the right-member of~\EQ{AN3} we deduce from~\EQ{intsplit2}, the triangle inequality and~\EQ{CeaAN}:
\begin{equation}
\label{eqn:2ndpart}
\begin{aligned}
\big|(b,\mean{\sigma_{\nu}(v_h-{\zvp}_h)})_{\Gamma}\big|
&\leq
2^{-1/2}\bcA{}C_{\Gamma}M^{1/2}\|b\|_{\mathcal{T}_h,\Gamma}|v_h-{\zvp}_h|_{1,\Omega\setminus\Gamma}
\\
&\leq
\const \|b\|_{\mathcal{T}_h,\Gamma}\Big(|\zvp-v_h|_{1,\Omega}+\inf_{w_h\in\VV_h}|\zvp-w_h|_{1,\Omega}\Big)
\end{aligned}
\end{equation}
with $\const$ independent of~$h$.
Collecting the results in~\EQ{AN2}--\EQ{2ndpart} and taking the infimum with respect to $v_h$ and $\lambda_h$, one obtains
the estimate in~\EQ{Thm53}.
\end{proof}

Under slightly stronger conditions on the regularity of the slip distribution~$b$, we can derive from Theorem~\ref{thm:ANWSM} a 
straightforward characterization of the asymptotic approximation properties of WSM in the $\LL^2$-norm 
for quasi-uniform meshes in the limit as $h\to{}0$:
\begin{corollary}
\label{thm:CorL2global}
Assume that the conditions of Corollary~\ref{thm:CorH1global} and
Theorem~\ref{thm:ANWSM} hold and,
moreover,~$b\in\HH^1(\Gamma)\cap\HH^{1/2}_0(\Gamma)$. Let $u$ denote the
solution to~\EQ{weak_varform} and let $u_h\in\VV_h$ denote the WSM
approximation according to~\EQ{wsm}. It holds that
\begin{equation}
\label{eqn:CorL2global}
\|u-u_h\|_{\Omega}\leq\const{}h^{1/2}
\qquad
\end{equation}
as $h\to{}0$.
\end{corollary}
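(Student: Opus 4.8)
The plan is to invoke Theorem~\ref{thm:ANWSM}, bound each of the factors appearing on the right-hand side of~\EQ{Thm53} by an appropriate power of~$h$ times a power of~$\|\vp\|_{\Omega}$, and then recover $\|u-u_h\|_{\Omega}$ on the left by dividing through by~$\|\vp\|_{\Omega}$ and passing to the supremum over $\vp\in\LL^2(\Omega)\setminus\{0\}$. The elliptic-regularity bound $\|\zvp\|_{2,\Omega}\leq\const\|\vp\|_{\Omega}$ supplied by Theorem~\ref{thm:ANWSM} is what converts seminorms of the dual solution into $\|\vp\|_{\Omega}$, and the optimal-interpolation estimate~\EQ{OptimalInterpolation} for the continuous spaces~$\VV_h$, applied with $k=0$ and $m=1$ (so that $l=\min\{p,1\}=1$ for the relevant degrees $p\geq1$), is what produces the powers of~$h$ from the approximation of~$\zvp$.

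First I would dispatch the two terms that do not involve the slip. Since $\zvp\in\HH^2(\Omega)\subset\HH^1(\Omega)$, $\VV_h\subset{}C^0(\overline{\Omega},\IR^N)$ and $\manifold$ is an $N$-null set, we have $\inf_{v_h\in\VV_h}|\zvp-v_h|_{1,\Omega\setminus\Gamma}=\inf_{v_h\in\VV_h}|\zvp-v_h|_{1,\Omega}\leq\const{}h\,|\zvp|_{2,\Omega}\leq\const{}h\,\|\vp\|_{\Omega}$ by~\EQ{OptimalInterpolation}. Corollary~\ref{thm:CorH1global} gives $|u-u_h|_{1,\Omega\setminus\Gamma}\leq\|u-u_h\|_{1,\Omega\setminus\Gamma}\leq\const{}h^{-1/2}$, and the estimate~\EQ{Chb1}, which uses only $b\in\LL^2(\Gamma)$ and quasi-uniformity, gives $\|b\|_{\mathcal{T}_h,\Gamma}\leq\const{}h^{-1/2}\|b\|_{\Gamma}\leq\const{}h^{-1/2}$. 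Hence the first line of~\EQ{Thm53}, divided by $\|\vp\|_{\Omega}$, is bounded by $\const(h^{-1/2}+h^{-1/2})\,h=\const{}h^{1/2}$.

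It remains to control the infimum over $\lambda_h\in\TT_h$ in the second line of~\EQ{Thm53}, and here the extra hypothesis $b\in\HH^1(\Gamma)$ is used. I would take $\lambda_h$ to be a quasi-interpolant of~$b$ in $\TT_h=\gamma_{\Gamma}(\VV_h)\cap\HH^{1/2}_0(\Gamma)$ --- concretely, a Scott--Zhang-type operator acting on the partition $\mathcal{S}_h$ of~$\Gamma$ induced by the mesh and designed to preserve the vanishing of~$b$ on $\partial\Gamma$, equivalently (as noted in the proof of Theorem~\ref{thm:ANWSM}, where $\TT_h=\{\jump{w_h}:w_h\in\hat{\VV}_h\}$) $\lambda_h=\jump{\Pi_h\lift{b}}$ for a bulk lift $\lift{b}\in\HH^1_{0,\dirichlet}(\Omega\setminus\Gamma)$ of~$b$ and a quasi-interpolation $\Pi_h$ onto $\hat{\VV}_h$. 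Standard interpolation theory on the $(N\!-\!1)$-manifold then yields $\|b-\lambda_h\|_{\Gamma}\leq\const{}h\,|b|_{1,\Gamma}\leq\const{}h$ together with the $\HH^{1/2}_0$-stability bound $\|\lambda_h\|_{1/2,\Gamma}\leq\const\,\|b\|_{1/2,\Gamma}\leq\const$; and, since quasi-uniformity gives $h_s\geq\const{}h$ for every $s\in\mathcal{S}_h$ while the segments in $\mathcal{S}_h$ are essentially disjoint (cf.~\EQ{Chb1}), $\|\lambda_h\|_{\mathcal{T}_h,\Gamma}\leq\const{}h^{-1/2}\|\lambda_h\|_{\Gamma}\leq\const{}h^{-1/2}\big(\|b\|_{\Gamma}+\|b-\lambda_h\|_{\Gamma}\big)\leq\const{}h^{-1/2}$. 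Combining these with $\inf_{w_h\in\VV_h}|\zvp-w_h|_{1,\Omega\setminus\Gamma}\leq\const{}h\,\|\vp\|_{\Omega}$ from the previous paragraph, the second line of~\EQ{Thm53} divided by $\|\vp\|_{\Omega}$ is bounded by $\const\big(h+(\const+\const{}h^{-1/2})\,h\big)=\const{}h^{1/2}$. Adding the two contributions and taking the supremum over~$\vp$ gives $\|u-u_h\|_{\Omega}\leq\const{}h^{1/2}$.

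The only genuinely delicate step I anticipate is the construction of the trace interpolant~$\lambda_h$ with simultaneous control of $\|b-\lambda_h\|_{\Gamma}$, of $\|\lambda_h\|_{1/2,\Gamma}$ and of $\|\lambda_h\|_{\mathcal{T}_h,\Gamma}$: one must ensure that it actually lands in $\HH^{1/2}_0(\Gamma)$, i.e. inherits the correct behaviour of~$b$ at~$\partial\Gamma$ --- which for $N=3$ rules out plain nodal interpolation and motivates the quasi-interpolation route above --- and that its $\HH^{1/2}_0(\Gamma)$-stability constant is independent of~$h$. Everything else is a mechanical assembly of estimates already available, and the $O(h^{1/2})$ rate is precisely the net effect of gaining one power of~$h$ from interpolating the regular dual solution against the two $O(h^{-1/2})$ factors carried over from the $\HH^1$-analysis.
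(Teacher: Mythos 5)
Your proposal is correct and follows essentially the same route as the paper: bound each factor in the Aubin--Nitsche estimate \EQ{Thm53}, using $\|\zvp\|_{2,\Omega}\leq\const\|\vp\|_{\Omega}$ and \EQ{OptimalInterpolation} to gain one power of $h$ from the dual solution against the two $O(h^{-1/2})$ factors $|u-u_h|_{1,\Omega\setminus\Gamma}$ and $\|b\|_{\mathcal{T}_h,\Gamma}$, and choosing a $\lambda_h\in\TT_h$ with $\|b-\lambda_h\|_{\Gamma}\leq\const h$, $\|\lambda_h\|_{1/2,\Gamma}\leq\const$ and $\|\lambda_h\|_{\mathcal{T}_h,\Gamma}\leq\const h^{-1/2}$. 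The only difference is cosmetic: the paper takes $\lambda_h^{\ast}$ to be the $\LL^2$-best approximation and simply asserts the two stability bounds, whereas you realise $\lambda_h$ as a Scott--Zhang-type quasi-interpolant and justify them --- a slightly more careful instantiation of the same step.
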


\begin{proof}
Based on the estimate $\|\zvp\|_{2,\Omega}\leq\const\|\vp\|_{\Omega}$, it follows from
standard interpolation theory in Sobolev spaces (see, for instance, \cite{ErnGuermond2004,BrennerScott2002}) that
$\inf_{v_h\in\VV_h}|z-v_h|_{1,\Omega}\leq{}\const{}h\|\vp\|_{\Omega}$.
Similarly, for $b$ in $\HH^1(\Gamma)\cap\HH^{1/2}_0(\Gamma)$, we have
$\inf_{\lambda_h\in\TT_h}\|b-\lambda_h\|_{\Gamma}\leq\const{}h\|b\|_{1,\Gamma}$. 
Setting $\lambda_h^{\ast}=\arginf_{\lambda_h\in\TT_h}\|b-\lambda_h\|_{\Gamma}$, it holds that
$\|\lambda_h^{\ast}\|_{1/2,\Gamma}\leq\const$ and $\|\lambda_h^{\ast}\|_{\mathcal{T}_h,\Gamma}\leq\const{}h^{-1/2}$.
From~\EQ{Chb1} we obtain $\|b\|_{\mathcal{T}_h,\Gamma}\leq\const{}h^{-1/2}$ and,
in turn, it follows from Theorem~\ref{thm:Strang2bound} that $|u-u_h|_{1,\Omega}\leq\const{}h^{-1/2}$ as $h\to{}0$.
Inserting the above estimates into~\EQ{Thm53}, we derive:
\begin{equation}
\label{eqn:L2h1/2}
\begin{aligned}
\|u-u_h\|_{\Omega}
&=
\sup_{\vp\in\LL^2(\Omega)\setminus\{0\}}\frac{|(u-u_h,\vp)_{\Omega}|}{\|\vp\|_{\Omega}}
\\
&\leq
\sup_{\vp\in\LL^2(\Omega)\setminus\{0\}}\frac{1}{\|\vp\|_{\Omega}}
\Big(\const{}h^{-1/2}h\|\vp\|_{\Omega}+\const{}h\|\vp\|_{\Omega}+\big(1+h^{-1/2}\big)h\|\vp\|_{\Omega}\Big)
\end{aligned}
\end{equation}
as $h\to{}0$. The estimate in~\EQ{CorL2global} then follows straightforwardly by combining terms.\hfill
\end{proof}
\begin{remark}
The reinforced regularity condition $b\in\HH^1(\Gamma)\cap\HH^{1/2}_0(\Gamma)$ on the slip distribution
ensures that $\inf_{\lambda_h\in\TT_h}\|b-\lambda_h\|_{\Gamma}\leq\const{}h$ as $h\to{}0$. Noting that
the weaker estimate $\inf_{\lambda_h\in\TT_h}\|b-\lambda_h\|_{\Gamma}\leq\const{}h^{1/2}$ suffices to obtain the result 
in Corollary~\ref{thm:CorL2global}, one is lead to question whether the reinforced regularity condition on~$b$ is
actually necessary. If the condition is dismissed, however, an interpolation estimate in the fractional
Sobolev space $\smash[t]{\HH^{1/2}_0(\Gamma)}$ is required. Interpolation estimates in fractional Sobolev spaces
are technical (see, for instance,~\cite{Dupont:1980fk}) and the particular result required here is to our knowledge not 
available. 
\end{remark}

It is noteworthy that the asymptotic convergence behavior according to Corollary~\ref{thm:CorL2global} is
consistent with the notion that the continuous WSM approximation incurs an $O(1)$ error, pointwise,
in the~$O(h)$ neighborhood of the discontinuity composed of the intersected elements as $h\to{}0$. 
In particular, denoting by $\Gamma_h$ the union of the elements for which the intersection of the fault 
with the closure of the element is non-empty, it holds that 
\begin{equation}
\|1\|_{\Gamma_h}=\big(\meas_N(\Gamma_h)\big)^{1/2}=O(h^{1/2})
\end{equation}
as $h\to{}0$, with $\meas_N(\Gamma_h)$ the $N$-Lebesgue measure of~$\Gamma_h$.

\subsection{Local approximation properties}
\label{sec:locconv}
Next, we consider the local approximation properties of WSM, i.e., on~$\Omega$ excluding a small neighborhood of
the fault. To this end, we will relate the WSM approximation to the standard Galerkin approximation associated with
a locally supported lift, outside the support of the lift.

For arbitrary $\varepsilon>0$, let $\varkappa^{\varepsilon}:=\{x\in\Omega:\mathrm{dist}(x,\varkappa)<\varepsilon\}$
denote the open
$\varepsilon$\nobreakdash-neighborhood of the dislocation fault. We consider a lift
$\lift{b}\in\HH^1_{0,\dirichlet}(\Omega\setminus\Gamma)$ of~$b$ with compact
support in $\varkappa^\varepsilon$. Let $\bar{u}{}_h\in\VV_h$ denote the Galerkin
approximation of the continuous complement of the solution with respect to the
jump lift $\lift{b}$ according to~\EQ{weak_varform1_fem}.
As a straightforward consequence of C\'ea's lemma, it follows that $\bar{u}_h+\ell_b$ is endowed with the quasi-optimal approximation
property:
\begin{equation}
\label{eqn:CeaLift}
\|u-(\bar{u}_h+\lift{b})\|_{1,\Omega\setminus\Gamma}\leq\const\inf_{v_h\in\VV_h}\|u-(v_h+\lift{b})\|_{1,\Omega\setminus\Gamma}
\end{equation}
A meaningful characterization of the local approximation properties of WSM is therefore provided by an estimate for the 
deviation $\|\bar{u}_h+\lift{b}-u_h\|_{1,\Omega\setminus\varkappa^{\varepsilon}}$ between the WSM approximation $u_h$ and the local-lift-based 
approximation $\bar{u}_h+\lift{b}$. Note that  $\varkappa^{\varepsilon}\supset\mathrm{supp}(\lift{b})$ implies that $\lift{b}$ vanishes 
on $\Omega\setminus\varkappa^{\varepsilon}$ and, hence, the estimate pertains to the deviation between $\bar{u}_h\in\VV_h$ and $u_h\in\VV_h$.

The characterization of the local approximation properties of WSM in the $\HH^1$-norm in Theorem~\ref{thm:CeaLocal} below is based on
the interpolation of a particular extension of functions in $\VV_h$ onto~$\varkappa^{\varepsilon}$. Specifically,
we consider an extension corresponding to the operator $E:\VV_h\to\HH^1(\Omega)$:
\begin{subequations}
\label{eqn:tildeE}
\begin{alignat}{2}
E(v_h)&=v_h&\qquad&\text{in }\Omega\setminus\varkappa^{\varepsilon}
\label{eqn:tildeEa}
\\
-\divstress\big(E(v_h)\big)&=0&\qquad&\text{in }\varkappa^{\varepsilon}
\label{eqn:tildeEb}
\end{alignat}
\end{subequations}
in the sense of distributions, and its optimal approximation in the norm defined by $a_{\Gamma}(\cdot,\cdot)$:
\begin{equation}
\label{eqn:Eh}
E_h(v_h)=\argmin_{w_h\in{}v_h+\VV_{h,\varkappa^{\varepsilon}}}a_{\Gamma}\big(E(v_h)-w_h,E(v_h)-w_h\big)
\end{equation}
where $\VV_{h,\varkappa^{\varepsilon}}:=\{v_h\in\VV_h:\mathrm{supp}(v_h)\subseteq\varkappa^{\varepsilon}\}\neq\emptyset$ denotes the class of approximation functions
of which the support is confined to~$\varkappa^{\varepsilon}$. Note that $\VV_{h,\varkappa^{\varepsilon}}\neq\emptyset$ implies $\varepsilon\geq{}\const{}h$.

Equations~\EQ{tildeEa} and~\EQ{Eh} imply that $E(v_h)$ and $E_h(v_h)$ coincide with~$v_h$ 
outside~$\varkappa^{\varepsilon}$.
Inside~$\varkappa^{\varepsilon}$, the extension~$E(v_h)$ is defined by the homogeneous elasticity problem~\EQ{tildeEb}. From $E(v_h)\in\HH^1(\Omega)$
it follows that $E(v_h)$ is continuous at $\partial\varkappa^{\varepsilon}$ in the trace sense, which implies that~\EQ{tildeEb}
is complemented by the Dirichlet boundary condition $E(v_h)=v_h$ on~$\partial\varkappa^{\varepsilon}$. 
By virtue of  the smoothness condition on the elasticity tensor in~\EQ{smoothelast}, the extension operator 
according to~\EQ{tildeE} exhibits an interior regularity property on~$\varkappa^{\varepsilon}$. In particular, for
all $v_h\in\VV_h$ it holds
that $E(v_h)\in\HH^2_{\mathrm{loc}}(\varkappa^{\varepsilon})$ (i.e., $\phi{}E(v_h)\in\HH^2(\varkappa^{\varepsilon})$ for any
$\phi\in{}C^{\infty}(\Omega)$ with compact support in~$\varkappa^{\varepsilon}$) and the following estimate holds
for each open subset $K\Subset\varkappa^{\varepsilon}$:
\begin{equation}
\label{eqn:InteriorRegularity}
\|E(v_h)\|_{2,K}\leq\const\|E(v_h)\|_{\varkappa^{\varepsilon}}
\qquad\forall{}v_h\in\VV_h;
\end{equation} 
see, for instance, \cite[\S{}6.3.1]{Evans:2009ph} or~\cite[\S{}2.3]{LionsMagenes1972I} for further details. 
It is important to note that estimate~\EQ{InteriorRegularity} in conjunction with the trace theorem implies that 
\begin{equation}
\label{eqn:RegularityTractionBound}
\big\|\mean{\sigma_{\nu}(E(v_h))}\big\|_{\varkappa}\leq\const\big\|E(v_h)\big\|_{1,\varkappa^{\varepsilon}}
\qquad\forall{}v_h\in\VV_h\,,
\end{equation}
for some $\const>0$ independent of~$v_h$, provided that there exists an open subset $K\Subset\varkappa^{\varepsilon}$ such 
that~$\varkappa\subset{}K$. This provision implies that the dislocation must be properly contained in~$\Omega$.
The proof of Theorem~\ref{thm:CeaLocal} involves an estimate of the difference between the average traction of $E(v_h)$
and its approximation $E_h(v_h)$ according to~\EQ{Eh}. In general, we can estimate $(b,\mean{\sigma_{\nu}(E(v_h)-E_h(v_h))})_{\Gamma}$
in the same manner as in the proof of~Theorem~\ref{thm:ANWSM}. The bound~\EQ{RegularityTractionBound} in combination with the
optimal-approximation property of~$E_h(v_h)$ in~\EQ{Eh} however suggests that in this case a sharper estimate can be established.
The derivation of such a refined estimate is intricate, however, as $(b,\mean{\sigma_{\nu}(\cdot)})_{\Gamma}$ is unbounded
on~$\HH^1(\Omega)$ and the estimate involves the difference 
between $E(v_h)$ and $E_h(v_h)$, which reside in different subspaces of~$\HH^1(\Omega)$. We therefore 
formulate the refined estimate in the form of a conjecture:
\begin{conjecture}
\label{thm:conjecture}
Assume that there exist an $\varepsilon^*>0$ and an open subset $K\Subset\varkappa^{\varepsilon}$ such that 
$\varkappa\subset{}K$. 
Consider a sequence of approximation spaces $\VV_{\mathcal{H}}$, the extension operator according to~\EQ{tildeE} and its approximation
according to~\EQ{Eh}. For all $\varepsilon\geq\varepsilon^*$ and all~$v_h\in\VV_h$, it holds that
\begin{equation}
\big\|\mean{\sigma_{\nu}(E(v_h)-E_h(v_h))}\big\|_{\varkappa}
\leq
\const
\big\|E(v_h)-E_h(v_h)\|_{1,\varkappa^{\varepsilon}}
\end{equation}
for some constant $\const>0$ independent of~$h$.
\end{conjecture}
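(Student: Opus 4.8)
The plan is to bring in a quasi-interpolant of the smooth extension $E(v_h)$ as an intermediary between $E(v_h)$ and its discrete counterpart $E_h(v_h)$, so that the traction defect on $\varkappa$ splits into a purely interpolatory part, controlled by the interior regularity~\EQ{InteriorRegularity} and the thinness of the layer of cells around $\varkappa$, and a purely discrete part, controlled by the discrete trace inequality~\EQ{DiscrTraceIneq}; both parts are then to be reduced to a local energy estimate on that shrinking layer, which is the one non-routine ingredient. I would first record the structure of $w:=E(v_h)-E_h(v_h)$: by the optimality in~\EQ{Eh} it is $a_\Gamma$-orthogonal to $\VV_{h,\varkappa^\varepsilon}$, and by~\EQ{tildeEa} together with the definition of $\VV_{h,\varkappa^\varepsilon}$ it vanishes on $\Omega\setminus\varkappa^\varepsilon$, while $\divstress(E(v_h))=0$ on $\varkappa^\varepsilon$ by~\EQ{tildeEb}; thus $w$ is precisely the Galerkin error of an interior elasticity problem whose exact solution $E(v_h)$ has the interior regularity of~\EQ{InteriorRegularity} (and is smooth in the interior in the customary constant-coefficient setting). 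Since the dislocation of a non-rupturing fault is compact and $K\Subset\varkappa^\varepsilon$, one may fix nested open sets $\varkappa\Subset K\Subset K_1\Subset\varkappa^\varepsilon$; let $N_h$ denote the union of the mesh cells meeting $\varkappa$, so that $N_h\subset K$ for $h$ small and $\meas_N(N_h)=O(h)$ because $\varkappa$ has $N$-measure zero and the mesh is shape-regular.

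Letting $I_h$ be a local quasi-interpolant onto $\VV_h$ that reproduces $\VV_h$, and using that the one-sided tractions of $E(v_h)$ agree on $\varkappa$ because $E(v_h)\in\HH^2_{\mathrm{loc}}(\varkappa^\varepsilon)$, I would write, on $\varkappa$,
\begin{equation*}
\mean{\sigma_\nu(w)}=\big(\sigma_\nu(E(v_h))-\mean{\sigma_\nu(I_hE(v_h))}\big)+\mean{\sigma_\nu\big(I_hE(v_h)-E_h(v_h)\big)}.
\end{equation*}
A scaled (cell-wise) trace inequality and polynomial interpolation estimates bound the first term by $\const\,h^{1/2}|E(v_h)|_{2,N_h}$. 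For the second term, splitting the integral on $\varkappa$ over the segments $s\in\mathcal{S}_h$ exactly as in the proof of Lemma~\ref{thm:continuity} and invoking~\EQ{DiscrTraceIneq} gives a bound of order $h^{-1/2}|I_hE(v_h)-E_h(v_h)|_{1,N_h}\le\const\,h^{-1/2}\big(|w|_{1,N_h}+|E(v_h)-I_hE(v_h)|_{1,N_h}\big)$, and $|E(v_h)-I_hE(v_h)|_{1,N_h}\le\const\,h|E(v_h)|_{2,N_h}$. Consequently the whole estimate collapses onto the interior energy bound
\begin{equation*}
|w|_{1,N_h}\le\const\,h^{1/2}\,\|w\|_{1,\varkappa^\varepsilon},
\end{equation*}
together with its companion lower bound $h^{1/2}|E(v_h)|_{2,N_h}\le\const\,\|w\|_{1,\varkappa^\varepsilon}$: the first asserts that the share of Galerkin-error energy carried by the $O(h)$-neighborhood of $\varkappa$ is itself $O(h)$, and the second is an efficiency-type estimate linking the local $\HH^2$-content of $E(v_h)$ near $\varkappa$ to the error energy.

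These interior estimates on the shrinking layer $N_h$ are the main obstacle, and the reason the result is offered as a conjecture. Classical interior (Nitsche--Schatz, Schatz--Wahlbin) estimates control $\|w\|_{1,\Omega_0}$ on \emph{fixed} nested subdomains $\Omega_0\Subset\Omega_1$ but do not by themselves furnish the quantitative dependence on the width of a region that shrinks with $h$; the efficiency-type companion is likewise non-standard because $\divstress(E(v_h))=0$ makes the usual local-lower-bound residual purely the interelement stress jump; and the natural duality route is blocked, exactly as in the proof of Theorem~\ref{thm:ANWSM}, by $(b,\mean{\sigma_\nu(\cdot)})_\Gamma$ being unbounded on $\HH^1(\Omega)$ and by $E(v_h)$, $E_h(v_h)$ and $I_hE(v_h)$ inhabiting distinct (elastic-harmonic, broken- and continuous-polynomial) subspaces of $\HH^1(\Omega)$. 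I would attempt the interior energy bound by iterating a discrete Caccioppoli/superapproximation inequality over a dyadic stack of shells $\{x:2^{j}h<\dist(x,\varkappa)<2^{j+1}h\}$ exhausting $K_1\setminus N_h$ and summing the resulting geometric series, in the Schatz--Wahlbin manner; as an alternative, one could pursue a duality argument against an auxiliary Nitsche-type projection onto $\hat{\VV}_h$ of a lift into the fixed set $K$ of a test traction on $\varkappa$, mirroring the treatment of $(b,\mean{\sigma_\nu(v_h-{\zvp}_h)})_\Gamma$ in the proof of Theorem~\ref{thm:ANWSM}. Once the layer estimates are in hand, the remaining ingredients --- the interpolation, trace and interior-regularity bounds collected above --- are routine.
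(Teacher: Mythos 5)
The paper offers no proof of this statement: it is stated, deliberately, as a conjecture, precisely because (as the authors note just before it) the functional $v\mapsto(b,\mean{\sigma_{\nu}(v)})_{\Gamma}$ is unbounded on $\HH^1(\Omega)$ and because $E(v_h)$ and $E_h(v_h)$ live in different subspaces of $\HH^1(\Omega)$. Your proposal does not close this gap either, and to your credit you say so explicitly. What you have done is reduce the conjecture, via a quasi-interpolant $I_hE(v_h)$, a scaled trace inequality, and the discrete trace inequality \EQ{DiscrTraceIneq}, to two layer estimates on the $O(h)$-neighborhood $N_h$ of $\varkappa$: the localization bound $|w|_{1,N_h}\leq\const h^{1/2}\|w\|_{1,\varkappa^{\varepsilon}}$ for $w=E(v_h)-E_h(v_h)$, and the efficiency-type bound $h^{1/2}|E(v_h)|_{2,N_h}\leq\const\|w\|_{1,\varkappa^{\varepsilon}}$. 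Both remain unproven, and both are genuinely nonstandard. The first is not a consequence of Nitsche--Schatz/Schatz--Wahlbin interior estimates, which concern fixed nested subdomains; to extract an $O(h^{1/2})$ factor from a layer of width $O(h)$ you would need something like a local $W^1_\infty$ stability bound for $w$ with an $h$-independent constant, which for $p=1$ already carries logarithmic losses and for the piecewise-polynomial part of $w$ is not obviously available. The second is the more serious obstruction: quasi-optimality gives $\|w\|_{1,\varkappa^{\varepsilon}}\lesssim h|E(v_h)|_{2}$ as an \emph{upper} bound, and there is no general reason the reverse inequality should hold --- if $E(v_h)$ happens to be unusually well approximated in $v_h+\VV_{h,\varkappa^{\varepsilon}}$, the right-hand side of the conjectured estimate collapses while your bound on the left-hand side, which still carries $h^{1/2}|E(v_h)|_{2,N_h}$, does not. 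Without that efficiency bound your argument only recovers $\|\mean{\sigma_{\nu}(w)}\|_{\varkappa}\leq\const h^{1/2}\|E(v_h)\|_{1,\varkappa^{\varepsilon}}+\const h^{-1/2}|w|_{1,N_h}$, whose first term is controlled by $\|E(v_h)\|_{1,\varkappa^{\varepsilon}}$ rather than by $\|w\|_{1,\varkappa^{\varepsilon}}$ --- essentially the already-established bound \EQ{RegularityTractionBound}, not the conjecture.

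That said, your diagnosis of the difficulty coincides with the authors' own stated reasons for leaving the result open, your structural observations about $w$ (the $a_{\Gamma}$-orthogonality to $\VV_{h,\varkappa^{\varepsilon}}$, the vanishing outside $\varkappa^{\varepsilon}$, the interior regularity of $E(v_h)$) are correct and would be the right starting point, and the two routes you sketch --- a dyadic Caccioppoli/superapproximation iteration over shells, or a duality argument against an auxiliary Nitsche projection mirroring the proof of Theorem~\ref{thm:ANWSM} --- are reasonable directions. But as submitted this is a research plan reduced to two open sub-estimates, not a proof, so it cannot be accepted as resolving the statement.
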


Theorem~\ref{thm:CeaLocal} below presents a general characterization of the local approximation properties of WSM, independent of
Conjecture~\ref{thm:conjecture}, and a refinement, which is contingent on the conjecture. 

\begin{theorem}[Local approximation properties of WSM in the $\HH^1$-norm]
\label{thm:CeaLocal}
Assume that there exist an $\varepsilon^*>0$ and an open subset $K\Subset\varkappa^{\varepsilon}$ such that 
$\varkappa\subset{}K$. For arbitrary $\varepsilon\geq\varepsilon^*$, 
let $\lift{b}\in\HH^1_{0,\dirichlet}(\Omega\setminus\Gamma)$ denote a lift of~$b$ such that
$\supp(\lift{b})\subset\varkappa^{\varepsilon}$. Given a sequence of approximation spaces 
$\VV_{\mathcal{H}}$, for each $h\in\mathcal{H}$ let $\bar{u}_h$ denote the approximation of the
continuous complement corresponding to $\lift{b}$ in~\EQ{weak_varform1_fem} and let $u_h\in\VV_h$ denote the WSM 
approximation according to~\EQ{wsm}. Consider the extension operator in~\EQ{tildeE} and its approximation in~$\VV_h$
according to~\EQ{Eh}. It holds that
\begin{equation}
\label{eqn:CeaLocal}
\begin{aligned}
\|\bar{u}_h+\lift{b}-u_h\|_{1,\Omega\setminus\varkappa^{\varepsilon}}^2
&\leq{}
\const\|\lift{b}\|_{1,\varkappa^{\varepsilon}\setminus\Gamma}\|E(\bar{u}_h-u_h)-E_h(\bar{u}_h-u_h)\|_{1,\varkappa^{\varepsilon}}
\\
&\phantom{\leq}
+\const\big|\big(b,\mean{\sigma_{\nu}(E(\bar{u}_h-u_h)-E_h(\bar{u}_h-u_h))}\big)_{\Gamma}\big|
\end{aligned}
\end{equation}
For some constant $\const>0$ independent of~$h$ and~$\varepsilon$.
If, in addition, Conjecture~\ref{thm:conjecture} holds, then 
\begin{equation}
\label{eqn:CeaLocal2}
\|\bar{u}_h+\lift{b}-u_h\|_{1,\Omega\setminus\varkappa^{\varepsilon}}
\leq{}
\const\big(\|\lift{b}\|_{1,\varkappa^{\varepsilon}\setminus\Gamma}+\|b\|_{\Gamma}\big)
\sup_{v_h\in\VV_h}\inf_{w_h\in{}v_h+\VV_{h,\varkappa^{\varepsilon}}}\frac{\|E(v_h)-w_h\|_{1,\varkappa^{\varepsilon}}}
{\|v_h\|_{1,\Omega\setminus\varkappa^{\varepsilon}}}
\end{equation}
for some constant $\const>0$ independent of~$h$ and~$\varepsilon$.
\end{theorem}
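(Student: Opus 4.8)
The plan is to compare the WSM solution $u_h$ with the lift-based Galerkin solution $\bar u_h+\lift{b}$ through their variational equations, and to use the harmonic extension $E$ to transfer the unbounded functional $(b,\mean{\sigma_{\nu}(\cdot)})_{\manifold}$ onto a function that, by interior regularity, is smooth near the dislocation. Write $e_h:=\bar u_h-u_h\in\VV_h$ and $\eta_h:=\bar u_h+\lift{b}-u_h=e_h+\lift{b}$; since $\supp(\lift{b})\subset\varkappa^{\varepsilon}$, one has $\eta_h=e_h$ on $\Omega\setminus\varkappa^{\varepsilon}$, so the quantity to be estimated equals $\|e_h\|_{1,\Omega\setminus\varkappa^{\varepsilon}}$. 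For $D\subseteq\Omega$ abbreviate $a_{D}(v,w):=\int_{D\setminus\manifold}\sigma(v):\grad w$. Subtracting~\EQ{wsm} from~\EQ{weak_varform1_fem}, and using the continuity of $\bar u_h,u_h,v_h$ together with bilinearity, one obtains the orthogonality relation
\[
a_{\manifold}(\eta_h,v_h)=\big(b,\mean{\sigma_{\nu}(v_h)}\big)_{\manifold}\qquad\forall\,v_h\in\VV_h .
\]

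The central step is the identity $a_{\manifold}(E(e_h),E(e_h))=a_{\manifold}(\eta_h,E(e_h))-(b,\mean{\sigma_{\nu}(E(e_h))})_{\manifold}$. To see it, split $a_{\manifold}(\eta_h,E(e_h))=a_{\Omega\setminus\varkappa^{\varepsilon}}(e_h,e_h)+a_{\varkappa^{\varepsilon}}(e_h+\lift{b},E(e_h))$ using $\eta_h=E(e_h)=e_h$ off $\varkappa^{\varepsilon}$, write $e_h+\lift{b}=E(e_h)+(e_h+\lift{b}-E(e_h))$, and apply to $a_{\varkappa^{\varepsilon}}(E(e_h),e_h+\lift{b}-E(e_h))$ a localized integration-by-parts identity of the form~\EQ{varform1} on $\varkappa^{\varepsilon}$: the bulk term vanishes because $\divstress(E(e_h))=0$ in $\varkappa^{\varepsilon}$; the $\partial\varkappa^{\varepsilon}$-boundary term vanishes because $e_h+\lift{b}-E(e_h)$ has zero trace on $\partial\varkappa^{\varepsilon}$; the term containing $\jump{\sigma_{\nu}(E(e_h))}$ vanishes because $E(e_h)\in\HH^1(\Omega)$ has continuous traction across $\manifold$; and the remaining term is $(b,\mean{\sigma_{\nu}(E(e_h))})_{\manifold}$ since $\jump{e_h+\lift{b}-E(e_h)}=\jump{\lift{b}}=b$ on $\manifold$ — a legitimate pairing because~\EQ{InteriorRegularity} together with $\varkappa\subset K\Subset\varkappa^{\varepsilon}$ makes $\sigma_{\nu}(E(e_h))$ an $\LL^2$ function on $\varkappa$. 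Combining with $a_{\manifold}(E(e_h),E(e_h))=a_{\Omega\setminus\varkappa^{\varepsilon}}(e_h,e_h)+a_{\varkappa^{\varepsilon}}(E(e_h),E(e_h))$ gives the identity. Next write $E(e_h)=E_h(e_h)+d_h$ with $d_h:=E(e_h)-E_h(e_h)$ and $E_h(e_h)\in\VV_h$; the orthogonality relation with $v_h=E_h(e_h)$ yields $a_{\manifold}(\eta_h,E(e_h))=(b,\mean{\sigma_{\nu}(E_h(e_h))})_{\manifold}+a_{\manifold}(\eta_h,d_h)$, hence
\[
a_{\manifold}(E(e_h),E(e_h))=a_{\manifold}(\eta_h,d_h)-\big(b,\mean{\sigma_{\nu}(d_h)}\big)_{\manifold}.
\]

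It remains to bound $a_{\manifold}(\eta_h,d_h)$. Since $d_h$ is supported in $\varkappa^{\varepsilon}$ and $\eta_h=e_h+\lift{b}$ there, $a_{\manifold}(\eta_h,d_h)=a_{\varkappa^{\varepsilon}}(e_h,d_h)+a_{\varkappa^{\varepsilon}}(\lift{b},d_h)$. Because $E(e_h)$ is harmonic and $d_h$ has zero trace on $\partial\varkappa^{\varepsilon}$, integration by parts gives $a_{\varkappa^{\varepsilon}}(E(e_h),d_h)=0$; and the optimality condition for $E_h(e_h)$ in~\EQ{Eh}, namely $a_{\manifold}(d_h,\psi_h)=0$ for all $\psi_h\in\VV_{h,\varkappa^{\varepsilon}}$, together with $E(e_h)-e_h=d_h+(E_h(e_h)-e_h)$ and $E_h(e_h)-e_h\in\VV_{h,\varkappa^{\varepsilon}}$, yields $a_{\varkappa^{\varepsilon}}(e_h,d_h)=-a_{\varkappa^{\varepsilon}}(d_h,d_h)\le0$. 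Dropping this nonpositive term, estimating $a_{\varkappa^{\varepsilon}}(\lift{b},d_h)$ by~\EQ{aGcontinuous}, and combining with coercivity $a_{\manifold}(E(e_h),E(e_h))\ge\ucA|E(e_h)|_{1,\Omega\setminus\manifold}^2\ge\ucA|e_h|_{1,\Omega\setminus\varkappa^{\varepsilon}}^2$ and Poincar\'e's inequality on $\Omega\setminus\varkappa^{\varepsilon}$ (valid since $e_h$ vanishes on $\dirichlet$), one obtains~\EQ{CeaLocal}. For the refinement~\EQ{CeaLocal2}, Conjecture~\ref{thm:conjecture} gives $|(b,\mean{\sigma_{\nu}(d_h)})_{\manifold}|\le\|b\|_{\manifold}\,\|\mean{\sigma_{\nu}(d_h)}\|_{\varkappa}\le\const\|b\|_{\manifold}\,\|d_h\|_{1,\varkappa^{\varepsilon}}$, so~\EQ{CeaLocal} becomes $\|e_h\|_{1,\Omega\setminus\varkappa^{\varepsilon}}^2\le\const(\|\lift{b}\|_{1,\varkappa^{\varepsilon}\setminus\manifold}+\|b\|_{\manifold})\|d_h\|_{1,\varkappa^{\varepsilon}}$. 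Since $E_h(e_h)$ is the $a_{\manifold}$-projection of $E(e_h)$ onto $e_h+\VV_{h,\varkappa^{\varepsilon}}$ and $a_{\manifold}$ is continuous and coercive on that affine class (its difference space consisting of $\HH^1$-functions supported in $\varkappa^{\varepsilon}$), C\'ea's lemma gives $\|d_h\|_{1,\varkappa^{\varepsilon}}\le\const\inf_{w_h\in e_h+\VV_{h,\varkappa^{\varepsilon}}}\|E(e_h)-w_h\|_{1,\varkappa^{\varepsilon}}\le\const\,S\,\|e_h\|_{1,\Omega\setminus\varkappa^{\varepsilon}}$, where $S$ is the supremum-infimum factor in~\EQ{CeaLocal2}. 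Dividing by $\|e_h\|_{1,\Omega\setminus\varkappa^{\varepsilon}}$ (the estimate being trivial when it vanishes) gives~\EQ{CeaLocal2}.

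I expect the main obstacle to be the central integration-by-parts identity: one must verify that no spurious traction contributions survive on $\partial\varkappa^{\varepsilon}$ or at $\manifold$ and that everything reassembles into the single term $(b,\mean{\sigma_{\nu}(E(e_h))})_{\manifold}$, which hinges on the interior-regularity estimate~\EQ{InteriorRegularity} (ensuring, via the trace theorem, that $\sigma_{\nu}(E(e_h))\in\LL^2(\varkappa)$) and on the hypothesis $\varkappa\subset K\Subset\varkappa^{\varepsilon}$. The second delicate point is the bookkeeping of the several cross-terms on $\varkappa^{\varepsilon}$ and the observation that $-a_{\varkappa^{\varepsilon}}(d_h,d_h)$ carries the favorable sign; once these are in place, the passage to~\EQ{CeaLocal2} through Conjecture~\ref{thm:conjecture}, C\'ea's lemma and absorption is routine, as is the uniformity of all constants in $h$ and in $\varepsilon\ge\varepsilon^{*}$.
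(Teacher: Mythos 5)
Your proposal is correct and follows essentially the same route as the paper's proof: the difference of the two discrete variational problems, the identity $a_{\Gamma}(\lift{b},E(\bar u_h-u_h))=(b,\mean{\sigma_{\nu}(E(\bar u_h-u_h))})_{\Gamma}$ obtained from interior regularity, the orthogonality of the projection $E_h$, coercivity plus Poincar\'e, and finally Conjecture~\ref{thm:conjecture} with the quasi-optimality of $E_h$ for~\EQ{CeaLocal2}. The only cosmetic deviations are that you coerce $a_{\Gamma}(E(e_h),E(e_h))$ and drop the nonpositive term $-a_{\varkappa^{\varepsilon}}(d_h,d_h)$, whereas the paper coerces $a_{\Gamma}(E_h,E_h)$ and lets the cross term vanish exactly via~\EQ{OptimalityEh}, and that the Poincar\'e step is cleanest when applied on all of $\Omega$ to the extension $E(e_h)\in\HH^1_{0,\dirichlet}(\Omega)$ (as the paper does) rather than on $\Omega\setminus\varkappa^{\varepsilon}$, so that the constant is manifestly independent of $\varepsilon$.
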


\begin{proof}
We use the condensed notation $E_h:=E_h(\bar{u}_h-u_h)$.
Noting that~\EQ{Eh} implies that $E_h$ coincides with $\bar{u}_h-u_h$ on~$\Omega\setminus\varkappa^{\varepsilon}$ and
that~$\mathrm{supp}(\lift{b})\subset\varkappa^{\varepsilon}$, it holds that
\begin{equation}
\label{eqn:CeaLocal1a}
\|\bar{u}_h+\lift{b}-u_h\|_{1,\Omega\setminus\varkappa^{\varepsilon}}
=
\|\bar{u}_h-u_h\|_{1,\Omega\setminus\varkappa^{\varepsilon}}
\leq
\|E_h\|_{1,\Omega}
\end{equation}
Using the Poincar\'e inequality~\EQ{Poincare} and the strong positivity of~$a_{\Gamma}(\cdot,\cdot)$ according to~\EQ{strongpositive},
we obtain the following bound:
\begin{equation}
\label{eqn:localityseq}
\|E_h\|_{1,\Omega}^2
\leq
(1+C_p)\ucA^{-1}\big|a_{\Gamma}\big(E_h,E_h\big)\big|
\end{equation}
By introducing a suitable partition of zero, we derive from the WSM
formulation~\EQ{wsm} and the lift-based Galerkin
approximation~\EQ{weak_varform1_fem}:
\begin{equation}
\label{eqn:aGEE}
\begin{aligned}
a_{\Gamma}\big(E_h,E_h\big)
&=
a_{\Gamma}\big(\bar{u}_h-u_h,E_h)+a_{\Gamma}(E_h-(\bar{u}_h-u_h),E_h)
\\
&=
\big(b,\mean{\sigma_{\nu}(E_h)}\big)_{\Gamma}-a_{\Gamma}(\lift{b},E_h)+a_{\Gamma}(E_h-(\bar{u}_h-u_h),E_h)
\end{aligned}
\end{equation}
By virtue of~\EQ{tildeEb}, $a_{\Gamma}(v_h,E(\bar{u}_h-u_h))$ vanishes for all~$v_h\in\VV_{h,\varkappa^{\varepsilon}}$. 
The optimality condition associated with~\EQ{Eh} therefore reduces to:
\begin{equation}
\label{eqn:OptimalityEh}
a_{\Gamma}(v_h,E_h)=0
\qquad\forall{}v_h\in\VV_{h,\varkappa^{\varepsilon}}\,.
\end{equation} 
Noting that~$E_h-(\bar{u}_h-u_h)\in\VV_{h,\varkappa^{\varepsilon}}$, Equation~\EQ{OptimalityEh} implies that
the right-most term in the ultimate expression in~\EQ{aGEE} vanishes.
By virtue of the interior regularity of $E(\bar{u}_h-u_h)$, the following identity holds:
\begin{equation}
\label{eqn:aliftb}
a_{\Gamma}\big(\lift{b},E(\bar{u}_h-u_h)\big)
=
\big(b,\mean{\sigma_{\nu}(E(\bar{u}_h-u_h))}\big)_{\Gamma}
\end{equation}
Let us note that~\EQ{aliftb} corresponds to the (admissible) identification of the duality pairing between $b\in\HH^{1/2}_0(\varkappa)$ and 
$\mean{\sigma_{\nu}(E(\bar{u}_h-u_h))}\in\HH^{-1/2}(\varkappa)$ to an $\LL^2$ inner product. From~\EQ{aGEE}\nobreakdash--\EQ{aliftb} and
the triangle inequality we then deduce that:
\begin{equation}
\label{eqn:finalCeaLocal}
\big|a_{\Gamma}(E_h,E_h)\big|
\leq
\big|a_{\Gamma}(\lift{b},E(\bar{u}_h-u_h)-E_h)\big|
+
\big|(b,\mean{\sigma_{\nu}(E(\bar{u}_h-u_h)-E_h)})_{\Gamma}\big|
\end{equation}
We recall that $E(\bar{u}_h-u_h)-E_h$ vanishes on~$\Omega\setminus\varkappa^{\varepsilon}$.
Estimate~\EQ{CeaLocal} then follows straightforwardly from~\EQ{CeaLocal1a}, \EQ{localityseq}, \EQ{finalCeaLocal} and 
the continuity of $a_{\Gamma}(\cdot,\cdot)$ according to~\EQ{aGcontinuous}.

To prove the auxiliary assertion~\EQ{CeaLocal2}, we note that $\|E(v_h)-E_h(v_h)\|_{1,\Omega}$ in fact depends only on the trace of $v_h$ on $\partial\varkappa^{\varepsilon}$
and, by the trace theorem, it holds that~$\|E(v_h)-E_h(v_h)\|_{1,\Omega}\leq\const\|v_h\|_{1,\Omega\setminus\varkappa^{\varepsilon}}$.
Therefore, if Conjecture~\ref{thm:conjecture} holds, we obtain from~\EQ{CeaLocal}:
\begin{equation}
\label{eqn:finalCeaLocal2}
\begin{aligned}
\|\bar{u}_h-u_h\|_{1,\Omega\setminus\varkappa^{\varepsilon}}^2
&\leq
\const\big(\|\lift{b}\|_{1,\varkappa^{\varepsilon}\setminus\Gamma}+\|b\|_{\Gamma}\big)\big\|E(\bar{u}_h-u_h)-E_h(\bar{u}_h-u_h)\big\|_{1,\varkappa^{\varepsilon}}
\\
&\leq
\const\big(\|\lift{b}\|_{1,\varkappa^{\varepsilon}\setminus\Gamma}+\|b\|_{\Gamma}\big)
\sup_{v_h\in\VV_h}\frac{\|E(v_h)-E_h(v_h)\|_{\varkappa^{\varepsilon}}}{\|v_h\|_{1,\Omega\setminus\varkappa^{\varepsilon}}}
\|\bar{u}_h-u_h\|_{1,\Omega\setminus\varkappa^{\varepsilon}}
\end{aligned}
\end{equation}
Estimate~\EQ{CeaLocal2} follows directly from the identity in~\EQ{CeaLocal1a}, the ultimate bound in~\EQ{finalCeaLocal2} and the definition 
of~$E_h$ in~\EQ{Eh}.
\end{proof}

Theorem~\ref{thm:CeaLocal} essentially implies that if the lift-based approximation~\EQ{weak_varform1_fem} displays optimal global convergence,
then the WSM approximation~\EQ{wsm} displays optimal local convergence.
\begin{corollary}
\label{thm:OptimalLocal}
Assume that Conjecture~\ref{thm:conjecture} holds and that
there exist an $\varepsilon^*>0$ and an open subset $K\Subset\varkappa^{\varepsilon^*}$ such that $\varkappa\subset{}K$. 
Assume that $A_{ijkl}\in{}C^{k+1}(\overline{\Omega})$ for some integer $k\geq{}0$ and that $\Omega$ is convex or of class $C^{k+2}$.
Assume that $b$ admits a sufficiently smooth local lifting, in particular, that there exists 
an $\lift{b}$ 
such that $\mean{\sigma_{\nu}(\lift{b})}=0$, $\divstress(\lift{b})\in{}\HH^k(\Omega)$ and $\mathrm{supp}(\lift{b})\subset\varkappa^{\varepsilon}$.
Let $\VV_{\mathcal{H}}$ denote a sequence of $\HH^1(\Omega)$-conforming piecewise polynomial approximation spaces of degree $p\geq{}1$ with 
approximation property~\EQ{OptimalInterpolation}. 
Let $u$ denote the solution to the Volterra dislocation problem~\EQ{weak_varform} and let $u_{\mathcal{H}}$ denote the sequence of WSM 
approximations~\EQ{wsm} corresponding to the approximation spaces $\VV_{\mathcal{H}}$. It holds that
\begin{equation}
\label{eqn:OptimalLocal}
\|u-u_h\|_{1,\Omega\setminus\varkappa^{\varepsilon}}\leq\const{}h^l
\end{equation}
as $h\to{}0$ with $\const>0$ independent of~$h$ and $l=\min\{p,k+1\}$.
\end{corollary}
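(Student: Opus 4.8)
The plan is to estimate $\|u-u_h\|_{1,\Omega\setminus\varkappa^{\varepsilon}}$ by interposing the \emph{local-lift} Galerkin approximation $\bar{u}_h+\lift{b}$ from~\EQ{weak_varform1_fem}, built on the smooth lift $\lift{b}$ whose existence is postulated in the hypotheses, and splitting by the triangle inequality:
\[
\|u-u_h\|_{1,\Omega\setminus\varkappa^{\varepsilon}}
\le
\|u-(\bar{u}_h+\lift{b})\|_{1,\Omega\setminus\Gamma}
+\|(\bar{u}_h+\lift{b})-u_h\|_{1,\Omega\setminus\varkappa^{\varepsilon}}\,,
\]
where the passage to the left norm in the first term uses that all three functions are continuous across $\Gamma\setminus\varkappa$. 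The first term is the standard finite-element error, which I would show is $O(h^l)$; the second term is exactly the deviation bounded in Theorem~\ref{thm:CeaLocal}.

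For the first term, C\'ea's lemma in the form~\EQ{CeaLift} gives the bound $\const\inf_{v_h\in\VV_h}\|(u-\lift{b})-v_h\|_{1,\Omega}=\const\inf_{v_h}\|\bar{u}-v_h\|_{1,\Omega}$ for the continuous complement $\bar{u}:=u-\lift{b}\in\HH^1_{0,\dirichlet}(\Omega)$. The decisive point is a regularity statement for $\bar u$: membership $\divstress(\lift{b})\in\HH^k(\Omega)$ (as opposed to merely $\HH^k(\Omega\setminus\Gamma)$) forces the singular part $\jump{\sigma_{\nu}(\lift{b})}\,\delta_{\Gamma}$ of $\divstress(\lift{b})$ to vanish, and combined with $\mean{\sigma_{\nu}(\lift{b})}=0$ this gives $\sigma_{\nu}^{\pm}(\lift{b})=0$; hence $\bar{u}$ solves a genuine linear-elasticity boundary-value problem on all of $\Omega$ with right-hand side $\divstress(\lift{b})\in\HH^k(\Omega)$ and homogeneous (reduced) boundary data. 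Elliptic regularity under the standing assumptions $A_{ijkl}\in C^{k+1}(\overline{\Omega})$ and $\Omega$ convex or of class $C^{k+2}$ then yields $\bar{u}\in\HH^{k+2}(\Omega)\cap\HH^1_{0,\dirichlet}(\Omega)$ with $\|\bar{u}\|_{k+2,\Omega}\le\const\|\divstress(\lift{b})\|_{k,\Omega}$, and the interpolation estimate~\EQ{OptimalInterpolation} delivers $\inf_{v_h}\|\bar{u}-v_h\|_{1,\Omega}\le\const h^l|\bar{u}|_{l+1,\Omega}$ with $l=\min\{p,k+1\}$. Since $\lift{b}$ is fixed, the first term is $O(h^l)$.

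For the second term I would invoke the refined estimate~\EQ{CeaLocal2} of Theorem~\ref{thm:CeaLocal}, which is licensed because Conjecture~\ref{thm:conjecture} is assumed. This bounds the term by $\const(\|\lift{b}\|_{1,\varkappa^{\varepsilon}\setminus\Gamma}+\|b\|_{\Gamma})\,Q_h$, with an $h$-independent prefactor and
\[
Q_h:=\sup_{v_h\in\VV_h}\ \inf_{w_h\in v_h+\VV_{h,\varkappa^{\varepsilon}}}\ \frac{\|E(v_h)-w_h\|_{1,\varkappa^{\varepsilon}}}{\|v_h\|_{1,\Omega\setminus\varkappa^{\varepsilon}}}\,,
\]
so the matter reduces to showing $Q_h=O(h^l)$. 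Here one uses two facts about the extension operator~\EQ{tildeE}: first, $E(v_h)|_{\varkappa^{\varepsilon}}$ depends only on the trace $v_h|_{\partial\varkappa^{\varepsilon}}$, so by the trace theorem $\|E(v_h)\|_{1,\varkappa^{\varepsilon}}\le\const\|v_h\|_{1,\Omega\setminus\varkappa^{\varepsilon}}$; second, the homogeneous equation~\EQ{tildeEb} together with $A_{ijkl}\in C^{k+1}(\overline{\Omega})$ sharpens the interior-regularity estimate~\EQ{InteriorRegularity} to $\|E(v_h)\|_{k+2,K'}\le\const\|E(v_h)\|_{\varkappa^{\varepsilon}}\le\const\|v_h\|_{1,\Omega\setminus\varkappa^{\varepsilon}}$ for any fixed $K'$ with $\overline{\varkappa^{\varepsilon^{*}}}\Subset K'\Subset\varkappa^{\varepsilon}$. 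The competitor $w_h$ would then be built by quasi-interpolating $E(v_h)$ on the (fixed) region where it is $\HH^{k+2}$, which contributes $O(h^l)\|v_h\|_{1,\Omega\setminus\varkappa^{\varepsilon}}$ by~\EQ{OptimalInterpolation}, while being matched to $v_h$ near $\partial\varkappa^{\varepsilon}$ so that $w_h-v_h$ is supported in the interior core of $\varkappa^{\varepsilon}$; the nesting $\varkappa\subset K\subset\varkappa^{\varepsilon^{*}}\subset\varkappa^{\varepsilon}$ and a smooth cut-off supported strictly inside $\varkappa^{\varepsilon}$ are used to glue the two. Combining the two terms gives~\EQ{OptimalLocal}.

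I expect the genuine obstacle to be the bound $Q_h=O(h^l)$, and specifically the control of the boundary layer adjacent to $\partial\varkappa^{\varepsilon}$: there $E(v_h)$ is smooth only in the interior of $\varkappa^{\varepsilon}$ and inherits near $\partial\varkappa^{\varepsilon}$ the merely $C^0$, mesh-dependent roughness of $v_h|_{\partial\varkappa^{\varepsilon}}$, whereas $w_h$ is constrained to coincide with $v_h$ on the straddling elements. The cut-off, the nested neighbourhoods, and the local interpolation must be orchestrated so that this layer contributes no more than $O(h^l)\|v_h\|_{1,\Omega\setminus\varkappa^{\varepsilon}}$ rather than the only $O(1)$-bounded quantity $\|E(v_h)-v_h\|_{1,\varkappa^{\varepsilon}}$; exploiting the $\LL^2$-type (Aubin--Nitsche) gain implicit in the optimality of $E_h$ in~\EQ{Eh} and a Poincar\'e estimate on the $O(h)$-wide straddling strip appears to be the right device. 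Everything else --- C\'ea's lemma, elliptic regularity, and the interpolation estimate --- is routine once this localization is in place.
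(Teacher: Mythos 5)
Your proposal follows essentially the same route as the paper's proof: the same triangle-inequality split through the local-lift Galerkin approximation $\bar{u}_h+\lift{b}$, the same use of $\mean{\sigma_{\nu}(\lift{b})}=0$ and $\divstress(\lift{b})\in\HH^k(\Omega)$ to obtain $\bar{u}\in\HH^{k+2}(\Omega)$ via elliptic regularity and hence $\|\bar{u}-\bar{u}_h\|_{1,\Omega}\leq\const h^l$, and the same appeal to Theorem~\ref{thm:CeaLocal} (under Conjecture~\ref{thm:conjecture}) together with interior regularity of $E(v_h)$ to bound the supremum-infimum quotient by $\const h^l$. Your discussion of the boundary layer near $\partial\varkappa^{\varepsilon}$ in establishing $Q_h=O(h^l)$ is in fact more explicit than the paper, which asserts that bound directly from $E(v_h)\in\HH^{k+2}_{\mathrm{loc}}(\varkappa^{\varepsilon})$ and \EQ{OptimalInterpolation} without addressing the cut-off construction you describe.
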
  
\begin{proof}
Note that the following integration-by-parts identity holds for all $v\in\HH^1(\Omega)$:
\begin{equation}
\label{eqn:OptLoc1}
a_{\Gamma}(\lift{b},v)=\int_{\partial(\Omega\setminus\Gamma)}v\cdot\sigma_n(\lift{b})-\int_{\Omega\setminus\Gamma}v\cdot\divstress(\lift{b})
=\int_{\Gamma}v\cdot\mean{\sigma_{\nu}(\lift{b})}-\int_{\Omega\setminus\Gamma}v\cdot\divstress(\lift{b})
\end{equation}
The second identity follows by rearranging terms and applying~\EQ{rearrangement}. The first term in the ultimate expression in~\EQ{OptLoc1} vanishes by
virtue of the conditions on~$\lift{b}$. Moreover, because $l(v)=(f,v)=0$, the right member of~\EQ{weak_varform1} corresponds
to 
\begin{equation}
l(v)-a_{\Gamma}(\lift{b},v)=-\int_{\Omega\setminus\Gamma}v\cdot\divstress(\lift{b})
\end{equation}
By virtue of $\divstress(\lift{b})\in\HH^k(\Omega)$ and the conditions on the elasticity tensor and the domain, it then holds that the
continuous complement $\bar{u}$ in~\EQ{weak_varform1} resides in $\HH^{k+2}(\Omega)$; see, for instance, \cite[Theorem~6.3.5]{Evans:2009ph}.
The quasi-optimality of the Galerkin approximation~\EQ{weak_varform1_fem} in combination with the optimal approximation properties of~$\VV_h$ according to~\EQ{OptimalInterpolation}
implies that~$\|\bar{u}-\bar{u}_h\|_{1,\Omega}\leq\const{}h^l$. Moreover, the conditions on the elasticity tensor imply that
$E(v_h)\in\HH^{k+2}_{\mathrm{loc}}(\varkappa^{\varepsilon})$ and, in turn,~\EQ{OptimalInterpolation} yields:
\begin{equation}
\sup_{v_h\in\VV_h}\inf_{w_h\in{}v_h+\VV_{h,\varkappa^{\varepsilon}}}\frac{\|E(v_h)-w_h\|_{1,\varkappa^{\varepsilon}}}{\|v_h\|_{1,\Omega\setminus\varkappa^{\varepsilon}}}
\leq\const{}h^l
\end{equation}
The estimate~\EQ{OptimalLocal} then follows from 
\begin{equation}
\|u-u_h\|_{1,\Omega\setminus\varkappa^{\varepsilon}}
\leq\|\bar{u}-\bar{u}_h\|_{1,\Omega}+\|\bar{u}_h+\lift{b}-u_h\|_{1,\Omega\setminus\varkappa^{\varepsilon}}
\end{equation}
and Theorem~\ref{thm:CeaLocal}.
\end{proof}

\section{Numerical results}
\label{sec:results}

We will assess the approximation properties of the Weakly-enforced Slip Method
on the basis of three different test cases. All three test cases are designed
to have analytical results available. This allows us to compare the exact
solution of boundary value problem \eqref{eqn:BVP} with the WSM
solution \eqref{eqn:wsm} and study the behavior of errors and convergence under
refinement of the finite finite-element mesh.

The considered test cases are:
\begin{enumerate}
  \item[I.] a two-dimensional infinite domain loaded in plane strain, with
  a straight, finite dislocation and smooth slip distribution,
  \item[II.] a three-dimensional semi-infinite domain with traction-free
  surface, a planar, non-rupturing dislocation and constant slip, and
  \item[III.] a three-dimensional semi-infinite domain with traction-free
  surface, a planar, surface rupturing dislocation and constant slip.
\end{enumerate}

All three test cases are in principle set on infinite domains. To make
the problems amenable to treatment by the finite-element method, we
truncate the domains, and restrict the analyses to suitably large, but finite,
neighbourhoods of the dislocation. In order to focus on the treatment of the
dislocations, boundary truncation errors are controlled by constraining the
finite-element approximation to the analytical solution at the artificial
lateral boundaries.

Let us note that of the three test cases, only the first one is by
design in full accordance with the theory developed in
Section~\ref{sec:convergence}. Test case II and III feature a piecewise
constant slip to match available analytical solutions, violating the
condition of Lemma~\ref{thm:continuity} which states that the slip can
be lifted into $ \HH^1_{0,\dirichlet}(\Omega\setminus\Gamma) $, i.e., $ b \in \smash[t]{\HH_0^{1/2}( \varkappa )} $. Test
case III moreover considers a rupturing fault and, accordingly, it has 
nonzero slip at the intersection with the domain boundary. The results below
convey that the main results of the theory nonetheless uphold, 
suggesting that the conditions under which they apply can be relaxed.

\subsection{Test case I: 2D plane strain}

The first test case is a line dislocation in plane strain in an infinite two-dimensional isotropic domain. 
The dislocation is straight and of unit length, with a smoothly varying slip that is
tangent to the dislocation line, making it a pure shear dislocation.
Figure~\ref{fig:testcase1} shows a schematic of the computational setup.
\begin{figure}
  \centering
  \includegraphics{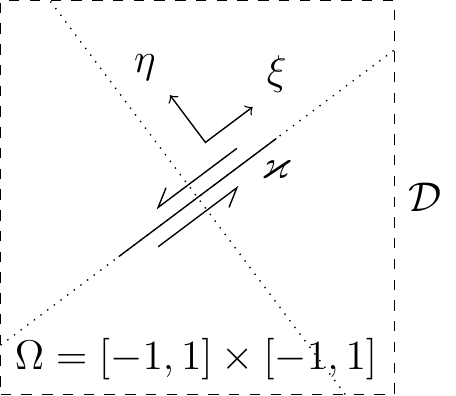}
  \caption{%
    Schematic overview of test case I. The dashed line marks the
    computational domain, which is a truncation of the actual
    (infinite) domain. A unit-length dislocation $\varkappa$ is placed at an
    $\mathrm{arctan}(3/4)$ angle with the horizontal axis at the center of the
    computational domain.}
  \label{fig:testcase1}
\end{figure}

Introducing an arclength coordinate $\xi$ along the fault, we consider a smooth, piecewise quadratic slip distribution according to:
\begin{equation}
b : \mathbb R \rightarrow b_0 \begin{cases}
\tfrac 3 2 + 6 \xi + 6 \xi^2 & -\tfrac 1 2 \hfill < \xi < -\tfrac 1 6 \\
1 - 12 \xi^2 & -\tfrac 1 6 < \xi < \hfill \tfrac 1 6 \\
\tfrac 3 2 - 6 \xi + 6 \xi^2 & \hfill \tfrac 1 6 < \xi < \hfill \tfrac 1 2 \\
0 & \text{otherwise}
\end{cases}
\label{eqn:smoothslip}
\end{equation}
The dislocation, which corresponds to the support of the slip, is located in the interval $\xi\in[-\tfrac{1}{2},\tfrac{1}{2}]$.
The slip is symmetric, with zero displacement at the tips and smoothly opening
and closing. We set the scaling factor $b_0=0.1$. The value of $b_0$ is however non-essential
on account of the linearity of the problem. An analytical solution to this
problem can be constructed for an infinite, homogeneous, isotropic domain, by adapting
the well known solution for edge dislocations~\cite{hirth82}. Introducing a perpendicular 
coordinate $ \eta $, the resulting displacement along the $(\xi,\eta)$ coordinates is expressed 
in terms of Lam\'e parameters $ \lambda $ and $ \mu $ as
\begin{equation}
u( \xi, \eta ) = b_0 \big[ 6 U(\xi-\tfrac 1 2,\eta) - 18 U(\xi-\tfrac 1
6,\eta) + 18 U(\xi+\tfrac 1 6,\eta) - 6 U(\xi+\tfrac 1 2,\eta) \big]
\label{eqn:testcase1exact}
\end{equation}
where
\begin{displaymath}
U(\xi,\eta) = \begin{pmatrix}
2 \frac {2\lambda+3\mu} {\lambda+2\mu} \xi \eta \\ -\frac \mu
{\lambda+2\mu} \xi^2 - \tfrac {2\lambda+\mu} {\lambda+2\mu} \eta^2
\end{pmatrix} \log \sqrt{ \xi^2 + \eta^2 } + \begin{pmatrix}
\frac {3\lambda+4\mu} {\lambda+2\mu} \eta^2 - \xi^2 \\ 2 \frac
{\lambda} {\lambda+2\mu} \xi \mu \end{pmatrix} \arctan{ \frac
\xi \mu }.
\end{displaymath}
A finite sized problem is obtained by truncating the domain to $
\Omega = (-1,1)^2 $ and introducing a Dirichlet condition
at the boundary $\partial\Omega= \dirichlet$ with data corresponding to
the exact solution. The fault is located in the center of the domain at an
angle of $\mathrm{arctan}(3/4)$. All numerical results are generated for Lam\'e
parameters $ \lambda = 1, \mu = 1 $.

Figure~\ref{fig:disptestcase1} shows the exact solution of
Eqn.~\eqref{eqn:testcase1exact} (left) in the deformed configuration, side by side with the WSM
approximation for linear shape functions on a $16 \times 16$ mesh
(right). The colors indicate displacement magnitude on a log scale. The
grid-line pattern represents the distortion of a mesh that is uniform in the undeformed 
configuration. The lines coincide with element
edges for the finite element computation on the right, and are matched
on the left to facilitate visual comparison. Because the test case is
symmetric, we expect the images to be approximately rotationally
symmetric. We observe that the rotational symmetry breaks at the dislocation, 
where the exact solution is discontinuous, while the WSM approximation
is continuous throughout the domain.

\begin{figure}
  \centering
  \includegraphics{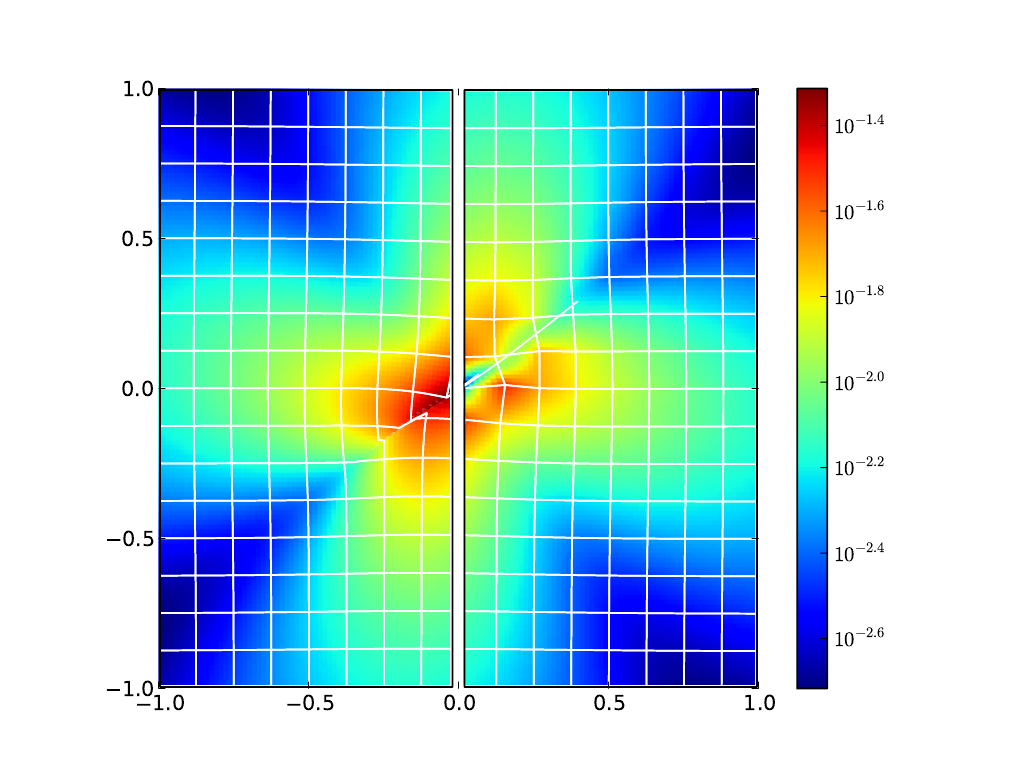}
  \caption{%
    Side by side view of the discontinuous exact solution of test case I
    (left) with the continuous WSM approximation on a $16 \times 16$
    mesh and linear shape functions (right). Color
    represents displacement magnitude on a log scale. Mesh lines on the
    left are introduced to indicate displacements and to facilitate
    comparison.}
  \label{fig:disptestcase1}
\end{figure}

The global symmetry of the displacement pattern of
Figure~\ref{fig:disptestcase1} indicates that the error induced by the WSM approximation
diminish with distance from the dislocation. To quantify this
observation, Figure~\ref{fig:disperrtestcase1} shows the errors for a $
16 \times 16 $ mesh (left), and for a sequence of meshes along
the line B--B' perpendicular to the fault (right). The latter shows that for every mesh the
error decays exponentially with increasing distance from the fault, with
the exception of a narrow zone in the vicinity of the
dislocation, where the error displays a transition to the 
constant error $ \tfrac 1 2 b_0 $ at the dislocation. One may observe that the
error at the dislocation is independent of element size. Away from the dislocation, errors are seen to
decrease with element size, approximately reducing by $10^{0.6} \approx
4$ whenever the mesh width is halved. This error reduction provides a first indication 
that local $\LL^2$ convergence is optimal at $ O( h^{p+1} )$ as $h\to{}0$. 

\begin{figure}
  \centering
  \includegraphics{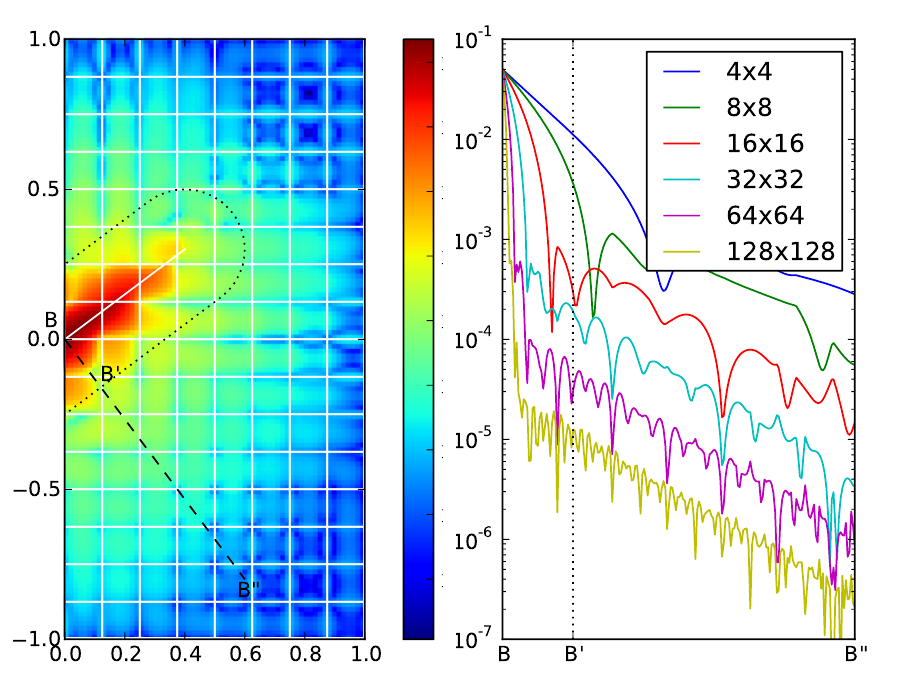}
  \caption{%
    Absolute value of the displacement error for the $16 \times 16$ mesh
    result of Figure~\ref{fig:disptestcase1} (left), side by side with
    error evaluations along the line B--B'' for a sequence of meshes of
    increasing density. The errors shown on the left thus correspond
    with the 3rd (red) curve on the right.}
  \label{fig:disperrtestcase1}
\end{figure}

Convergence of the WSM approximation under mesh refinement is further examined in Figure~\ref{fig:convtestcase1}, which
displays the $\LL^2$-norm and $\HH^1$-norm of the error for both linear ($p=1$) and quadratic ($p=2$) shape
functions. The curve marked `global' shows the error integrated over the entire
computational domain. The curve marked `local' shows the error integrated over
the domain excluding an $0.1$-neighborhood of the dislocation, corresponding to the dotted
area in Figure~\ref{fig:disperrtestcase1}. We observe that for both linear and quadratic approximations,
the global $\HH^1$-norm of the error diverges as $O(h^{-1/2})$ as $h\to{}0$, while
the global $\LL^2$-norm of the error converges as $O(h^{-1/2})$, independent of the order of approximation. 
This asymptotic behavior is
in accordance with the estimates in Corollaries~\ref{thm:CorH1global} and~\ref{thm:CorL2global}.
Figure~\ref{fig:convtestcase1} moreover corroborates that the local $\HH^1$-norm of the error converges
as $O(h^p)$ as $h\to0$, in agreement with the estimate in Corollary~\ref{thm:OptimalLocal}.
The local $ \LL^2 $-norm of the error, for which no theory was developed, also displays an optimal convergence rate of $O(h^{p+1})$ as $h\to{}0$.

\begin{figure}
  \centering
  \includegraphics{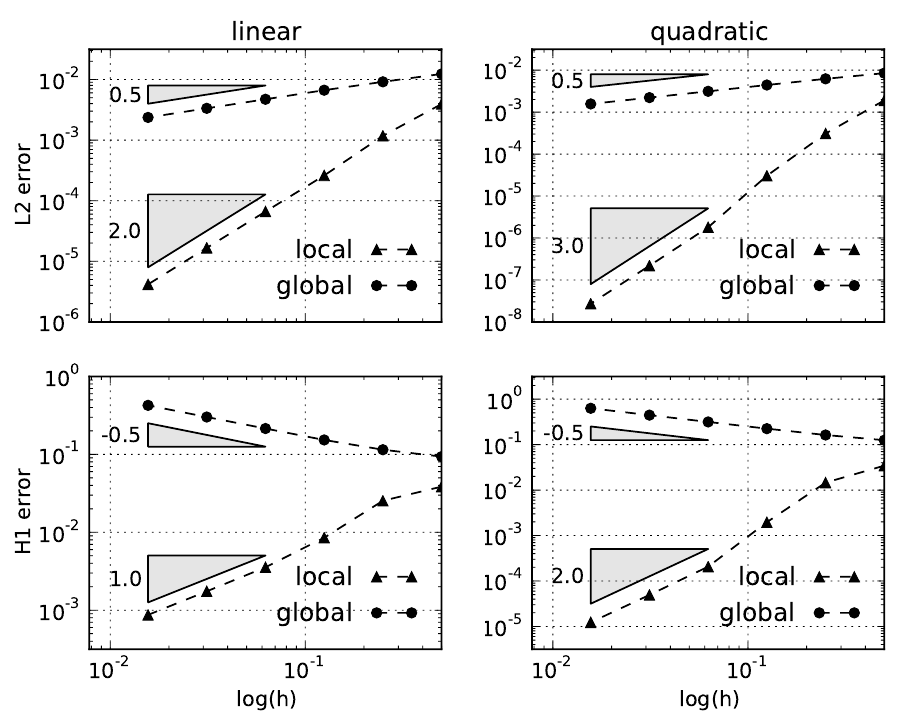}
  \caption{%
    Mesh convergence of WSM applied to test case I, showing the $\LL^2$-norm
  (top) and $\HH^1$-norm (bottom) of the error for linear (left) and
    quadratic (right) shape functions. The markers corresponds to mesh sequences of $\{4\times{}4,8\times8,\ldots,128\times128\}$
	elements are considered. The global error is computed by
    integration over the entire computational domain $ \Omega $, the
    local error by integration over the $>$0.1 distance exterior around
    $ \varkappa $, bounded by the dotted line in
    Figure~\ref{fig:disperrtestcase1}.}
  \label{fig:convtestcase1}
\end{figure}

\subsection{Test case II: 3D traction-free halfspace}

The second test case is a planar dislocation buried in a semi-infinite, three dimensional,
homogeneous, isotropic domain with a flat, traction-free surface.
The dislocation is a rectangular plane, the sides of which are displaced over a
constant distance in both strike and in dip direction, such that in geodetic
terms the setting is that of an oblique left-lateral thrust fault.
Figure~\ref{fig:testcase2} shows a schematic of the computational setup.
Analytical solutions to this problem have been derived by Okada
\cite{okada85,okada92}. We use homogeneous Lam\'e constants $ \lambda
= 1, \mu = 1 $ for all subsequent computations.

\begin{figure}
  \centering
  \includegraphics{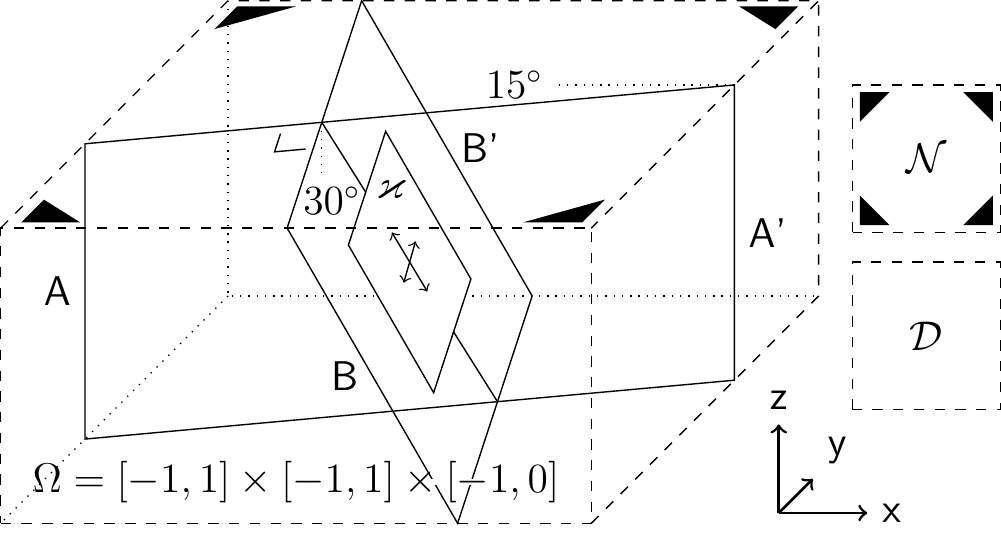}
  \caption{%
    Schematic overview of test case II. A $ 1 \times 3^{-1/2} $ sized
    dislocation plane is positioned at $ 15^\circ $ strike and $
    30^\circ $ dip, spanning a $0.25$--$0.75$ depth range. The surface is
    traction free. Dashed faces mark the computational domain, which
    is a truncation of the actual (semi-infinite) domain. The fault is
    represented by the B--B' plane, where at $ \varkappa $ the medium
    is dislocated by a constant $0.2$ displacement in strike direction
    and a constant $0.1$ displacement in dip direction. The perpendicular
    intersection plane A--A' serves visualization purposes only; see
    Figures~\ref{fig:disptestcase2} and~\ref{fig:disptestcase3}.}
  \label{fig:testcase2}
\end{figure}

\begin{figure}
  \includegraphics[width=\textwidth]{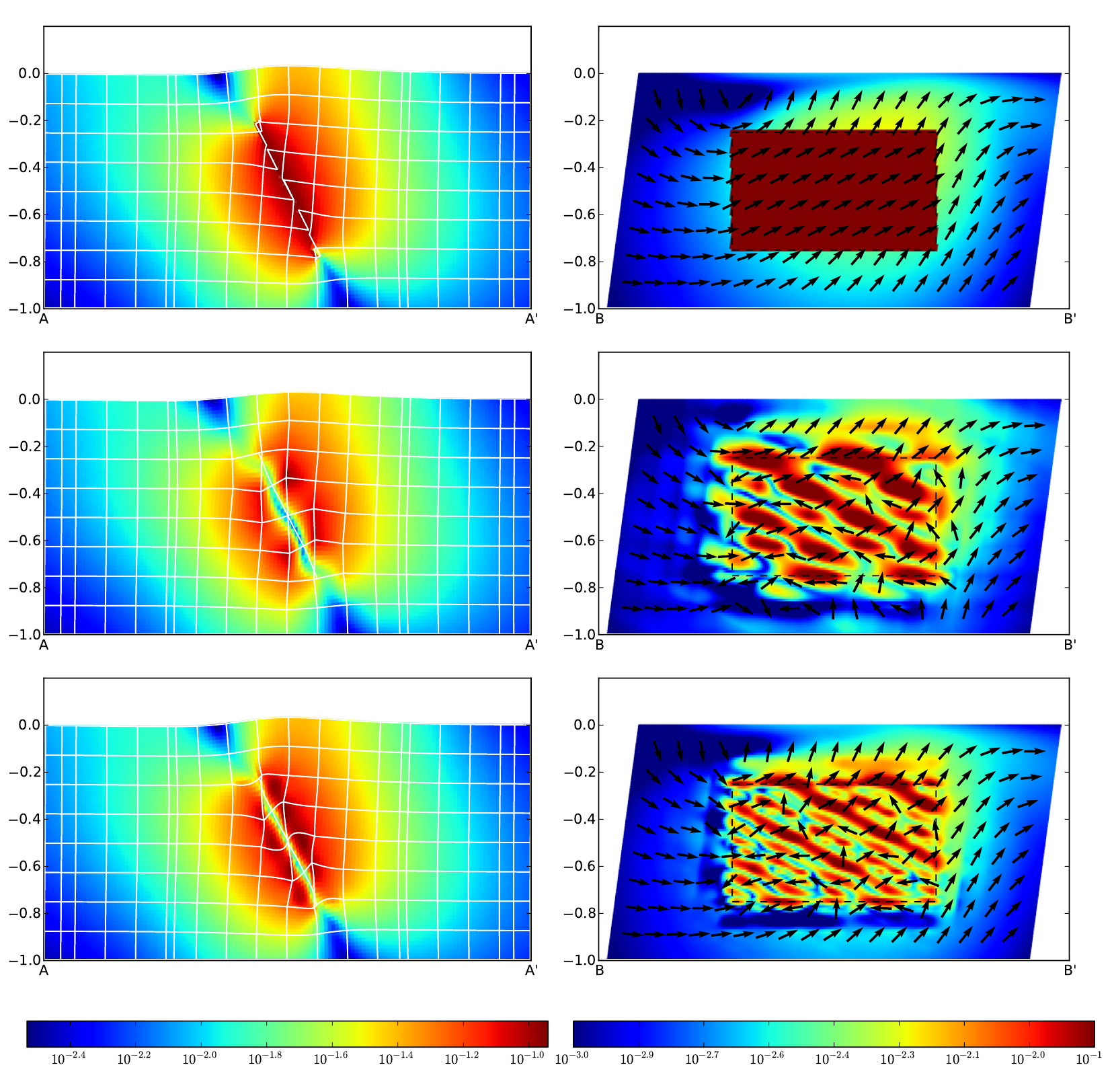}
  \caption{%
    Cross-section views of the displacement field of test case II,
    corresponding to the A-A' and B-B' intersection planes of
    Figure~\ref{fig:testcase2}: exact displacement as derived by
    Okada~\cite{okada92} (top) and WSM approximation on a $16 \times 16
    \times 8$ mesh with linear (middle) and quadratic shape
    functions (bottom). Colors indicate the displacement magnitude on a
    logarithmic scale analogous to Figure~\ref{fig:disptestcase1}.
    Arrows indicate direction only.}
  \label{fig:disptestcase2}
\end{figure}

Computations are performed on a truncated domain $ \Omega = [-1,1]
\times [-1,1] \times [-1,0] $, with Dirichlet conditions enforcing the
Okada solution at the five truncation planes.
Figure~\ref{fig:disptestcase2} shows displacements along the
intersection planes A-A' and B-B' indicated in
Figure~\ref{fig:testcase2}. The topmost figure shows the exact solution
according to Okada's equations. The middle and bottom figures display the
WSM approximation of the displacement on a $16 \times 16 \times 8$ mesh, for
linear and quadratic shape functions, respectively. The
displacement fields in the WSM approximations are seen to be continuous everywhere, though they
become highly irregular at the dislocation. At further distances
from the dislocation, however, the approximations exhibit very good agreement
with the exact solution, despite the coarseness of the considered mesh.

Figure~\ref{fig:convtestcase2} examines convergence of the global
and local norms of the error under mesh refinement. The results confirm the $O(h^{-1/2})$ divergence
and the $O(h^{1/2})$ convergence of the global $\HH^1$-norm and global $\LL^2$-norm,
respectively, in agreement with the estimates in Corollaries~\ref{thm:CorH1global} and~\ref{thm:CorL2global}.
Furthermore, we observe $O(h^{p})$ convergence of the local $\HH^1$-norm
in accordance with the estimate in Corollary~\ref{thm:OptimalLocal} and
$O(h^{p+1})$ convergence for the local $\LL^2$-norm. Figure~\ref{fig:convtestcase2}
moreover displays the $ \LL^2 $-norm of the displacement error at the surface,
which exhibits an optimal convergence rate of $O(h^{p+1})$. It is to be noted
that the agreement of the observed convergence rates for the global $\HH^1$-norm and $\LL^2$-norm 
and the local $\HH^1$-norm with the estimates in Section~\ref{sec:convergence}
is non-obvious, as a piecewise constant slip cannot be lifted
into $\HH_{0,\dirichlet}^1(\Omega\setminus\Gamma)$, and the test case under consideration therefore fails
to meet the conditions imposed in Section~\ref{sec:convergence}.
\begin{figure}
  \centering
  \includegraphics{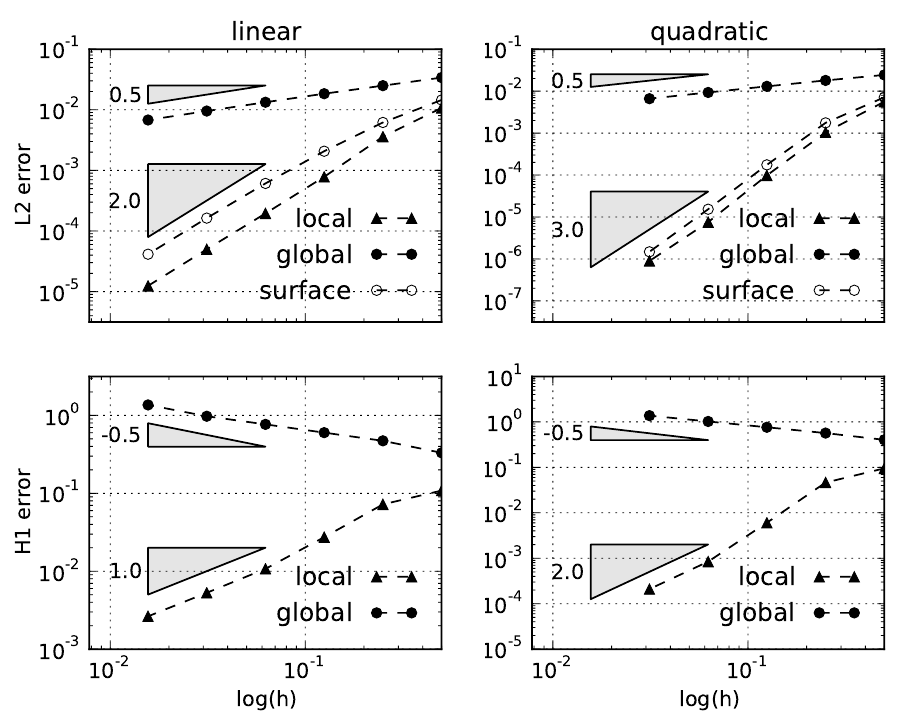}
  \caption{%
    Mesh convergence of WSM for test case II. The figures are
    analogous to Figure~\ref{fig:convtestcase1}, with the addition of
    the $\LL^2$-norm of the error in the displacement at the traction-free surface.}
  \label{fig:convtestcase2}
\end{figure}

\subsection{Test case III: 3D traction-free rupturing halfspace}

\begin{figure}
  \centering
  \includegraphics{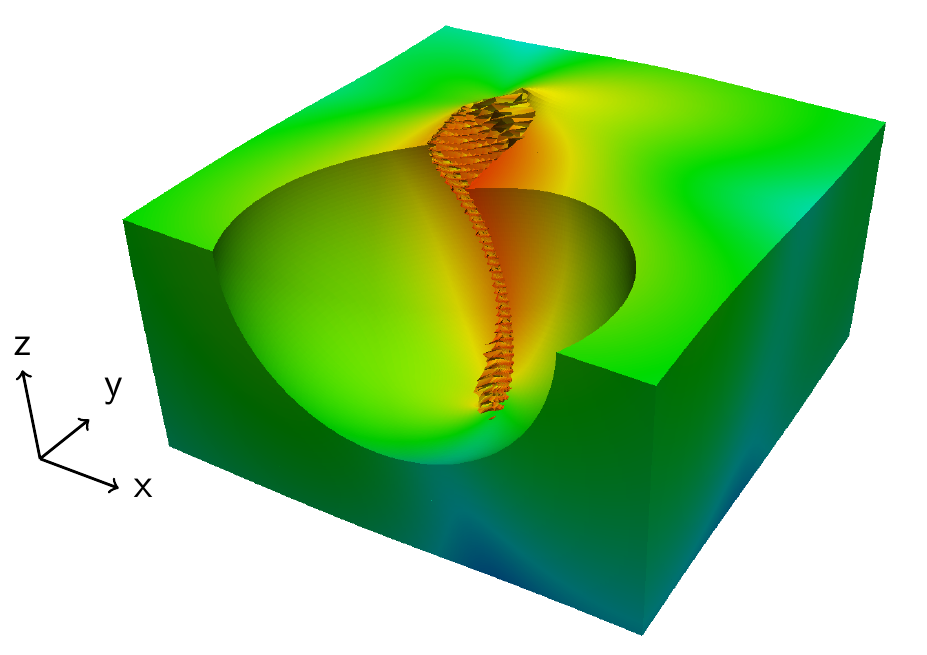}
  \caption{%
    Perspective projection of a WSM computation of test case III, using linear
    shape functions on a $128 \times 128 \times 64$ mesh. Deformations are
    amplified by a factor 2 for visualization purposes; colors represent displacement
    magnitude on a log scale. The orientation aligns roughly with that of
    Figure~\ref{fig:testcase2}. A spherical cut-out exposes part of the
    interior of the domain, revealing a zone of large displacements local to
    the dislocation.}
  \label{fig:rupture3d}
\end{figure}

The third test case is in everything equal to the second, except that
the dislocation is now extended in vertical direction to span a depth range
of 0--0.75, and the dip slip direction is reversed to form the
geodetic equivalent of a rupturing reverse fault; see
Figure~\ref{fig:rupture3d}. Figure~\ref{fig:disptestcase3} shows the
cross section displacements, and Figure~\ref{fig:convtestcase3} the
norms of the displacement-error under mesh refinement. We observe that
the global (resp. local) $\HH^1$-norm and $\LL^2$-norm of the error
are again proportional to~$h^{-1/2}$ and~$h^{1/2}$ (resp.~$h^p$ and~$h^{p+1}$), respectively.
In addition to the $\LL^2$-norm of
the error in the displacement field at the traction-free boundary, 
Figure~\ref{fig:convtestcase3} also presents the local $\LL^2$\nobreakdash-norm
of the error at the traction-free boundary, i.e., the error on the surface excluding an $0.1$-neighborhood of
the dislocation. 
The global $\LL^2$-norm of the surface error converges
with an asymptotic rate of $O( h^{1/2} )$. This suboptimal convergence behavior is
caused by the fact that in this case the discontinuity reaches the
surface. The local $\LL^2$-norm of the error at the surface again display optimal
convergence at a rate of $O(h^{p+1})$. It is to be noted that
the convergence results for the
global $\HH^1$-norm and $\LL^2$-norm and the local $\HH^1$-norm agree
with the estimates in Section~\ref{sec:convergence}, despite the fact that
the analysis in Section~\ref{sec:convergence} is restricted to non-rupturing
faults.

\begin{figure}
  \includegraphics[width=\textwidth]{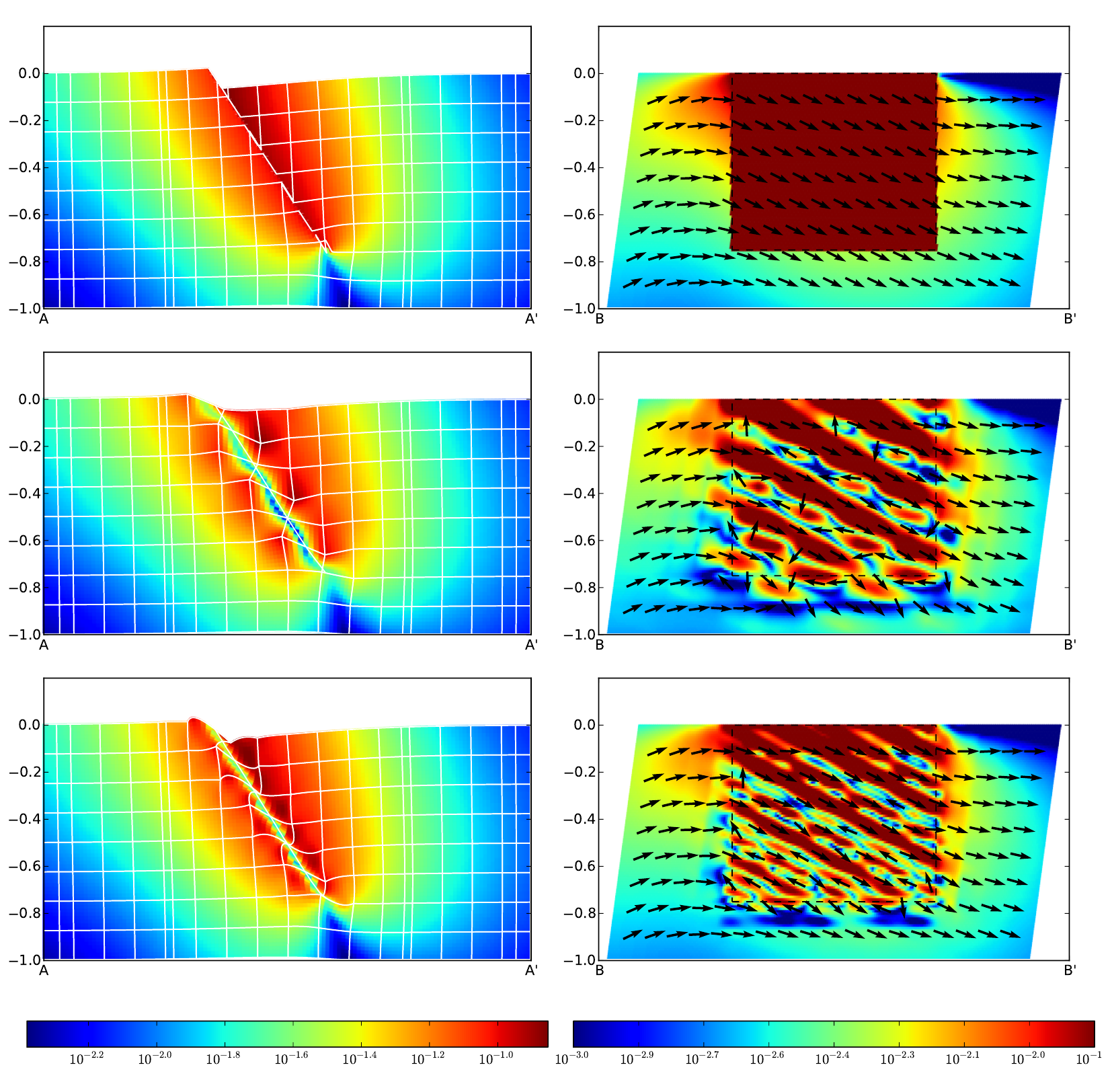}
  \caption{%
    Cross-section views of the displacement field of test case III. The figures
    are analogous to Figure~\ref{fig:convtestcase1}.}
  \label{fig:disptestcase3}
\end{figure}

\begin{figure}
  \centering
  \includegraphics{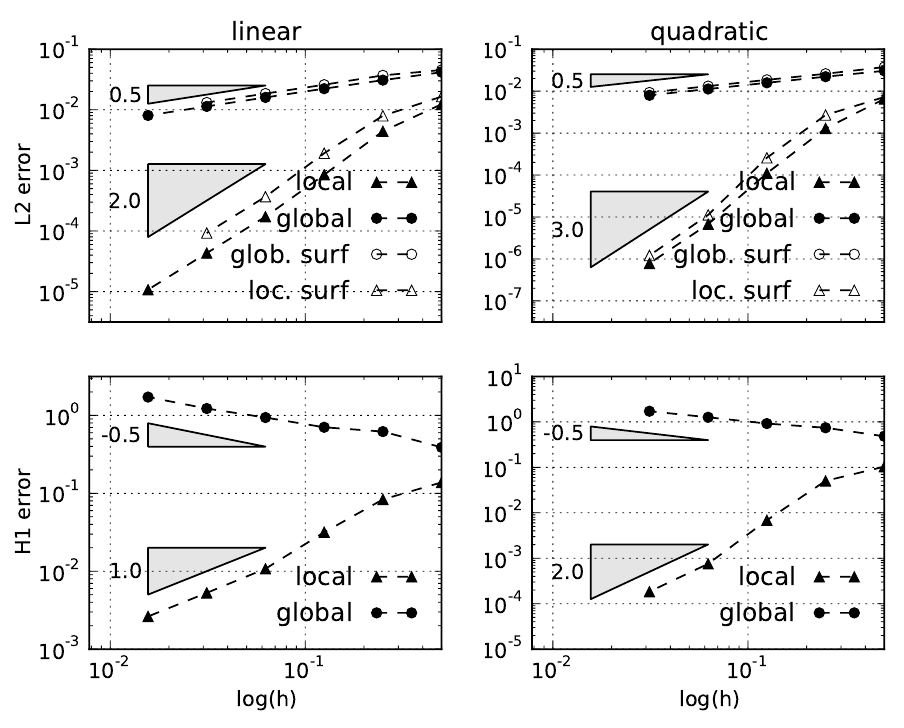}
  \caption{%
    Mesh convergence of WSM applied to test case III. The figures are
    analogous to Figure~\ref{fig:convtestcase2}, except that the $\LL^2$-norm
	of the error on the surface is now indicated by `global surface' and additionally a `local surface'
	$\LL^2$-norm of the error has been plotted, excluding an $0.1$-neighborhood of the dislocation.}
  \label{fig:convtestcase3}
\end{figure}

\section{Conclusions}
\label{sec:concl}

To solve Volterra's dislocation problem by standard finite-elements
techniques, the dislocation is required to coincide with
element edges. This requirement links the finite-element mesh with the
fault geometry, which prohibits the reuse of computational components 
in situations where multiple geometries have to be considered. In
particular, it renders the finite-element method infeasible in nonlinear inversion
problems.

To overcome the problems of standard finite-element techniques in nonlinear
inversion processes, in this paper we introduced the {\em Weakly-enforced Slip Method\/} (WSM),
a new finite-element approximation for Volterra's dislocation problem in which
the slip discontinuity is weakly imposed in the right-hand-side load functional.
Accordingly, the bilinear form in the formulation and, hence, the stiffness
matrix are independent of the fault geometry. The method is summarized by
the following weak formulation:
\begin{displaymath}
  u \in \VV_h:\quad a( u, v ) = -\int_\manifold \slip \cdot
  \mean{ \sigma_\nu( v ) } \quad \forall{}v\in{}\VV_h\,.
\end{displaymath}
The stiffness matrix depends on properties of the continuous
domain only, namely, variations in the elasticity and topology and geometry of the domain, 
and remains independent of fault geometry. Fault dependence manifests in the 
right-hand-side load functional only. The load functional is formed by integrating over the fault,
which is allowed to cut through elements. The integration along the fault is a non-standard
operation in finite-element methods, but we expect that it can be incorporated in most existing finite-element
toolkits with minor effort. We further note that no approximations are
required regarding fault geometry and slip distribution, unlike lift-based
methods, which require a parametrization for both. 

We established that as a consequence of the continuous approximation in WSM,
the dislocation is not resolved. Accordingly, the WSM approximation displays
suboptimal convergence in the $\LL^2$-norm under mesh refinement. In particular,
the $\LL^2$-norm of the error decays only as $O(h^{1/2})$ as the mesh width $h$ tends to~$0$,
independent of the order of approximation. Furthermore, the $\HH^1$-norm of the error 
generally diverges as $O(h^{-1/2})$ as $h\to{}0$, independent of the order of approximation.
In addition, we however proved that WSM has outstanding local approximation properties,
and that the method generally displays optimal convergence in the $\HH^1$-norm on
the domain excluding an arbitrarily small neighborhood of the dislocation. In particular,
for any $\varepsilon$-neighborhood $\varkappa^{\varepsilon}$ of the dislocation, 
the $\|\cdot\|_{1,\Omega\setminus\varkappa^{\varepsilon}}$ norm of the error in the WSM 
approximation generally converges as $O(h^p)$ as $h\to{}0$,
with~$p$ the polynomial degree of the finite-element space.

Numerical experiments in 2D and 3D were conducted to verify and scrutinize the approximation 
properties of WSM. The asymptotic error estimates for the global $\HH^1$-norm, the global $\LL^2$-norm
and the local $\HH^1$-norm of the error were confirmed in all cases, despite the fact 
that two of the test cases violate some of the conditions underlying the asymptotic error estimates.
In particular, the numerical results indicate that the error estimates extend to rupturing faults, where
the dislocation fissures the traction-free surface. The numerical experiments moreover
conveyed that the WSM approximation displays optimal local convergence in the $\LL^2$-norm,
i.e., the $\|\cdot\|_{\Omega\setminus\varkappa^{\varepsilon}}$-norm of the error decays
as $O(h^{p+1})$ as $h\to{}0$. For the non-rupturing-fault test case, the error in the surface
displacement converges optimally in the $\LL^2$-norm at $O(h^{p+1})$. For the rupturing-fault
test case, the $\LL^2$-norm of the error at the surface converges at $O(h^{1/2})$, while the
local $\LL^2$-norm excluding a small neighborhood of the dislocation again converges optimally
at $O(h^{p+1})$. Overall, the approximation obtained via WSM is very well behaved, and the
method proves remarkably robust.

Given the compelling properties of WSM one might be tempted to seek
application in other than our intended field of tectonophysics.
Dislocation plasticity comes to mind as one heavily relying on
superposition of elastic dislocations. For many problems, however, the
location of the fault will be known a-priori, in which case there is no
reason to avoid strong imposition. Moreover, often the internal stress
is an important quantity to be resolved, which with WSM suffers from
inaccuracies in regions close to the fault. In tectonophysics the
primary observable is the displacement of the free surface, which WSM is
very well capable of resolving.

We believe that WSM can play an important, if specific, role in the
application of tectonic fault plane inversions. Being able to precompute
the stiffness matrix and a quality preconditioner, for any fault geometry
and slip distribution under study, it remains only to integrate over the
2D manifold and solve the system. This makes it feasible to use finite
elements in a direct nonlinear inversion. In the hands of geophysicists,
this tool will allow all available in-situ knowledge to be made part of
the forward model. We hope this will help to improve the accuracy of
future co-seismic analyses.

\bibliographystyle{plain}
\bibliography{literature}

\end{document}